\newtheorem{thm}{Theorem}[subsection]
\newtheorem{lem}[thm]{Lemma}
\newtheorem{prop}[thm]{Proposition}
\newtheorem{cor}[thm]{Corollary}
\theoremstyle{definition}
\newtheorem{rem}[thm]{Remark}
\newtheorem{ex}[thm]{Example}
\numberwithin{equation}{section}
\author{Hideya Watanabe}
\address{(H. Watanabe) Osaka Central Advanced Mathematical Institute, Osaka Metropolitan University, Osaka, 558-8585, Japan}
\email{watanabehideya@gmail.com}
\subjclass[2020]{Primary~17B37; Secondary~17B10}
\date{\today}
\title{Stability of $\imath$canonical bases of irreducible finite type of real rank one}
\begin{document}
\maketitle

\begin{abstract}
  It has been known since their birth in Bao and Wang's work that the $\imath$canonical bases of $\imath$quantum groups are not stable in general.
  In the author's previous work, the stability of $\imath$canonical bases of certain quasi-split types turned out to be closely related to the theory of $\imath$crystals.
  In this paper, we prove the stability of $\imath$canonical bases of irreducible finite type of real rank $1$, for which the theory of $\imath$crystals has not been developed, by means of global and local crystal bases.
\end{abstract}

\section{introduction}
Let $A = (a_{i,j})_{i,j \in I}$ be a symmetrizable generalized Cartan matrix, $\mathfrak{g}$ the associated Kac-Moody algebra, and $\mathbf{U} = U_q(\mathfrak{g})$ the Drinfeld and Jimbo's quantum group with weight lattice $X$.
Let $X^+$ denote the set of dominant weights.
For each $\lambda \in X^+$, let $V(\lambda)$ (resp., $V(-\lambda)$) denote the irreducible integrable highest (resp., lowest) weight $\mathbf{U}$-module of highest weight $\lambda$ (resp., lowest weight $-\lambda$) with highest weight vector $v_\lambda$ (resp., lowest weight vector $v_{-\lambda}$).
The canonical bases (also known as global crystal bases) of the negative part $\mathbf{U}^-$ and the positive part $\mathbf{U}^+$ of $\mathbf{U}$, and of $V(\pm \lambda)$ for all $\lambda \in X^+$, were constructed for type ADE in \cite{Lus90} and for general in \cite{Lus91} geometrically and in \cite{Kas91} algebraically.

In \cite{Lus92}, Lusztig constructed the canonical basis of the tensor product $V(-\lambda) \otimes V(\mu)$ for arbitrary $\lambda,\mu \in X^+$, from the canonical bases of $V(-\lambda)$ and $V(\mu)$.
A key ingredient of his construction is the quasi-$R$-matrix, which intertwines the bar-involutions on $\mathbf{U} \otimes \mathbf{U}$ and $\mathbf{U}$.

The canonical bases thus constructed are stable in the following sense.
For each $\lambda,\mu,\nu \in X^+$, there exists a unique $\mathbf{U}$-module homomorphism
$$
\pi_{\lambda,\mu,\nu}: V(-\lambda-\nu) \otimes V(\mu+\nu) \rightarrow V(-\lambda) \otimes V(\mu)
$$
which sends $v_{-\lambda-\nu} \otimes v_{\mu+\nu}$ to $v_{-\lambda} \otimes v_\mu$.
Then, each canonical basis element of $V(-\lambda-\nu) \otimes V(\mu+\nu)$ is sent to either a canonical basis element of $V(-\lambda) \otimes V(\mu)$ or $0$, and the kernel of $\pi_{\lambda,\mu,\nu}$ is spanned by a subset of the canonical basis.
In other words, the homomorphism $\pi_{\lambda,\mu,\nu}$ is a based $\mathbf{U}$-module homomorphism.

From this stability property, we see that for each $\zeta \in X$, the family
\begin{align}
  \{ V(-\lambda) \otimes V(\mu) \}_{\substack{\lambda,\mu \in X^+ \\ \mu-\lambda = \zeta}} \label{eq: proj system for Udot}
\end{align}
of $\mathbf{U}$-modules with $\pi_{\lambda,\mu,\nu}$ for each $\lambda,\mu,\nu \in X^+$ with $\mu-\lambda = \zeta$, forms a projective system of based $\mathbf{U}$-modules.
Then, the subspace $\dot{\mathbf{U}} 1_\zeta$ of the modified quantum group $\dot{\mathbf{U}} = \bigoplus_{\zeta \in X} \dot{\mathbf{U}} 1_\zeta$ with canonical basis can be regarded as the projective limit of the projective system above in a certain category of based $\mathbf{U}$-modules.
This construction led to an explicit description of the crystal basis of modified quantum group in \cite{Kas94}.

Our main interest in the present paper is the $\imath$quantum group counterpart of the construction above.
The $\imath$quantum group (also known as the quantum symmetric pair coideal subalgebra) $\mathbf{U}^\imath$ associated with an admissible pair $(I_\bullet, \tau)$ (in the sense of \cite[Definition 2.3]{Kol14}) and parameters $\varsigma_i \in \mathbb{Q}(q)^\times$ and $\kappa_i \in \mathbb{Q}(q)$ for $i \in I \setminus I_\bullet$ is a certain right coideal subalgebra of $\mathbf{U}$ which forms a quantum symmetric pair $(\mathbf{U}, \mathbf{U}^\imath)$.
For each Kac-Moody algebra $\mathfrak{g}$, there exists a quantum symmetric pair $(\mathbf{U}, \mathbf{U}^\imath) = (U_q(\mathfrak{g} \oplus \mathfrak{g}), U_q(\mathfrak{g}))$.
Such a quantum symmetric pair is said to be of diagonal type.
Therefore, the quantum group $\mathbf{U}$ itself is an instance of $\imath$quantum groups.

Let $w_\bullet$ denote the longest element of the Weyl group associated with $I_\bullet$ (by the definition of admissible pairs, $I_\bullet$ is of finite type).
Bao and Wang constructed a $\mathbf{U}^\imath$-module $V(w_\bullet\lambda, \mu)$ for each $\lambda,\mu \in X^+$ with a distinguished basis, called the $\imath$canonical basis, and $\mathbf{U}^\imath$-module homomorphisms
$$
\pi^\imath_{\lambda,\mu,\nu}: V(w_\bullet(\lambda+\tau\nu), \mu+\nu) \rightarrow V(w_\bullet\lambda,\mu)
$$
for finite type in \cite{BW18} and for the general case in \cite{BW21}.
To be more precise, the $\mathbf{U}^\imath$-module $V(w_\bullet\lambda, \mu)$ is obtained by restriction from the $\mathbf{U}$-submodule of $V(\lambda) \otimes V(\mu)$ generated by $v_{w_\bullet\lambda} \otimes v_\mu$, where $v_{w_\bullet\lambda} \in V(\lambda)$ denotes the unique canonical basis element of weight $w_\bullet\lambda$.
And, the $\mathbf{U}^\imath$-module homomorphism $\pi^\imath_{\lambda,\mu,\nu}$ is the unique one which sends $v_{w_\bullet(\lambda+\tau\nu)} \otimes v_{\mu+\nu}$ to $v_{w_\bullet\lambda} \otimes v_\mu$.
A key ingredient of their construction of the $\imath$canonical bases is the quasi-$K$-matrix, which intertwines the bar-involutions on $\mathbf{U}$ and $\mathbf{U}^\imath$.
The existence of quasi-$K$-matrix and bar-involution on $\mathbf{U}^\imath$ in general was formulated in \cite{BW18a} and proved in \cite{Kol21} after many partial results (see Section $1$ of {\it loc. cit.}).

Set $X^\imath := X/\{ \lambda+w_\bullet\tau\lambda \mid \lambda \in X \}$, and let $\overline{\cdot}: X \rightarrow X^\imath$ denote the quotient map.
Then, for each $\zeta \in X^\imath$, we obtain a projective system
\begin{align}
  \{ V(w_\bullet\lambda, \mu) \}_{\substack{\lambda,\mu \in X^+ \\ \overline{\mu+w_\bullet\lambda} = \zeta}} \label{eq: proj system for Uidot}
\end{align}
of $\mathbf{U}^\imath$-modules.
This projective system can be seen as a natural generalization of Lusztig's one \eqref{eq: proj system for Udot}.
In fact, they coincide with each other when the quantum symmetric pair is of diagonal type since the quasi-$K$-matrix and the $\mathbf{U}^\imath$-module $V(w_\bullet\lambda,\mu)$ become the quasi-$R$-matrix and $V(-\lambda) \otimes V(\mu)$, respectively.
However, in contrast to the projective system \eqref{eq: proj system for Udot}, the $\imath$canonical bases are not stable in the projective system \eqref{eq: proj system for Uidot}.
They are merely asymptotically stable; nevertheless this weaker stability property could still lead to the canonical basis (i.e., the $\imath$canonical basis) of the modified $\imath$quantum group in \cite{BW18}, \cite{BW21}.

On the other hand, in \cite{W21b}, the author proved that the $\imath$canonical bases are stable in the projective system \eqref{eq: proj system for Uidot} when $I_\bullet$ is empty, $a_{i,\tau(i)} \in \{2,0,-1\}$ for all $i \in I$, and when the parameters $\varsigma_i,\kappa_i$ are chosen appropriately.
As a result, he interprets the subspace $\dot{\mathbf{U}}^\imath 1_\zeta$ of the modified $\imath$quantum group $\dot{\mathbf{U}}^\imath = \bigoplus_{\zeta \in X^\imath} \dot{\mathbf{U}}^\imath 1_\zeta$ with $\imath$canonical basis as the projective limit of \eqref{eq: proj system for Uidot} in a certain category of based $\mathbf{U}^\imath$-modules.
In the proof, the theory of $\imath$crystals developed in \cite{W21a} plays a crucial role.

It is natural to expect that one can prove the stability of $\imath$canonical bases for general quantum symmetric pairs by developing the theory of $\imath$crystals.
The theory of $\imath$crystals in \cite{W21b} is based on many explicit calculation involving the quantum symmetric pairs of real rank $1$, just like the theory of crystals is based on calculation involving the quantum group of rank $1$.
Here, the real rank of a quantum symmetric pair refers the number of $\tau$-orbits in $I \setminus I_\bullet$.
The quantum group in a quantum symmetric pair of real rank $1$ considered in \cite{W21b} is either of type $A_1$, $A_1 \times A_1$, or $A_2$.
Hence, its structure is relatively simple.
In general, the quantum group in a quantum symmetric pair of real rank $1$ is not of finite type.
Even if we restrict our attention to a quantum symmetric pair of finite classical type, its rank can be arbitrarily high.
Hence, the same strategy as \cite{W21b} is not applicable for general quantum symmetric pairs.

In the present paper, we prove the stability of $\imath$canonical bases for the quantum symmetric pair of irreducible finite type of real rank $1$ without developing the theory of $\imath$crystals.
Here, ``irreducible'' means that the Dynkin diagram $I$, extended by adding edges between $i$ and $\tau(i)$ for all $i \in I$, is connected as a graph.
This is the first step toward the generalization of the stability theorem of $\imath$canonical bases to general quantum symmetric pairs.
Since the stability of $\imath$canonical bases is closely related to $\imath$crystals, the author expects that our new stability theorem, in turn, stimulates an attempt to extend the theory of $\imath$crystals to general quantum symmetric pairs.

Let us summarize our proof of the stability of $\imath$canonical bases.
First, we study the $\mathbf{U}$-module structure of $V(w_\bullet\lambda,\mu)$ by investigating the crystal structure of its crystal basis.
This enables us to construct a $\mathbf{U}$-module homomorphism
\begin{align}
  V(w_\bullet(\lambda+\tau\nu), \mu+\nu) \rightarrow V(\nu+w_\bullet\tau\nu) \otimes V(w_\bullet\lambda, \mu). \label{eq: key based U-hom}
\end{align}
Note that $\nu+w_\bullet\tau\nu$ is dominant.
Next, verifying a sufficient condition for a $\mathbf{U}$-module homomorphism to be based, which is given in Proposition \ref{prop: suff cond to be based hom}, we prove that the $\mathbf{U}$-module homomorphism above is based.
Finally, we prove that there exists a based $\mathbf{U}^\imath$-module homomorphism $V(\nu+w_\bullet\tau\nu) \rightarrow \mathbb{Q}(q)$ (here, $\mathbb{Q}(q)$ is the trivial $\mathbf{U}^\imath$-module with $\imath$canonical basis $\{1\}$) which sends the highest weight vector to $1$.
Composing this based $\mathbf{U}^\imath$-module homomorphism with the based $\mathbf{U}$-module homomorphism in the second step, we obtain a based $\mathbf{U}^\imath$-module homomorphism, which is identical to $\pi^\imath_{\lambda,\mu,\nu}$.
The first and second steps are applicable for general quantum symmetric pairs.
The final step is achieved by studying the quantum symmetric pairs of irreducible finite type of real rank $1$ one-by-one; there are only eight kinds of such quantum symmetric pairs.

The paper is organized as follows.
In Section \ref{sec: qua grp}, we review well-known results concerning canonical, or global crystal, and crystal bases, and based $\mathbf{U}$-modules.
Then, we give a sufficient condition for two based $\mathbf{U}$-modules with a crystal morphism between their crystal bases having a based $\mathbf{U}$-module homomorphism which lifts the crystal morphism in Proposition \ref{prop: suff cond to be based hom}.
In Section \ref{sec: iqua grp}, after recalling the definition of $\imath$quantum groups and results of \cite{BW21} about based $\mathbf{U}^\imath$-modules, we construct the based $\mathbf{U}$-module homomorphism \eqref{eq: key based U-hom}.
In Section \ref{sec: main results}, we finish our proof of the stability of $\imath$canonical bases by studying certain $\mathbf{U}^\imath$-modules for each quantum symmetric pair of irreducible finite type of real rank $1$ one-by-one.

\subsection*{Acknowledgement}
This work was supported by JSPS KAKENHI Grant Numbers JP20K14286 and JP21J00013.

\section{Quantum Group}\label{sec: qua grp}
The purpose of this section is to fix our notation about the quantum groups, and then prove Proposition \ref{prop: suff cond to be based hom}, which provides a sufficient condition for a crystal morphism between the crystal bases of two based $\mathbf{U}$-modules to be lifted to a based $\mathbf{U}$-module homomorphism.

\subsection{Quantum group}\label{subsec: qua grp}
Let $A = (a_{i,j})_{i,j \in I}$ be a symmetrizable generalized Cartan matrix with a symmetrizing matrix $D = \operatorname{diag}(d_i \mid i \in I)$ with $d_i \in \mathbb{Z}_{> 0}$ being relatively prime.
We often identify $I$ with the Dynkin diagram of $A$.
Let $Y$ and $X$ be finitely generated free abelian groups with a perfect pairing $\langle,\rangle: Y \times X \rightarrow \mathbb{Z}$.
Let $\Pi^\vee = \{ h_i \mid i \in I \} \subset Y$ and $\Pi = \{ \alpha_i \mid i \in I \} \subset X$ be linearly independent subsets satisfying
$$
\langle h_i, \alpha_j \rangle = a_{i,j}
$$
for all $i,j \in I$.
Let $W$ denote the Weyl group associated with the generalized Cartan matrix $A$.
For each $i \in I$, let $s_i \in W$ denote the simple reflection.

Let $q$ be an indeterminate.
For each $i \in I$, $n \in \mathbb{Z}$, and $m \in \mathbb{Z}_{\geq 0}$, set
$$
q_i := q^{d_i},\ [n]_i := \frac{q_i^n-q_i^{-n}}{q_i-q_i^{-1}},\ [m]_i! := \prod_{n=1}^m [n]_i.
$$
When $d_i = 1$, we sometimes omit the subscript ``$i$'' from notation above.

Let $\mathbf{U}$ denote the quantum group.
Namely, $\mathbf{U}$ is the unital associative $\mathbb{Q}(q)$-algebra with generators $\{ E_i,F_i,K_h \mid i \in I,\ h \in Y \}$ subject to the following relations: For each $i,j \in I$ and $h,h_1,h_2 \in Y$,
\begin{align*}
  &K_0 = 1,\ K_{h_1} K_{h_2} = K_{h_1 + h_2}, \\
  &K_h E_i = q^{\langle h,\alpha_i \rangle} E_i K_h,\ K_h F_i = q^{\langle h, -\alpha_i \rangle} F_i K_h, \\
  &E_i F_j - F_j E_i = \delta_{i,j} \frac{K_i - K_i^{-1}}{q_i-q_i^{-1}}, \\
  &S_{i,j}(E_i, E_j) = S_{i,j}(F_i, F_j) = 0\ \text{ if } i \neq j,
\end{align*}
where
$$
K_i := K_{d_i h_i},\ S_{i,j}(x,y) := \sum_{r+s = 1-a_{i,j}} (-1)^s \frac{1}{[r]_i![s]_i!} x^r y x^s.
$$
The $\mathbf{U}$ is a Hopf algebra with comultiplication map $\Delta$ given by
\begin{align*}
  &\Delta(E_i) = E_i \otimes 1 + K_i \otimes E_i, \\
  &\Delta(F_i) = 1 \otimes F_i + F_i \otimes K_i^{-1}, \\
  &\Delta(K_h) = K_h \otimes K_h
\end{align*}
for all $i \in I$, $h \in Y$.

Let $\overline{\cdot}$ denote the bar-involution on $\mathbf{U}$, i.e., the $\mathbb{Q}$-algebra automorphism such that
$$
\overline{E_i} = E_i, \ \overline{F_i} = F_i,\ \overline{K_h} = K_{-h},\ \bar{q} = q^{-1}
$$
for all $i \in I$, $h \in Y$.

Let $\rho$ denote the anti-algebra automorphism on $\mathbf{U}$ such that
\begin{align}
  \rho(E_i) = q_i^{-1} F_iK_i,\ \rho(F_i) = q_iK_i^{-1} E_i,\ \rho(K_h) = K_h \label{eq: def of rho}
\end{align}
for all $i \in I$, $h \in Y$.

For each $i \in I$, let $T_i$ denote both the algebra automorphism $T''_{i,1}$ on $\mathbf{U}$ in \cite[Proposition 37.1.2]{Lus10} and the automorphism $T''_{i,1}$ on integrable $\mathbf{U}$-modules in \cite[5.2.1]{Lus10}.
For each $w \in W$ with a reduced expression $w = s_{i_1} \cdots s_{i_r}$, set $T_w := T_{i_1} \cdots T_{i_r}$.

Let $\mathbf{U}^+$ (resp., $\mathbf{U}^-$) denote the subalgebra of $\mathbf{U}$ generated by $\{ E_i \mid i \in I \}$ (resp., $\{ F_i \mid i \in I \}$).
Let $(\mathcal{L}(\pm\infty), \mathcal{B}(\pm\infty))$ and $\mathbf{B}(\pm\infty)$ denote the crystal base and global crystal basis of $\mathbf{U}^\mp$ in \cite[Theorems 4 and 6]{Kas91}.
In this paper, crystal lattices are considered over the subring $\mathbf{A}_\infty$ of $\mathbb{Q}(q)$ consisting of all rational functions regular at $q = \infty$.
Let $G_{\pm\infty}: \mathcal{B}(\pm\infty) \rightarrow \mathbf{B}(\pm\infty)$ denote the bijection such that $G_{\pm\infty}(b) + q^{-1}\mathcal{L}(\pm\infty) = b$ for all $b \in \mathcal{B}(\pm\infty)$.
Set $b_{\pm\infty} := 1 + q^{-1} \mathcal{L}(\pm\infty) \in \mathcal{B}(\pm\infty)$.


\subsection{Crystal}\label{subsec: crystal}
Let $\mathcal{B}$ be a crystal in the sense of \cite[Section 1.2]{Kas93}.
For each $\lambda \in X$, set
$$
\mathcal{B}_\lambda := \{ b \in \mathcal{B} \mid \operatorname{wt}(b) = \lambda \}.
$$
We say that $b \in \mathcal{B}_\lambda$ is a highest weight element of weight $\lambda$ if $\tilde{E}_i b = 0$ for all $i \in I$.
Let $\mathcal{B}^{\text{hi}} \subset \mathcal{B}$ denote the set of highest weight elements.

For each $b \in \mathcal{B}$, let $C(b)$ denote the connected component of $\mathcal{B}$ containing $b$.


Given two crystals $\mathcal{B}_1, \mathcal{B}_2$, their tensor product $\mathcal{B}_1 \otimes \mathcal{B}_2$ is the crystal with the following structure:
\begin{align}
\begin{split}
  &\operatorname{wt}(b_1 \otimes b_2) = \operatorname{wt}(b_1) + \operatorname{wt}(b_2), \\
  &\varepsilon_i(b_1 \otimes b_2) = \max(\varepsilon_i(b_1) - \langle h_i,\operatorname{wt}(b_2) \rangle, \varepsilon_i(b_2)), \\
  &\varphi_i(b_1 \otimes b_2) = \max(\varphi_i(b_1), \varphi_i(b_2) + \langle h_i, \operatorname{wt}(b_1) \rangle), \\
  &\tilde{E}_i(b_1 \otimes b_2) = \begin{cases}
    \tilde{E}_i b_1 \otimes b_2 & \text{ if } \varepsilon_i(b_1) > \varphi_i(b_2), \\
    b_1 \otimes \tilde{E}_i b_2 & \text{ if } \varepsilon_i(b_1) \leq \varphi_i(b_2),
  \end{cases} \\
  &\tilde{F}_i(b_1 \otimes b_2) = \begin{cases}
    b_1 \otimes \tilde{F}_i b_2 & \text{ if } \varepsilon_i(b_1) < \varphi_i(b_2), \\
    \tilde{F}_i b_1 \otimes b_2 & \text{ if } \varepsilon_i(b_1) \geq \varphi_i(b_2).
  \end{cases}
\end{split} \label{eq: tensor crystal}
\end{align}
Note that this structure is different from that in \cite{Kas93} due to difference of convention.

The following lemma is easily deduced from the above.

\begin{lem}\label{lem: hw element in tensor crystal}
Let $\mathcal{B}_1,\mathcal{B}_2$ be crystals such that
$$
\varepsilon_i(b) = \max\{ k \in \mathbb{Z}_{\geq 0} \mid \tilde{E}_i^k b \neq 0 \},\ \varphi_i(b) = \max\{ k \in \mathbb{Z}_{\geq 0} \mid \tilde{F}_i^k b \neq 0 \}
$$
for all $i \in I$, $b \in \mathcal{B}_1, \mathcal{B}_2$.
Let $(b_1, b_2) \in \mathcal{B}_1 \times \mathcal{B}_2$.
Then, we have $b_1 \otimes b_2 \in (\mathcal{B}_1 \otimes \mathcal{B}_2)^{\text{hi}}$ if and only if $b_2 \in \mathcal{B}_2^{\text{hi}}$ and $\varepsilon_i(b_1) \leq \langle h_i, \operatorname{wt}(b_2) \rangle$ for all $i \in I$.
\end{lem}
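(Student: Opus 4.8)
The plan is to test the condition $\tilde{E}_i(b_1\otimes b_2)=0$ separately for each $i\in I$, using the explicit rule for $\tilde{E}_i$ on a tensor product from \eqref{eq: tensor crystal}, and then to take the conjunction over all $i$.

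First I would fix $i\in I$ and distinguish the two cases in the rule for $\tilde{E}_i(b_1\otimes b_2)$. Note that the standing hypothesis forces $\varphi_i(b_2)\geq 0$ (the value $k=0$ always occurs, since $\tilde{F}_i^0 b_2=b_2\neq 0$), and likewise $\varepsilon_i(b_1)\geq 0$. If $\varepsilon_i(b_1)>\varphi_i(b_2)$, then $\tilde{E}_i(b_1\otimes b_2)=\tilde{E}_i b_1\otimes b_2$; here $\varepsilon_i(b_1)>\varphi_i(b_2)\geq 0$, so $\varepsilon_i(b_1)\geq 1$, and the hypothesis $\varepsilon_i(b_1)=\max\{k\geq 0\mid \tilde{E}_i^k b_1\neq 0\}$ then gives $\tilde{E}_i b_1\neq 0$, whence $\tilde{E}_i(b_1\otimes b_2)\neq 0$. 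If instead $\varepsilon_i(b_1)\leq\varphi_i(b_2)$, then $\tilde{E}_i(b_1\otimes b_2)=b_1\otimes\tilde{E}_i b_2$, which is $0$ if and only if $\tilde{E}_i b_2=0$, i.e.\ (again by the hypothesis) if and only if $\varepsilon_i(b_2)=0$. Combining the two cases, $\tilde{E}_i(b_1\otimes b_2)=0$ if and only if $\varepsilon_i(b_1)\leq\varphi_i(b_2)$ and $\varepsilon_i(b_2)=0$.

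Next I would remove $\varphi_i(b_2)$ from this condition in favor of the weight. The crystal axioms of \cite[Section 1.2]{Kas93} give $\varphi_i(b_2)=\varepsilon_i(b_2)+\langle h_i,\operatorname{wt}(b_2)\rangle$, so under the condition $\varepsilon_i(b_2)=0$ one has $\varphi_i(b_2)=\langle h_i,\operatorname{wt}(b_2)\rangle$. Therefore $\tilde{E}_i(b_1\otimes b_2)=0$ if and only if $\varepsilon_i(b_2)=0$ and $\varepsilon_i(b_1)\leq\langle h_i,\operatorname{wt}(b_2)\rangle$. Taking the conjunction over all $i\in I$ and observing that, by the hypothesis on $\varepsilon_i$, the conditions ``$\varepsilon_i(b_2)=0$ for all $i\in I$'' and ``$b_2\in\mathcal{B}_2^{\text{hi}}$'' are equivalent, we obtain that $b_1\otimes b_2\in(\mathcal{B}_1\otimes\mathcal{B}_2)^{\text{hi}}$ if and only if $b_2\in\mathcal{B}_2^{\text{hi}}$ and $\varepsilon_i(b_1)\leq\langle h_i,\operatorname{wt}(b_2)\rangle$ for all $i\in I$.

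The argument is purely formal and I do not expect any genuine obstacle; it is essentially a bookkeeping of the tensor product rule. The one point worth attention is that the proof really does use the standing hypothesis relating $\varepsilon_i,\varphi_i$ to the Kashiwara operators — it is invoked both to rule out the first case and to pass between $\tilde{E}_i b_2=0$ and $\varepsilon_i(b_2)=0$ — so I would make clear that the statement need not hold for crystals in which $\varepsilon_i$ is allowed to take the value $-\infty$.
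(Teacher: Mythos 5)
Your proof is correct, and it is exactly the routine bookkeeping the paper regards as immediate (the paper supplies no proof, merely noting the lemma is ``easily deduced from the above''). The case split on the tensor-product rule for $\tilde{E}_i$, the use of the standing hypothesis to pass between $\tilde{E}_i b = 0$ and $\varepsilon_i(b) = 0$, and the crystal axiom $\varphi_i(b_2) = \varepsilon_i(b_2) + \langle h_i, \operatorname{wt}(b_2) \rangle$ are precisely the intended ingredients, and your closing remark that the hypothesis excludes $\varepsilon_i = -\infty$ correctly identifies where it is genuinely used.
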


\subsection{Irreducible module $V(\pm \lambda)$}\label{subsec: V(pm lm)}
Let $X^+$ denote the set of dominant weights:
$$
X^+ = \{ \lambda \in X \mid \langle h_i,\lambda \rangle \geq 0\ \text{ for all } i \in I \}.
$$
For each $\lambda \in X^+$, let $V(\lambda)$ (resp., $V(-\lambda)$) denote the irreducible integrable highest (resp., lowest) weight $\mathbf{U}$-module of highest weight $\lambda$ (resp., lowest weight $-\lambda$).
Let $(\mathcal{L}(\pm\lambda), \mathcal{B}(\pm\lambda))$ and $\mathbf{B}(\pm\lambda)$ denote the crystal base and global crystal basis of $V(\pm\lambda)$ \cite[Theorems 2 and 6]{Kas91}.
For $w \in W$, let $b_{\pm w\lambda} \in \mathcal{B}(\pm\lambda)$ and $v_{\pm w\lambda} \in \mathbf{B}(\pm\lambda)$ denote the unique elements of weight $\pm w \lambda$.

\begin{rem}\normalfont\label{rem: finite type}
  Suppose that the Dynkin diagram $I$ is of finite type.
  Let $w_0 \in W$ denote the longest element.
  Then, for each $\lambda \in X^+$ and $w \in W$, the symbol $v_{-w\lambda}$ represents both the vectors $v_{-w\lambda} \in V(-\lambda)$ and $v_{ww_0(-w_0\lambda)} \in V(-w_0 \lambda)$.
  In order to make our notation consistent, we identify $V(-\lambda)$ with $V(-w_0 \lambda)$ under the $\mathbf{U}$-module isomorphism $V(-\lambda) \rightarrow V(-w_0 \lambda)$ which sends $v_{-\lambda}$ to $v_{w_0(-w_0\lambda)}$.
\end{rem}

Let $(,)$ denote the inner product on $V(\lambda)$ such that $(v_\lambda,v_\lambda) = 1$ and
$$
(xu,v) = (u,\rho(x)v)
$$
for all $x \in \mathbf{U}$, $u,v \in V(\lambda)$, where $\rho$ is the anti-algebra automorphism on $\mathbf{U}$ given in \eqref{eq: def of rho}.
By \cite[Lemma 19.1.4]{Lus10}, we have
$$
(G(b_1), G(b_2)) \in \delta_{b_1,b_2} + q^{-1} \mathbf{A}_\infty
$$
for all $b_1,b_2 \in \mathcal{B}(\lambda)$.


For each $\lambda \in X^+$, let
$$
\pi_{\pm \lambda}: \mathbf{U}^\mp \rightarrow V(\pm \lambda)
$$
denote the $\mathbf{U}^\mp$-module homomorphism given by $\pi_{\pm \lambda}(x) = xv_{\pm \lambda}$.
By \cite[Theorem 5]{Kas91}, it induces a map $\pi_{\pm \lambda}: \mathcal{B}(\pm \infty) \rightarrow \mathcal{B}(\pm \lambda) \sqcup \{0\}$, and the induced map restricts to a bijection
$$
\mathcal{B}(\pm \infty; \pm\lambda) := \{ b \in \mathcal{B}(\pm \infty) \mid \pi_{\pm \lambda}(b) \neq 0 \} \rightarrow \mathcal{B}(\pm \lambda).
$$
Let
\begin{align}
  \iota_{\pm \lambda}: \mathcal{B}(\pm \lambda) \rightarrow \mathcal{B}(\pm \infty; \pm \lambda) \label{eq: iota_pm lm}
\end{align}
denote its inverse.
For each $b \in \mathcal{B}(\pm\lambda)$, we have the following:
\begin{align*}
  &G(b) = G_{\pm\infty}(\iota_{\pm\lambda}(b)) v_{\pm\lambda}, \\
  &\iota_{\pm\lambda}(b_{\pm\lambda}) = b_{\pm\infty}, \\
  &\operatorname{wt}(\iota_{\pm\lambda}(b)) = \operatorname{wt}(b)\mp\lambda, \\
  &\varepsilon_i(\iota_\lambda(b)) = \varepsilon_i(b),\ \varphi_i(\iota_{-\lambda}(b)) = \varphi_i(b), \\
  &\tilde{F}_i(\iota_\lambda(b)) = \iota_\lambda(\tilde{F}_i b) \text{ if } \tilde{F}_i b \neq 0,\ \tilde{E}_i \iota_{-\lambda}(b) = \iota_{-\lambda}(\tilde{E}_i b) \text{ if } \tilde{E}_i b \neq 0
\end{align*}
for all $i \in I$, $b \in \mathcal{B}(\pm \lambda)$.

\subsection{Based module}\label{subsec: based U-mod}
Set $\mathcal{A} := \mathbb{Q}[q,q^{-1}]$.
Let $\dot{\mathbf{U}} = \bigoplus_{\lambda,\mu \in X} 1_\lambda \dot{\mathbf{U}} 1_\mu$ denote the modified quantum group, and ${}_{\mathcal{A}} \dot{\mathbf{U}}$ its $\mathcal{A}$-form.

Following \cite[Section 2.1]{BW16}, we define a based $\mathbf{U}$-module to be an integrable $\mathbf{U}$-module $M$ equipped with a linear basis $\mathbf{B}_M$ satisfying the following:
\begin{itemize}
  \item $\mathbf{B}_M \cap M_\lambda$ is a basis of $M_\lambda$ for all $\lambda \in X$, where
  $$
  M_\lambda := \{ m \in M \mid K_h m = q^{\langle h,\lambda \rangle}m \text{ for all } h \in Y \}.
  $$
  \item ${}_{\mathcal{A}}M := \mathcal{A} \mathbf{B}_M$ is a ${}_{\mathcal{A}} \dot{\mathbf{U}}$-submodule. We call it the $\mathcal{A}$-form of $M$.
  \item The $\mathbb{Q}$-linear map $\overline{\cdot}: M \rightarrow M$ sending $q^n v$ to $q^{-n} v$ for all $n \in \mathbb{Z}$ and $v \in \mathbf{B}_M$ satisfies $\overline{xm} = \bar{x} \bar{m}$ for all $x \in \mathbf{U}$, $m \in M$. We call it the bar-involution on $M$.
  \item Setting $\mathcal{L}_M := \mathbf{A}_\infty \mathbf{B}_M$ and $\mathcal{B}_M := \{ v + q^{-1}\mathcal{L}_M \mid v \in \mathbf{B}_M \}$, the pair $(\mathcal{L}_M, \mathcal{B}_M)$ forms a crystal base of $M$.
\end{itemize}

Let $(M,\mathbf{B}_M)$ be a based $\mathbf{U}$-module.
Then, the quotient map $\mathrm{ev}_\infty: \mathcal{L}_M \rightarrow \mathcal{L}_M/q^{-1}\mathcal{L}_M$ restricts to a $\mathbb{Q}$-linear isomorphism
$$
\mathcal{L}_M \cap {}_{\mathcal{A}} M \cap \overline{\mathcal{L}_M} \rightarrow \mathcal{L}_M/q^{-1}\mathcal{L}_M.
$$
Let $G_M$ denote its inverse.
We sometimes omit the subscript $M$ of $G_M$ for simplicity.

A based submodule of a based module $(M, \mathbf{B}_M)$ is a $\mathbf{U}$-submodule $N \subset M$ spanned by a subset $\mathbf{B}_N \subset \mathbf{B}_M$.
Note that $(N, \mathbf{B}_N)$ is a based $\mathbf{U}$-module in its own right.

Let $(N, \mathbf{B}_N)$ be a based submodule of a based $\mathbf{U}$-module $(M, \mathbf{B}_M)$.
Then,
$$
(M/N, \{ v + N \mid v \in \mathbf{B}_M \setminus \mathbf{B}_N \})
$$
is a based $\mathbf{U}$-module.

Let $(M, \mathbf{B}_M), (N, \mathbf{B}_N)$ be based $\mathbf{U}$-modules, and $f: M \rightarrow N$ a $\mathbf{U}$-module homomorphism.
We say that $f$ is a based $\mathbf{U}$-module homomorphism if $f(\mathbf{B}_M) \subset \mathbf{B}_N \sqcup \{0\}$ and $\operatorname{Ker} f$ is a based submodule.
This definition is reformulated as follows.

\begin{lem}\label{lem: criterion of based hom}
  Let $(M, \mathbf{B}_M), (N, \mathbf{B}_N)$ be based $\mathbf{U}$-modules, and $f: M \rightarrow N$ a $\mathbf{U}$-module homomorphism.
  Then, $f$ is a based $\mathbf{U}$-module homomorphism if and only if it satisfies the following:
  \begin{itemize}
    \item $f(\mathcal{L}_M) \subset \mathcal{L}_N$; it induces a map
    $$
    \phi: \mathcal{B}_M \rightarrow \mathcal{B}_N;\ b \mapsto \mathrm{ev}_\infty(f(G_M(b))).
    $$
    \item $f({}_{\mathcal{A}}M) \subset {}_{\mathcal{A}}N$.
    \item $f \circ \psi_M = \psi_N \circ f$, where $\psi_M,\psi_N$ denote the bar-involution on $M,N$, respectively.
    \item $\phi$ is injective on $\{ b \in \mathcal{B}_M \mid \phi(b) \neq 0 \}$.
  \end{itemize}
\end{lem}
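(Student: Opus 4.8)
The plan is to deduce everything from one elementary observation — the one underlying uniqueness of canonical bases: if $N$ is a based $\mathbf{U}$-module and $x \in {}_{\mathcal{A}}N$ satisfies $\overline{x} = x$ and $x \in q^{-1}\mathcal{L}_N$, then $x = 0$. Indeed, writing $x = \sum_{w \in \mathbf{B}_N} c_w\, w$ with $c_w \in \mathcal{A} = \mathbb{Q}[q,q^{-1}]$, the condition $x \in q^{-1}\mathcal{L}_N = q^{-1}\mathbf{A}_\infty \mathbf{B}_N$ forces $c_w \in q^{-1}\mathbb{Q}[q^{-1}]$, while $\overline{x} = x$ forces $c_w = \overline{c_w}$, hence also $c_w \in q\mathbb{Q}[q]$; since $q^{-1}\mathbb{Q}[q^{-1}] \cap q\mathbb{Q}[q] = 0$ we get $x = 0$. (This is just the injectivity half of the isomorphism $\mathcal{L}_N \cap {}_{\mathcal{A}}N \cap \overline{\mathcal{L}_N} \xrightarrow{\ \sim\ } \mathcal{L}_N/q^{-1}\mathcal{L}_N$ built into the definition of $G_N$.) Throughout I also use that $\mathrm{ev}_\infty$ is injective on $\mathbf{B}_N$ and that $G_M(\mathrm{ev}_\infty(v)) = v$ for $v \in \mathbf{B}_M$.

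For the ``only if'' direction I would start from $f(\mathbf{B}_M) \subset \mathbf{B}_N \sqcup \{0\}$ and $\operatorname{Ker} f = \operatorname{span}\mathbf{B}_K$ for some $\mathbf{B}_K \subseteq \mathbf{B}_M$. Since $f$ is $\mathbb{Q}(q)$-linear and $\mathcal{L}_M, {}_{\mathcal{A}}M$ are the $\mathbf{A}_\infty$- and $\mathcal{A}$-spans of $\mathbf{B}_M$, the inclusions $f(\mathcal{L}_M) \subset \mathcal{L}_N$ and $f({}_{\mathcal{A}}M) \subset {}_{\mathcal{A}}N$ are immediate; commutativity with the bar-involutions is checked on $\mathbf{B}_M$, each element of which is bar-invariant and maps to a bar-invariant element of $N$. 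For $v \in \mathbf{B}_M$ one then has $\phi(\mathrm{ev}_\infty(v)) = \mathrm{ev}_\infty(f(v))$, which is $0$ exactly when $f(v) = 0$ and lies in $\mathcal{B}_N$ otherwise. Finally, from $\operatorname{Ker} f = \operatorname{span}\mathbf{B}_K$ one reads off $\mathbf{B}_K = \{v \in \mathbf{B}_M \mid f(v) = 0\}$, so $f$ is injective on the span of $\mathbf{B}_M \setminus \mathbf{B}_K$; hence the $f(v)$ for $v \in \mathbf{B}_M \setminus \mathbf{B}_K$ are pairwise distinct elements of $\mathbf{B}_N$, and $\phi$ is injective on $\{b \mid \phi(b) \neq 0\}$.

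For the ``if'' direction, assume the four bullet points. First I would show $f(\mathbf{B}_M) \subset \mathbf{B}_N \sqcup \{0\}$: fix $v \in \mathbf{B}_M$, set $b := \mathrm{ev}_\infty(v)$, and put $x := f(v)$ if $\phi(b) = 0$, or $x := f(v) - w$ if $\phi(b) = \mathrm{ev}_\infty(w)$ for some $w \in \mathbf{B}_N$. Then $x \in {}_{\mathcal{A}}N$ by the second bullet (and $w \in {}_{\mathcal{A}}N$), $\overline{x} = x$ by the third bullet together with bar-invariance of $v$ and $w$, and $x \in q^{-1}\mathcal{L}_N$ by the first bullet and $\mathrm{ev}_\infty(f(v)) = \phi(b)$; the observation above gives $x = 0$, so $f(v) \in \{0\} \cup \mathbf{B}_N$ and, moreover, $f(v) = 0 \Leftrightarrow \phi(b) = 0$. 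It remains to see $\operatorname{Ker} f$ is based. Let $\mathbf{B}_M^0 := \{v \in \mathbf{B}_M \mid f(v) = 0\}$; clearly $\operatorname{span}\mathbf{B}_M^0 \subseteq \operatorname{Ker} f$. By the fourth bullet, $v \mapsto f(v)$ is injective on $\mathbf{B}_M \setminus \mathbf{B}_M^0$ (two equal images would give two distinct elements of $\mathcal{B}_M$ with the same nonzero $\phi$-value), so $\{f(v) \mid v \in \mathbf{B}_M \setminus \mathbf{B}_M^0\}$ is a linearly independent subset of $\mathbf{B}_N$; expanding any $m \in \operatorname{Ker} f$ in $\mathbf{B}_M$ and applying $f$ then kills the coefficients indexed by $\mathbf{B}_M \setminus \mathbf{B}_M^0$, whence $m \in \operatorname{span}\mathbf{B}_M^0$. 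Thus $\operatorname{Ker} f$ is spanned by a subset of $\mathbf{B}_M$, as required.

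I do not expect a genuine obstacle here: the statement is a repackaging of the definition of a based $\mathbf{U}$-module homomorphism, and the only place demanding mild attention is to notice that the fourth bullet is exactly what upgrades ``$f$ carries $\mathbf{B}_M$ into $\mathbf{B}_N \sqcup \{0\}$'' to ``$\operatorname{Ker} f$ is a based submodule''; everything else is linear bookkeeping over the elementary vanishing fact recalled at the start.
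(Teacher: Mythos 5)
Your proposal is correct and follows essentially the same route as the paper: both deduce $f(v) = G_N(\phi(\mathrm{ev}_\infty(v)))$ for $v \in \mathbf{B}_M$ from the three compatibility bullets (your explicit vanishing observation is precisely the injectivity of $\mathcal{L}_N \cap {}_{\mathcal{A}}N \cap \overline{\mathcal{L}_N} \to \mathcal{L}_N/q^{-1}\mathcal{L}_N$ that the paper invokes via $G_N \circ \mathrm{ev}_\infty$), and both then use the injectivity hypothesis on $\phi$ to show $\operatorname{Ker} f$ is spanned by the $v$ with $\phi(\mathrm{ev}_\infty(v)) = 0$. The only cosmetic difference is that you spell out the ``only if'' direction, which the paper dismisses as obvious.
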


\begin{proof}
  The ``only if'' part is obvious.
  Let us prove the opposite direction.
  By the assumption on $f$, we see that
  $$
  f(G_M(b)) \in \mathcal{L}_N \cap {}_{\mathcal{A}}N \cap \overline{\mathcal{L}_N}
  $$
  for all $b \in \mathcal{B}_M$.
  Hence, we obtain
  \begin{align}
    f(G_M(b)) = (G_N \circ \mathrm{ev}_\infty)(f(G_M(b))) = G_N(\phi(b)) \label{eq: image of G_M(b)}
  \end{align}
  for all $b \in \mathcal{B}_M$.
  This implies that $f(\mathbf{B}_M) \subset \mathbf{B}_N \sqcup \{0\}$.
  In order to complete the proof, let us investigate $\operatorname{Ker} f$.
  Let $v \in \operatorname{Ker} f$, and write
  $$
  v = \sum_{b \in \mathcal{B}_M} c_b G_M(b)
  $$
  for some $c_b \in \mathbb{Q}(q)$.
  Then, by equation \eqref{eq: image of G_M(b)}, we have
  $$
  0 = f(v) = \sum_{\substack{b \in \mathcal{B}_M \\ \phi(b) \neq 0}} c_b G_N(b).
  $$
  Since $\phi$ is injective on $\{ b \in \mathcal{B}_M \mid \phi(b) \neq 0 \}$, we must have $c_b = 0$ for all $b \in \mathcal{B}_M$ with $\phi(b) \neq 0$.
  Therefore, we obtain
  $$
  v = \sum_{\substack{b \in \mathcal{B}_M \\ \phi(b) = 0}} c_b G_M(b).
  $$
  Consequently,
  $$
  \operatorname{Ker} f = \mathbb{Q}(q)\{ G_M(b) \mid b \in \mathcal{B}_M \text{ and } \phi(b) = 0 \}.
  $$
  This implies that $f$ is based.
  Thus, the proof completes.
\end{proof}

\begin{ex}\normalfont\label{ex: based mods}
  Let $\lambda,\mu,\lambda_1,\ldots,\lambda_r \in X^+$.
  \begin{enumerate}
    \item $(V(\pm\lambda), \mathbf{B}(\pm\lambda))$ is a based $\mathbf{U}$-module.
    \item\label{ex: based mod V(lm1,...,lmr)} Let $\mathbf{B}(\lambda_1,\ldots,\lambda_r)$ denote the canonical basis of $V(\lambda_1,\ldots,\lambda_r) := V(\lambda_1) \otimes \cdots \otimes V(\lambda_r)$ constructed in \cite[Theorem 2.9]{BW16}.
    Then, $(V(\lambda_1,\ldots,\lambda_r), \mathbf{B}(\lambda_1,\ldots,\lambda_r))$ is a based $\mathbf{U}$-module.
    Its crystal basis $\mathcal{B}(\lambda_1,\ldots,\lambda_r)$ is the tensor product $\mathcal{B}(\lambda_1) \otimes \cdots \otimes \mathcal{B}(\lambda_r)$.
    \item\label{ex: based mods chi} By \cite[Proposition 25.1.2]{Lus10}, for each $\lambda,\mu \in X^+$, there exists a unique based $\mathbf{U}$-module homomorphism
    $$
    \chi_{\lambda,\mu}: V(\lambda+\mu) \rightarrow V(\lambda, \mu)
    $$
    such that $\chi_{\lambda,\mu}(v_{\lambda+\mu}) = v_\lambda \otimes v_\mu$.
    \item\label{ex: based mods delta} By \cite[Proposition 25.1.4]{Lus10}, for each $\lambda \in X^+$, there exists a unique based $\mathbf{U}$-module homomorphism
    $$
    \delta_\lambda: V(-\lambda) \otimes V(\lambda) \rightarrow \mathbb{Q}(q)
    $$
    such that $\delta_\lambda(v_{-\lambda} \otimes v_\lambda) = 1$.
    Here, we identify $(\mathbb{Q}(q), \{1\})$ with the based $\mathbf{U}$-module $(V(0), \mathbf{B}(0))$.
  \end{enumerate}
\end{ex}

Let $\mathcal{C}^+$ denote the category of based $\mathbf{U}$-modules and based homomorphisms consisting of $(M, \mathbf{B}_M)$ with finite-dimensional weight spaces satisfying the following: There exist finitely many $\lambda_1,\ldots,\lambda_r \in X$ such that for each $\lambda \in X$, we have $M_\lambda \neq 0$ only if $\lambda \in \bigcup_{k=1}^r \{ \lambda_k - \alpha \mid \alpha \in \sum_{i \in I} \mathbb{Z}_{\geq 0} \alpha_i \}$.
In particular, $M$ is semisimple with simple components of the form $V(\lambda)$, $\lambda \in X^+$.

Let $(M, \mathbf{B}_M) \in \mathcal{C}^+$.
For each $\lambda \in X^+$, let $I_\lambda(M)$ denote the sum of all submodules of $M$ isomorphic to $V(\lambda)$.
Set
\begin{align*}
  &M[> \lambda] := \bigoplus_{\substack{\mu \in X^+ \\ \mu > \lambda}} I_\mu(M),\quad M[\geq \lambda] := M[> \lambda] \oplus I_\lambda(M), \\
  &\mathcal{B}_M[> \lambda] := \bigsqcup_{\substack{b' \in \mathcal{B}_M^{\text{hi}} \\ \operatorname{wt}(b') > \lambda}} C(b'),\quad \mathcal{B}_M[\geq \lambda] := \bigsqcup_{\substack{b' \in \mathcal{B}_M^{\text{hi}} \\ \operatorname{wt}(b') \geq \lambda}} C(b'),
\end{align*}
where $<$ denotes the partial order on $X^+$ defined by saying $\lambda \leq \mu$ if and only if $\mu - \lambda \in \sum_{i \in I} \mathbb{Z}_{\geq 0} \alpha_i$.
Recall that $C(b')$ denotes the connected component of $\mathcal{B}_M$ containing $b'$ (cf. Subsection \ref{subsec: crystal}).
By the same way as the proofs of \cite[Propositions 27.1.7 and 27.1.8]{Lus10}, we see that the submodules $M[> \lambda]$ and $M[\geq \lambda]$ are based submodules with crystal basis $\mathcal{B}_M[> \lambda]$ and $\mathcal{B}_M[\geq \lambda]$, respectively, and that there exists a based $\mathbf{U}$-module isomorphism $M[\geq \lambda]/M[> \lambda] \rightarrow V(\lambda)^{|\mathcal{B}_{M,\lambda}^{\text{hi}}|}$ sending $G_M(b)$ to $v_\lambda^b$ for all $b \in \mathcal{B}_{M, \lambda}^{\text{hi}}$, where $v_\lambda^b$ denotes the highest weight vector of the $b$-th component.

For each $\lambda \in X^+$, let
$$
p_\lambda: M = \bigoplus_{\mu \in X^+} I_\mu(M) \rightarrow I_\lambda(M)
$$
denote the projection.
For each $b \in \mathcal{B}_M^{\text{hi}}$, set
$$
v_b := p_{\operatorname{wt}(b)}(G_M(b)) \in I_{\operatorname{wt}(b)}(M).
$$
By the above, we see that $v_b$ is a highest weight vector of weight $\operatorname{wt}(b)$ and that
$$
v_b \in \mathcal{L}_M,\ \mathrm{ev}_\infty(v_b) = b.
$$

For each $b \in \mathcal{B}_M^{\text{hi}}$, set
$$
M[\geq b] := M[> \operatorname{wt}(b)] \oplus \mathbf{U} v_b,\quad \mathcal{B}_M[\geq b] := \mathcal{B}_M[> \operatorname{wt}(b)] \sqcup C(b).
$$
The connected component $C(b)$ is isomorphic to $\mathcal{B}(\operatorname{wt}(b))$.
Let
$$
\iota_b: C(b) \rightarrow \mathcal{B}(\operatorname{wt}(b)) \xrightarrow{\iota_{\operatorname{wt}(b)}} \mathcal{B}(\infty; \operatorname{wt}(b))
$$
denote the composition of the isomorphism $C(b) \rightarrow \mathcal{B}(\operatorname{wt}(b))$ and $\iota_{\operatorname{wt}(b)}$ (cf. equation \eqref{eq: iota_pm lm}).

\begin{lem}\label{lem: filtration on based mod}
  Let $(M, \mathbf{B}_M) \in \mathcal{C}^+$, and $b \in \mathcal{B}_M^{\text{hi}}$.
  Then, $M[\geq b]$ is a based submodule of $M$ with crystal basis $\mathcal{B}_M[\geq b]$.
  Moreover, there exists a based $\mathbf{U}$-module isomorphism
  $$
  M[\geq b]/M[> \operatorname{wt}(b)] \rightarrow V(\operatorname{wt}(b))
  $$
  which sends $G_M(b) + M[> \operatorname{wt}(b)]$ to $v_{\operatorname{wt}(b)}$.
\end{lem}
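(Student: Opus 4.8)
The plan is to realize $M[\geq b]$ as the kernel of a based $\mathbf{U}$-module homomorphism issuing from the based submodule $M[\geq\operatorname{wt}(b)]$, whose basedness --- together with its crystal basis $\mathcal{B}_M[\geq\operatorname{wt}(b)]$ and the based isomorphism $M[\geq\operatorname{wt}(b)]/M[>\operatorname{wt}(b)]\xrightarrow{\sim}V(\operatorname{wt}(b))^{|\mathcal{B}_{M,\operatorname{wt}(b)}^{\text{hi}}|}$ --- is already recorded above (following \cite[Propositions 27.1.7 and 27.1.8]{Lus10}). Write $\lambda:=\operatorname{wt}(b)$, and for $b'\in\mathcal{B}_{M,\lambda}^{\text{hi}}$ write $V(\lambda)^{b'}\subseteq V(\lambda)^{|\mathcal{B}_{M,\lambda}^{\text{hi}}|}$ for the $b'$-th direct summand, with highest weight vector $v_\lambda^{b'}$. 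First I would compose the quotient map with that isomorphism to obtain a based $\mathbf{U}$-module homomorphism
$$
\pi\colon M[\geq\lambda]\twoheadrightarrow M[\geq\lambda]/M[>\lambda]\xrightarrow{\ \sim\ }\bigoplus_{b'\in\mathcal{B}_{M,\lambda}^{\text{hi}}}V(\lambda)^{b'},
$$
which satisfies $\operatorname{Ker}\pi=M[>\lambda]$ and $\pi(G_M(b'))=v_\lambda^{b'}$ for all $b'\in\mathcal{B}_{M,\lambda}^{\text{hi}}$. Then, letting $\operatorname{pr}$ be the projection from $\bigoplus_{b'}V(\lambda)^{b'}$ onto the sum of the summands $V(\lambda)^{b'}$ with $b'\neq b$ --- a based homomorphism --- the composite $f:=\operatorname{pr}\circ\pi$ is a based $\mathbf{U}$-module homomorphism, so by Lemma \ref{lem: criterion of based hom} its kernel is a based submodule of $M[\geq\lambda]$, hence of $M$, with crystal basis the union of those connected components of $\mathcal{B}_M[\geq\lambda]$ killed by the induced crystal morphism.

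The main step is to check $\operatorname{Ker}f=M[\geq b]=M[>\lambda]\oplus\mathbf{U}v_b$. Since $M[\geq\lambda]=M[>\lambda]\oplus I_\lambda(M)$ and $\operatorname{Ker}\pi=M[>\lambda]$, the map $\pi$ restricts to an isomorphism $I_\lambda(M)\xrightarrow{\sim}\bigoplus_{b'}V(\lambda)^{b'}$, and so $\operatorname{Ker}f=\pi^{-1}(V(\lambda)^b)=M[>\lambda]\oplus(\pi|_{I_\lambda(M)})^{-1}(V(\lambda)^b)$. On the other hand, $v_b=p_\lambda(G_M(b))$ lies in $I_\lambda(M)$, and $G_M(b)-v_b\in M[\geq\lambda]\cap\bigoplus_{\mu\neq\lambda}I_\mu(M)=M[>\lambda]=\operatorname{Ker}\pi$, whence $\pi(v_b)=\pi(G_M(b))=v_\lambda^b$. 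As $v_b$ is a highest weight vector of dominant weight $\lambda$, the submodule $\mathbf{U}v_b$ is irreducible and isomorphic to $V(\lambda)$, and $\pi$ carries it isomorphically onto $\mathbf{U}v_\lambda^b=V(\lambda)^b$; therefore $(\pi|_{I_\lambda(M)})^{-1}(V(\lambda)^b)=\mathbf{U}v_b$ and $\operatorname{Ker}f=M[>\lambda]\oplus\mathbf{U}v_b=M[\geq b]$.

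For the crystal basis, the induced crystal morphism kills $C(b')$ for every $b'\in\mathcal{B}_M^{\text{hi}}$ with $\operatorname{wt}(b')>\lambda$ (as $\pi$ does), it sends $b\mapsto v_\lambda^b\mapsto 0$ and hence vanishes on all of $C(b)$ (a based submodule's crystal basis is a union of connected components), and it sends $b'\mapsto v_\lambda^{b'}\neq0$ for $b'\in\mathcal{B}_{M,\lambda}^{\text{hi}}\setminus\{b\}$; so the crystal basis of $M[\geq b]=\operatorname{Ker}f$ is exactly $\mathcal{B}_M[>\lambda]\sqcup C(b)=\mathcal{B}_M[\geq b]$. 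For the final assertion I would restrict $\pi$ to the based submodule $M[\geq b]$: this is a based homomorphism with kernel $M[\geq b]\cap M[>\lambda]=M[>\lambda]$ and image $\pi(M[>\lambda]\oplus\mathbf{U}v_b)=\mathbf{U}v_\lambda^b=V(\lambda)^b$, which is a based submodule isomorphic as a based module to $V(\lambda)$. It therefore induces a based $\mathbf{U}$-module isomorphism $M[\geq b]/M[>\lambda]\xrightarrow{\sim}V(\lambda)$ sending $G_M(b)+M[>\lambda]$ to $v_\lambda^b$, i.e.\ to $v_{\operatorname{wt}(b)}$, as desired.

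The part I expect to be the main obstacle is the identification $\operatorname{Ker}f=M[\geq b]$ --- concretely, verifying that $\mathbf{U}v_b$ exhausts the preimage of the $b$-th summand under $\pi$, which hinges on the equality $\pi(v_b)=v_\lambda^b$ and the irreducibility of $\mathbf{U}v_b$. Everything else is formal manipulation of based submodules and their crystal bases, using Lemma \ref{lem: criterion of based hom} and the already-recalled consequences of \cite[Propositions 27.1.7 and 27.1.8]{Lus10}.
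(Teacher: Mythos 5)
Your argument is correct and matches the paper's intent: the paper's entire proof is the line ``The assertion is obvious from the above,'' and your realization of $M[\geq b]$ as the kernel of $\operatorname{pr}\circ\pi$ is precisely the bookkeeping that makes this ``obvious'' precise, relying on the same ingredients the surrounding text records, namely the decomposition $M=\bigoplus_{\mu}I_\mu(M)$ and the based isomorphism $M[\geq\operatorname{wt}(b)]/M[>\operatorname{wt}(b)]\to V(\operatorname{wt}(b))^{|\mathcal{B}_{M,\operatorname{wt}(b)}^{\text{hi}}|}$.
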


\begin{proof}
  The assertion is obvious from the above.
\end{proof}
  
\begin{lem}\label{lem: bar-invariance of v_b}
Let $(M, \mathbf{B}_M) \in \mathcal{C}^+$ and $b \in \mathcal{B}_M^{\text{hi}}$.
Then, we have $\overline{v_b} = v_b$.
\end{lem}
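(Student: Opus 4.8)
The plan is to use that $v_b = p_{\operatorname{wt}(b)}(G_M(b))$ is the image of a bar\nobreakdash-invariant vector under an isotypic projection, together with the fact that such projections commute with the bar\nobreakdash-involution on any $M \in \mathcal{C}^+$.

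First I would note that $G_M(b)$ is itself bar\nobreakdash-invariant. Indeed, $G_M(b)$ is the element of $\mathbf{B}_M$ corresponding to $b$ under the bijection $v \mapsto v + q^{-1}\mathcal{L}_M$, and by the very definition of the bar\nobreakdash-involution on a based $\mathbf{U}$\nobreakdash-module every element of $\mathbf{B}_M$ is fixed by $\overline{\cdot}$; hence $\overline{G_M(b)} = G_M(b)$.

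Second, I would show that the bar\nobreakdash-involution on $M$ stabilizes each isotypic component $I_\mu(M)$, $\mu \in X^+$. Since $\overline{\cdot}$ is semilinear, i.e.\ $\overline{xm} = \bar{x}\,\bar{m}$, the relations $\overline{K_h} = K_{-h}$ force $\overline{\cdot}$ to preserve every weight space $M_\lambda$, and the relations $\overline{E_i} = E_i$ force it to send a highest weight vector of weight $\mu$ to a highest weight vector of weight $\mu$. As $M \in \mathcal{C}^+$ is semisimple and $I_\mu(M)$ is generated as a $\mathbf{U}$\nobreakdash-module by the highest weight vectors of weight $\mu$, semilinearity yields $\overline{I_\mu(M)} \subseteq I_\mu(M)$, hence equality because $\overline{\cdot}$ is an involution. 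Decomposing an arbitrary $m = \sum_{\mu \in X^+} p_\mu(m)$ with $p_\mu(m) \in I_\mu(M)$ and applying $\overline{\cdot}$ then shows $p_\mu \circ \overline{\cdot} = \overline{\cdot} \circ p_\mu$ for every $\mu \in X^+$.

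Combining the two points gives $\overline{v_b} = \overline{p_{\operatorname{wt}(b)}(G_M(b))} = p_{\operatorname{wt}(b)}(\overline{G_M(b)}) = p_{\operatorname{wt}(b)}(G_M(b)) = v_b$, as desired. The argument is short, and the only step requiring a moment's care is the stability of the isotypic components under $\overline{\cdot}$, which itself rests only on the elementary facts that $\overline{\cdot}$ preserves weight spaces and highest weight vectors.
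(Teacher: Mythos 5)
Your proof is correct and takes essentially the same approach as the paper: both rest on the observation that $G_M(b)$ is bar-invariant and that the bar-involution preserves the isotypic decomposition $M = \bigoplus_{\mu} I_\mu(M)$, so that $p_{\operatorname{wt}(b)}$ commutes with $\overline{\cdot}$. The paper phrases the final step slightly differently, writing $\overline{v_b} - v_b = u_b - \overline{u_b}$ with $u_b := G_M(b) - v_b \in M[>\operatorname{wt}(b)]$ and concluding from $I_{\operatorname{wt}(b)}(M) \cap M[>\operatorname{wt}(b)] = 0$, but this is the same computation; you also spell out the (routine) justification that $\overline{\cdot}$ stabilizes each $I_\mu(M)$, which the paper uses without comment.
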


\begin{proof}
  Set $\lambda := \operatorname{wt}(b)$ and $u_b := G_M(b) - v_b \in M[> \lambda]$.
  Since the bar-involution on $M$ preserves $I_\mu(M)$ for all $\mu \in X^+$, we see that $\overline{v_b} \in I_\lambda(M)$ and $\overline{u_b} \in M[> \lambda]$.
  Hence, we obtain
  $$
  \overline{v_b} - v_b = u_b - \overline{u_b} \in I_\lambda(M) \cap M[> \lambda] = 0.
  $$
  This proves the assertion.
\end{proof}

\begin{lem}\label{lem: can lifting of cry morph}
Let $(M, \mathbf{B}_M), (N, \mathbf{B}_N) \in \mathcal{C}^+$, and $\phi: \mathcal{B}_M \rightarrow \mathcal{B}_N$ be a strict crystal morphism.
Then, there exists a unique $\mathbf{U}$-module homomorphism $f: M \rightarrow N$ such that $f(v_b) = v_{\phi(b)}$ for all $b \in \mathcal{B}_M^{\text{hi}}$. Here, we set $v_{\phi(b)} = 0$ if $\phi(b) = 0$.
Moreover, the following hold:
\begin{itemize}
  \item $f \circ \psi_M = \psi_N \circ f$, where $\psi_M,\psi_N$ denote the bar-involutions on $M,N$, respectively.
  \item $f(\mathcal{L}_M) \subset \mathcal{L}_N$.
  \item $\phi \circ \mathrm{ev}_\infty = \mathrm{ev}_\infty \circ f$ on $\mathcal{L}_M$.
\end{itemize}
\end{lem}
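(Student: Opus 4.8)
The plan is to build $f$ one simple summand at a time, using that every object of $\mathcal{C}^+$ is semisimple with the highest weight vectors $v_b$ $(b \in \mathcal{B}_M^{\text{hi}})$ as a distinguished frame, and then to read off the remaining assertions from the fact that $\mathcal{L}_M$ is spanned over $\mathbf{A}_\infty$ by the Kashiwara descendants of the $v_b$.

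First I would note that $\phi$ maps $\mathcal{B}_M^{\text{hi}}$ into $\mathcal{B}_N^{\text{hi}} \sqcup \{0\}$ weight-preservingly: if $\phi(b) \neq 0$ then $\operatorname{wt}(\phi(b)) = \operatorname{wt}(b)$ and, by strictness, $\tilde{E}_i \phi(b) = \phi(\tilde{E}_i b) = \phi(0) = 0$ for all $i \in I$, so $v_{\phi(b)}$ is a highest weight vector of $N$ of dominant weight $\operatorname{wt}(b)$. Since $\mathbf{U} v_b \cong V(\operatorname{wt}(b))$ via $v_b \mapsto v_{\operatorname{wt}(b)}$ and $M = \bigoplus_{b \in \mathcal{B}_M^{\text{hi}}} \mathbf{U} v_b$ (both extracted from the discussion preceding Lemma~\ref{lem: filtration on based mod}), standard highest weight theory yields a unique $\mathbf{U}$-module homomorphism $f_b \colon \mathbf{U} v_b \to N$ with $f_b(v_b) = v_{\phi(b)}$ (and $f_b := 0$ if $\phi(b) = 0$); then $f := \bigoplus_b f_b$ is a $\mathbf{U}$-module homomorphism with $f(v_b) = v_{\phi(b)}$ for all $b$, and it is the only one, since the $v_b$ generate $M$. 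For bar-equivariance, write $m = \sum_b x_b v_b$ with $x_b \in \mathbf{U}$ and use $\overline{v_b} = v_b$, $\overline{v_{\phi(b)}} = v_{\phi(b)}$ (Lemma~\ref{lem: bar-invariance of v_b}) to get $\psi_N(f(m)) = \sum_b \overline{x_b}\, v_{\phi(b)} = f(\psi_M(m))$.

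The remaining two points rest on the identity $\mathcal{L}_M = \sum_{b \in \mathcal{B}_M^{\text{hi}}} \sum_{i_1,\ldots,i_k \in I} \mathbf{A}_\infty\, \tilde{F}_{i_1} \cdots \tilde{F}_{i_k} v_b$: the right-hand side lies in $\mathcal{L}_M$ because $v_b \in \mathcal{L}_M$ and $\mathcal{L}_M$ is stable under Kashiwara operators, and it surjects onto $\mathcal{L}_M / q^{-1} \mathcal{L}_M$ because $\mathrm{ev}_\infty(\tilde{F}_{i_1} \cdots \tilde{F}_{i_k} v_b) = \tilde{F}_{i_1} \cdots \tilde{F}_{i_k} b$ runs over all of $\mathcal{B}_M$; since $\mathcal{L}_M$ is finitely generated over the local ring $\mathbf{A}_\infty$ in each weight, Nakayama's lemma forces equality. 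As $\mathbf{U}$-module homomorphisms commute with Kashiwara operators, $f(\tilde{F}_{i_1} \cdots \tilde{F}_{i_k} v_b) = \tilde{F}_{i_1} \cdots \tilde{F}_{i_k} v_{\phi(b)} \in \mathcal{L}_N$, so $f(\mathcal{L}_M) \subseteq \mathcal{L}_N$; and applying $\mathrm{ev}_\infty$ and using strictness, $\mathrm{ev}_\infty(f(\tilde{F}_{i_1} \cdots \tilde{F}_{i_k} v_b)) = \tilde{F}_{i_1} \cdots \tilde{F}_{i_k} \phi(b) = \phi(\tilde{F}_{i_1} \cdots \tilde{F}_{i_k} b) = \phi(\mathrm{ev}_\infty(\tilde{F}_{i_1} \cdots \tilde{F}_{i_k} v_b))$, which extends to $\phi \circ \mathrm{ev}_\infty = \mathrm{ev}_\infty \circ f$ on all of $\mathcal{L}_M$ by linearity.

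The step I expect to demand the most care is the generation identity $\mathcal{L}_M = \sum_b \sum_{i_1,\ldots,i_k} \mathbf{A}_\infty \tilde{F}_{i_1} \cdots \tilde{F}_{i_k} v_b$, which is where the structure theory of $\mathcal{C}^+$ (semisimplicity, finite-dimensional weight spaces, and the facts $v_b \in \mathcal{L}_M$, $\mathrm{ev}_\infty(v_b) = b$ recalled before Lemma~\ref{lem: filtration on based mod}) meets the Nakayama argument over $\mathbf{A}_\infty$. Everything else — the construction and uniqueness of $f$, bar-equivariance, and the compatibility with $\mathrm{ev}_\infty$ — is formal, resting on standard highest weight module theory and on the fact that $\mathbf{U}$-module homomorphisms commute with the Kashiwara operators.
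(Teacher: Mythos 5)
Your proposal is correct and follows essentially the same route as the paper: construct $f$ from the highest-weight decomposition, obtain bar-equivariance from Lemma~\ref{lem: bar-invariance of v_b}, and verify the lattice-preservation and $\mathrm{ev}_\infty$-compatibility statements on vectors of the form $\tilde{F}_{i_1}\cdots\tilde{F}_{i_k} v_b$ using that $\mathbf{U}$-module homomorphisms commute with Kashiwara operators. The only difference is that you spell out the Nakayama-over-$\mathbf{A}_\infty$ argument showing these vectors span $\mathcal{L}_M$, a step the paper leaves implicit in the phrase ``these imply the remaining assertions.''
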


\begin{proof}
  The existence of $f$ follows from easy observation that $v_{\phi(b)}$ is either $0$ or a highest weight vector of weight $\operatorname{wt}(b)$.
  The commutativity of $f$ and the bar-involutions follows from Lemma \ref{lem: bar-invariance of v_b}.

  In order to show the remaining assertions, let $b \in \mathcal{B}_M^{\text{hi}}$ and $b' \in C(b)$.
  Then, we can write $b' = \tilde{F}_{i_1} \cdots \tilde{F}_{i_r} b$ for some $i_1,\ldots,i_r \in I$.
  Set $v := \tilde{F}_{i_1} \cdots \tilde{F}_{i_r} v_b \in \mathcal{L}_M$.
  Then, we have
  \begin{align*}
    &\mathrm{ev}_\infty(v) = \tilde{F}_{i_1} \cdots \tilde{F}_{i_r} b = b', \\
    &f(v) = \tilde{F}_{i_1} \cdots \tilde{F}_{i_r} v_{\phi(b)} \in \mathcal{L}_N, \\
    &\mathrm{ev}_\infty(f(v)) = \tilde{F}_{i_1} \cdots \tilde{F}_{i_r} \phi(b) = \phi(b').
  \end{align*}
  These imply the remaining assertions.
  Thus, the proof completes.
\end{proof}

\begin{prop}\label{prop: suff cond to be based hom}
Let $(M, \mathbf{B}_M), (N, \mathbf{B}_N) \in \mathcal{C}^+$, $\phi: \mathcal{B}_M \rightarrow \mathcal{B}_N$ be a strict crystal morphism, and $f: M \rightarrow N$ the $\mathbf{U}$-module homomorphism in {\rm Lemma \ref{lem: can lifting of cry morph}}.
Suppose that $f(E_i G_M(b)) = E_i G_N(\phi(b))$ for all $i \in I$ and $b \in \mathcal{B}_M^{\text{hi}}$.
Then, we have $f(G_M(b)) = G_N(\phi(b))$ for all $b \in \mathcal{B}_M$.
Furthermore, if $\phi(b_1) = \phi(b_2) \neq 0$ implies $b_1 = b_2$ for all $b_1,b_2 \in \mathcal{B}_M^{\text{hi}}$, then $f$ is based.
\end{prop}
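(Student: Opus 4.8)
The plan is to bootstrap from the highest weight vectors to all of $\mathcal{B}_M$ by induction along the partial order on $X^+$, using the hypothesis $f(E_iG_M(b)) = E_iG_N(\phi(b))$ to control the ``error terms'' that appear when one applies lowering operators $\tilde{F}_i$ to $v_b$. The point is that by Lemma \ref{lem: can lifting of cry morph} we already know $f(\mathcal{L}_M)\subset\mathcal{L}_N$, that $f$ commutes with the bar-involutions, and that $\mathrm{ev}_\infty\circ f = \phi\circ\mathrm{ev}_\infty$ on $\mathcal{L}_M$; what is missing is the precise statement that $f$ carries the global basis element $G_M(b)$ to $G_N(\phi(b))$ (and not merely into $G_N(\phi(b)) + q^{-1}\mathcal{L}_N$).

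I would argue as follows. Fix $b\in\mathcal{B}_M$ and let $b_0\in\mathcal{B}_M^{\mathrm{hi}}$ be the highest weight element of the connected component $C(b_0)$ containing $b$; write $\lambda := \operatorname{wt}(b_0)$. Work by downward induction on the weight of $b$ within $C(b_0)$ (equivalently on the number of $\tilde{F}_i$'s needed to reach $b$ from $b_0$), and simultaneously use induction on $\lambda\in X^+$ via the filtration $M[\geq\lambda]\supset M[>\lambda]$ supplied by the discussion preceding Lemma \ref{lem: filtration on based mod}. For the base case $b = b_0$: the element $w := f(G_M(b_0)) - G_N(\phi(b_0))$ lies in $\mathcal{L}_N$ (since $f(\mathcal{L}_M)\subset\mathcal{L}_N$), is bar-invariant (as both $f$ commutes with the bar-involutions and $G_M(b_0), G_N(\phi(b_0))$ are bar-invariant), lies in ${}_{\mathcal{A}}N$, and satisfies $\mathrm{ev}_\infty(w) = \phi(b_0) - \phi(b_0) = 0$, hence $w\in q^{-1}\mathcal{L}_N\cap{}_{\mathcal{A}}N\cap\overline{\mathcal{L}_N}$. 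But the characterization of $G_N$ as the inverse of $\mathrm{ev}_\infty$ on $\mathcal{L}_N\cap{}_{\mathcal{A}}N\cap\overline{\mathcal{L}_N}$ forces $w = 0$; alternatively, $f(G_M(b_0)) = f(v_{b_0} + u_{b_0}) = v_{\phi(b_0)} + f(u_{b_0})$ with $u_{b_0}\in M[>\lambda]$, and one invokes the inductive hypothesis on the based module $M[>\lambda]$ to conclude $f(u_{b_0}) = G_N$-combination living in $N[>\lambda]$, matching $G_N(\phi(b_0)) - v_{\phi(b_0)}$.

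For the inductive step, suppose $f(G_M(b')) = G_N(\phi(b'))$ for all $b'$ strictly above $b$ in its component, and pick $i\in I$ and $b''$ with $\tilde{F}_i b'' = b$, $\varphi_i(b) = \varphi_i(b'') + 1$, so that $F_i^{(\varphi_i(b''))}$-type Kashiwara-operator reasoning applies. The standard fact (from the rank-one $\mathfrak{sl}_2$-theory underlying Kashiwara's construction) is that $G_M(b) \equiv c\,F_iG_M(b'') \pmod{\text{lower }\varphi_i\text{ terms and }q^{-1}\mathcal{L}_M}$ for an explicit unit $c$, and more precisely $G_M(b)$ is the unique bar-invariant element of ${}_{\mathcal{A}}M$ congruent to $b$ mod $q^{-1}\mathcal{L}_M$. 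Applying $f$ and using the induction hypothesis together with $f(E_jG_M(b_0)) = E_jG_N(\phi(b_0))$ — which, propagated through the $\tilde{F}$-string via the commutation relations $[E_j, F_i^{(n)}]$, yields $f(E_jG_M(b)) = E_jG_N(\phi(b))$ for \emph{all} $b$, not just highest weight ones — one shows $f(G_M(b)) - G_N(\phi(b))$ is annihilated by every $E_j$, lies in $q^{-1}\mathcal{L}_N$, is bar-invariant and in ${}_{\mathcal{A}}N$; being killed by all $E_j$ it is a sum of highest weight vectors, but a nonzero bar-invariant highest weight vector in ${}_{\mathcal{A}}N$ congruent to $0$ mod $q^{-1}\mathcal{L}_N$ is impossible (it would have to equal some $v_c$ with $\mathrm{ev}_\infty(v_c)=c\neq 0$). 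Hence the difference is $0$. The propagation of the hypothesis $f(E_iG_M(b)) = E_iG_N(\phi(b))$ from highest weight elements to the whole crystal is the step I expect to require the most care: one must check that the crystal-theoretic identity $\phi(\tilde{F}_i\,\cdot) = \tilde{F}_i\phi(\,\cdot)$ (strictness of $\phi$) together with $f\circ\psi_M = \psi_N\circ f$ and $f(\mathcal{L}_M)\subset\mathcal{L}_N$ really does upgrade to the exact quantum-group-level identity after multiplying by the relevant divided powers, and this is where the bulk of the bookkeeping lives.

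Finally, once $f(G_M(b)) = G_N(\phi(b))$ for all $b\in\mathcal{B}_M$ is established, $f(\mathbf{B}_M)\subset\mathbf{B}_N\sqcup\{0\}$ is immediate, and the induced map on crystal bases is exactly $\phi$. Under the extra hypothesis that $\phi$ is injective on $\{b\in\mathcal{B}_M^{\mathrm{hi}} : \phi(b)\neq 0\}$ — and hence, by strictness, injective on $\{b\in\mathcal{B}_M : \phi(b)\neq 0\}$, since a strict morphism restricted to a connected component either maps it isomorphically onto a component or kills it entirely — the criterion of Lemma \ref{lem: criterion of based hom} is satisfied (we also have $f({}_{\mathcal{A}}M)\subset{}_{\mathcal{A}}N$ because ${}_{\mathcal{A}}M$ is the $\mathcal{A}$-span of the $G_M(b)$ and each maps to some $G_N(\phi(b))$ or $0$), so $f$ is based. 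That completes the argument.
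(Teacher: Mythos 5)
Your overall plan (induction along the highest-weight filtration, plus the characterization of $G_N$ as the inverse of $\operatorname{ev}_\infty$ on $\mathcal{L}_N\cap{}_{\mathcal{A}}N\cap\overline{\mathcal{L}_N}$) is the right skeleton, and you correctly identify that the $E_i$-hypothesis must be the ingredient that pins things down. But the two places where the real work happens are both left hanging, and in particular the inclusion $f(G_M(b))\in{}_{\mathcal{A}}N$ is never actually established.

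In your base case for $b=b_0\in\mathcal{B}_M^{\text{hi}}$ you assert that $w:=f(G_M(b_0))-G_N(\phi(b_0))$ ``lies in ${}_{\mathcal{A}}N$'' with no argument; this is precisely what needs proving. The ``alternatively'' variant has the same problem: $u_{b_0}=G_M(b_0)-v_{b_0}$ need not lie in ${}_{\mathcal{A}}M$ at all, because $v_{b_0}$ is obtained by a $\mathbb{Q}(q)$-linear (not $\mathcal{A}$-linear) projection onto the isotypic component $I_\lambda(M)$, so writing $u_{b_0}$ as a combination of $G_M(b'')$'s and invoking induction does not by itself show $f(u_{b_0})=u_{\phi(b_0)}$, nor that $f(G_M(b_0))\in{}_{\mathcal{A}}N$. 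The actual argument in the paper is direct and does not invoke the induction hypothesis here: $f(u_{b_0})-u_{\phi(b_0)}$ has weight $\lambda$, lies in $N[>\lambda]$, and is killed by every $E_i$ because $E_iv_{b_0}=E_iv_{\phi(b_0)}=0$ and the hypothesis gives $f(E_iG_M(b_0))=E_iG_N(\phi(b_0))$; since $N[>\lambda]$ contains no highest weight vectors of weight $\lambda$, the difference vanishes, and then $f(G_M(b_0))=v_{\phi(b_0)}+u_{\phi(b_0)}=G_N(\phi(b_0))$. This is where the $E_i$-hypothesis does its work, once per highest-weight element. Your sketch never uses it at this point.

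For the non-highest-weight $b'\in C(b_0)$ your proposed route is to propagate $f(E_jG_M(\cdot))=E_jG_N(\phi(\cdot))$ down the $\tilde F$-string via $[E_j,F_i^{(n)}]$-relations; you flag this yourself as the step ``where the bulk of the bookkeeping lives'' and do not carry it out, and it is also circular as stated (once you know $f(G_M(b))=G_N(\phi(b))$ the identity $f(E_jG_M(b))=E_jG_N(\phi(b))$ is trivial, but you want to deduce the former from the latter). The paper avoids this entirely: by Lemma \ref{lem: filtration on based mod}, $G_M(b')=G_\infty(\iota_b(b'))G_M(b)+\sum_{b''\in\mathcal{B}_M[>\lambda]}c_{b''}G_M(b'')$ with coefficients in $\mathcal{A}$, so applying $f$ and using the induction hypothesis plus the already-established $f(G_M(b))=G_N(\phi(b))$ shows $f(G_M(b'))\in{}_{\mathcal{A}}N$; combined with the three facts from Lemma \ref{lem: can lifting of cry morph} (preservation of $\mathcal{L}$, commutation with bar, compatibility with $\operatorname{ev}_\infty$ and $\phi$) this places $f(G_M(b'))$ in $\mathcal{L}_N\cap{}_{\mathcal{A}}N\cap\overline{\mathcal{L}_N}$ with image $\phi(b')$, forcing equality with $G_N(\phi(b'))$. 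You should replace the propagation sketch with this expansion argument. Your final paragraph (extending injectivity from $\mathcal{B}_M^{\text{hi}}$ to $\{b:\phi(b)\neq 0\}$ by strictness and then citing Lemma \ref{lem: criterion of based hom}) is fine.
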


\begin{proof}
  Let $b \in \mathcal{B}_M^{\text{hi}}$ and set $\lambda := \operatorname{wt}(b)$.
  Then, the number $D(b)$ of elements $b_1 \in \mathcal{B}_M^{\text{hi}}$ such that $\operatorname{wt}(b_1) > \lambda$ is finite.
  We prove that $f(G_M(b')) = G_N(\phi(b'))$ for all $b' \in C(b)$ by induction on $D(b)$.

  Assume that our claim is true for all $b_1 \in \mathcal{B}_M^{\text{hi}}$ with $D(b_1) < D(b)$; note that we assume nothing when $D(b) = 0$.
  Then, we have
  \begin{align}
    f(G_M(b'')) = G_N(\phi(b'')) \label{eq: consequence of induction hypo}
  \end{align}
  for all $b'' \in \mathcal{B}[> \lambda]$.

  Set
  $$
  u_b := G_M(b)-v_b \in M[> \lambda], \quad u_{\phi(b)} := G_N(\phi(b))-v_{\phi(b)} \in N[> \lambda].
  $$
  We shall show that $f(u_b) = u_{\phi(b)}$.
  By the assumption on $f$, we have
  $$
  E_i(f(u_b) - u_{\phi(b)}) = f(E_iG_M(b)) - E_iG_N(\phi(b)) = 0
  $$
  for all $i \in I$.
  This implies that $f(u_b)-u_{\phi(b)}$ is either $0$ or a highest weight vector of weight $\lambda$.
  However, the submodule $N[> \lambda]$, to which $f(u_b)-u_{\phi(b)}$ belongs, has no highest weight vector of weight $\lambda$.
  Hence, the equality $f(u_b) = u_{\phi(b)}$ follows.

  Now, we have
  \begin{align}
    f(G_M(b)) = f(v_b + u_b) = v_{\phi(b)} + u_{\phi(b)} = G_N(\phi(b)). \label{eq: f(G_M(b)) = G_N(phi(b))}
  \end{align}
  Let $b' \in C(b)$.
  We shall show that $f(G_M(b')) = G_N(\phi(b'))$.
  By Lemma \ref{lem: filtration on based mod} (see also Subsection \ref{subsec: V(pm lm)}), we have
  $$
  G_M(b') = G_\infty(\iota_b(b')) G_M(b) + \sum_{b'' \in \mathcal{B}_M[> \lambda]} c_{b''} G_M(b'')
  $$
  for some $b'' \in \mathcal{A}$.
  By equations \eqref{eq: consequence of induction hypo} and \eqref{eq: f(G_M(b)) = G_N(phi(b))}, we obtain
  $$
  f(G_M(b')) = G_\infty(\iota_b(b')) G_N(\phi(b)) + \sum_{b'' \in \mathcal{B}_M[> \lambda]} c_{b''} G_N(\phi(b'')).
  $$
  This implies that $f(G_M(b')) \in {}_{\mathcal{A}} N$.
  By Lemma \ref{lem: can lifting of cry morph}, we see that
  $$
  f(G_M(b')) \in \mathcal{L}_N,\ \mathrm{ev}_\infty(f(G_M(b'))) = \phi(b'),\ \overline{f(G_M(b'))} = f(G_M(b')).
  $$
  These imply that $f(G_M(b')) \in \mathcal{L}_N \cap {}_{\mathcal{A}}N \cap \overline{\mathcal{L}_N}$, and hence,
  $$
  f(G_M(b')) = (G_N \circ \mathrm{ev}_\infty)(f(G_M(b'))) = G_N(\phi(b')),
  $$
  as desired.

  The remaining assertion follows from Lemma \ref{lem: criterion of based hom}.
  Thus, the proof completes.
\end{proof}

\subsection{Based submodule $V(w\lambda, \mu)$}
For each $\lambda,\mu \in X^+$ and $w \in W$, let $V(w\lambda, \mu) \subset V(\lambda) \otimes V(\mu)$ denote the $\mathbf{U}$-submodule generated by $v_{w \lambda} \otimes v_\mu$.
By \cite[Theorem 2.2]{BW21}, the $V(w\lambda, \mu)$ is a based submodule of $V(\lambda, \mu)$.
Set
\begin{align*}
  &\mathcal{L}(w\lambda, \mu) := \mathcal{L}(\lambda, \mu) \cap V(w\lambda, \mu), \\
  &\mathcal{B}(w\lambda, \mu) := \{ v +  q^{-1} \mathcal{L}(w\lambda, \mu) \mid v \in \mathbf{B}(\lambda, \mu) \cap V(w\lambda, \mu) \}.
\end{align*}
Then, $(\mathcal{L}(w\lambda, \mu), \mathcal{B}(w\lambda, \mu))$ forms a crystal base of $V(w\lambda, \mu)$.

\begin{lem}\label{lem: V(nu) otimes V(wlm,mu) is based}
Let $\lambda,\mu,\nu \in X^+$ and $w \in W$.
Then, $V(\nu) \otimes V(w\lambda,\mu)$ is a based submodule of $V(\nu,\lambda,\mu)$ $($see {\rm Example \ref{ex: based mods}} \eqref{ex: based mod V(lm1,...,lmr)} for the based module structure of $V(\nu,\lambda,\mu)$$)$.
\end{lem}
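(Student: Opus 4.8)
The plan is to reduce the statement to a general fact about based modules: tensoring a based $\mathbf{U}$-module with a \emph{based submodule} of another based $\mathbf{U}$-module produces a based submodule of the tensor product. First I would use the associativity of the tensor product of based $\mathbf{U}$-modules (implicit in \cite[Theorem 2.9]{BW16}) to identify the based $\mathbf{U}$-module $(V(\nu,\lambda,\mu), \mathbf{B}(\nu,\lambda,\mu))$ with the tensor product of the based $\mathbf{U}$-modules $(V(\nu), \mathbf{B}(\nu))$ and $(V(\lambda,\mu), \mathbf{B}(\lambda,\mu))$. Since $(V(w\lambda,\mu), \mathbf{B}(w\lambda,\mu))$ is a based submodule of $(V(\lambda,\mu), \mathbf{B}(\lambda,\mu))$ by \cite[Theorem 2.2]{BW21}, it then suffices to prove: for $(M_1, \mathbf{B}_{M_1}), (M_2, \mathbf{B}_{M_2}) \in \mathcal{C}^+$ and a based submodule $(N, \mathbf{B}_N)$ of $M_2$, the $\mathbf{U}$-submodule $M_1 \otimes N$ of $M_1 \otimes M_2$ is a based submodule.

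To prove this, I would check that every piece of structure defining the based $\mathbf{U}$-module $M_1 \otimes M_2$ restricts to $M_1 \otimes N$. The crystal lattice and $\mathcal{A}$-form of $M_1 \otimes M_2$ are $\mathcal{L}_{M_1} \otimes_{\mathbf{A}_\infty} \mathcal{L}_{M_2}$ and ${}_{\mathcal{A}}M_1 \otimes_{\mathcal{A}} {}_{\mathcal{A}}M_2$; since $N$ is a based submodule we have $\mathcal{L}_N = \mathcal{L}_{M_2} \cap N$ and ${}_{\mathcal{A}}N = {}_{\mathcal{A}}M_2 \cap N$, so intersecting with $M_1 \otimes N$ yields $\mathcal{L}_{M_1} \otimes_{\mathbf{A}_\infty} \mathcal{L}_N$ and ${}_{\mathcal{A}}M_1 \otimes_{\mathcal{A}} {}_{\mathcal{A}}N$. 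For the bar-involution $\psi_{M_1 \otimes M_2}(m_1 \otimes m_2) = \Theta(\overline{m_1} \otimes \overline{m_2})$, where $\Theta$ is the quasi-$R$-matrix and $\overline{\cdot}$ denotes the bar-involutions of $M_1, M_2$: the operator $\overline{\cdot} \otimes \overline{\cdot}$ preserves $M_1 \otimes N$ because $N$ is a based submodule, and $\Theta$ acts on $M_1 \otimes M_2$ through (a completion of) $\mathbf{U} \otimes \mathbf{U}$, whose action on the second tensor factor preserves the $\mathbf{U}$-submodule $N$; hence $\psi_{M_1 \otimes M_2}(M_1 \otimes N) \subset M_1 \otimes N$. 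Therefore $M_1 \otimes N$, equipped with the restricted data, is again a based $\mathbf{U}$-module, whose canonical basis is the tensor-product basis built from $\mathbf{B}_{M_1}$ and $\mathbf{B}_N$.

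Finally I would identify this latter basis with a sub-collection of $\mathbf{B}_{M_1 \otimes M_2}$. For $b_1 \in \mathcal{B}_{M_1}$ and $b_2 \in \mathcal{B}_N$ we have $G_{M_2}(b_2) = G_N(b_2)$, so $G_{M_1}(b_1) \otimes G_{M_2}(b_2) \in M_1 \otimes N$. The element $G_{M_1 \otimes M_2}(b_1 \otimes b_2)$ is the unique $\psi_{M_1 \otimes M_2}$-invariant element of $\mathcal{L}_{M_1 \otimes M_2} \cap {}_{\mathcal{A}}(M_1 \otimes M_2)$ congruent to $G_{M_1}(b_1) \otimes G_{M_2}(b_2)$ modulo $q^{-1}\mathcal{L}_{M_1 \otimes M_2}$; by the previous paragraph the canonical basis element of $M_1 \otimes N$ attached to $b_1 \otimes b_2$ satisfies exactly the same characterization, all the relevant lattices, $\mathcal{A}$-forms and involutions being restrictions of those on $M_1 \otimes M_2$. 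Hence the two coincide, and in particular $G_{M_1 \otimes M_2}(b_1 \otimes b_2) \in M_1 \otimes N$. Thus $\mathbf{B}_{M_1 \otimes M_2} \cap (M_1 \otimes N)$ contains a basis of $M_1 \otimes N$, so $M_1 \otimes N$ is a based submodule; taking $M_1 = V(\nu)$, $M_2 = V(\lambda,\mu)$, $N = V(w\lambda,\mu)$ yields the lemma.

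I expect the main obstacle to be the bookkeeping in the last two steps: fixing the precise conventions for the bar-involution on a tensor product of based modules (which form of the quasi-$R$-matrix, acting on which tensor factor) so that stability of $M_1 \otimes N$ is transparent, and making the uniqueness argument rigorous — i.e., confirming that the intrinsic canonical basis of $M_1 \otimes N$ is literally the restriction of $\mathbf{B}_{M_1 \otimes M_2}$ and not merely congruent to it modulo $q^{-1}\mathcal{L}$. Once the associativity identification $V(\nu,\lambda,\mu) \cong V(\nu) \otimes V(\lambda,\mu)$ of based modules is granted, the rest is routine.
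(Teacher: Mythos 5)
Your proof is correct and yields a more general statement (that tensoring a based $\mathbf{U}$-module with a based submodule of a second based $\mathbf{U}$-module gives a based submodule of the tensor product), but it re-derives things that the paper simply reads off. The paper's proof is much shorter: it cites the explicit expansion of $G(b_1 \otimes b_2 \otimes b_3)$ as a sum $\sum c_{b'_1,b'_2,b'_3}\, G(b'_1) \otimes G(b'_2 \otimes b'_3)$ from \cite[Theorem 2.9]{BW16}, together with the built-in vanishing property that $c_{b'_1,b'_2,b'_3}=0$ unless $G(b'_2 \otimes b'_3)$ lies in the smallest based submodule of $V(\lambda,\mu)$ containing $G(b_2 \otimes b_3)$; since $V(w\lambda,\mu)$ is such a based submodule by \cite[Theorem 2.2]{BW21}, the conclusion is immediate. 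Both arguments ultimately rest on the same mechanism --- the quasi-$R$-matrix acts on the second factor through (a completion of) $\mathbf{U}$ and hence preserves the $\mathbf{U}$-submodule $N$ --- but the paper cites this as a packaged consequence of the construction, whereas you unpack it from first principles, which costs you several extra steps of bookkeeping.

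The one genuine soft spot in your write-up is the final uniqueness argument, where you invoke ``the canonical basis element of $M_1 \otimes N$ attached to $b_1 \otimes b_2$.'' You have verified that $\mathcal{L}_{M_1 \otimes N}$, ${}_{\mathcal{A}}(M_1 \otimes N)$, and the bar-involution all restrict correctly, but the existence of a canonical basis for that restricted data (equivalently, the fact that $\mathcal{L}_{M_1 \otimes N} \cap {}_{\mathcal{A}}(M_1 \otimes N) \cap \overline{\mathcal{L}_{M_1 \otimes N}}$ surjects onto $\mathcal{L}_{M_1 \otimes N}/q^{-1}\mathcal{L}_{M_1 \otimes N}$) is not automatic --- it is precisely the content of the canonical basis theorem. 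You should either cite the general existence result (e.g.\ Lusztig's \cite[Lemma 24.2.1]{Lus10} applied to your verified triple) or, more directly, run the iterative construction of $G_{M_1 \otimes M_2}(b_1 \otimes b_2)$ inside $M_1 \otimes N$ and note that each step stays there, which is essentially what \cite[Theorem 2.9]{BW16} already records. Once that is fixed, your argument is complete, and it does buy you a cleaner, reusable general lemma at the price of more work than the paper chooses to spend.
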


\begin{proof}
  It suffices to show that $G(b_1 \otimes b_2 \otimes b_3) \in V(\nu) \otimes V(w\lambda,\mu)$ for all $b_1 \otimes b_2 \otimes b_3 \in \mathcal{B}(\nu) \otimes \mathcal{B}(w\lambda, \mu) \subset \mathcal{B}(\nu,\lambda,\mu)$.
  By the construction of $G(b_1 \otimes b_2 \otimes b_3)$ in \cite[Theorem 2.9]{BW16}, we have
  $$
  G(b_1 \otimes b_2 \otimes b_3) = \sum_{b'_1 \otimes b'_2 \otimes b'_3 \in \mathcal{B}(\nu, \lambda, \mu)} c_{b'_1,b'_2,b'_3} G(b'_1) \otimes G(b'_2 \otimes b'_3)
  $$
  for some $c_{b'_1,b'_2,b'_3} \in \mathcal{A}$ such that $c_{b'_1,b'_2,b'_3} = 0$ unless $G(b'_2 \otimes b'_3)$ belongs to the smallest based $\mathbf{U}$-submodule of $V(\lambda,\mu)$ containing $G(b_2 \otimes b_3)$.
  Since $V(w\lambda, \mu)$ is based submodule of $V(\lambda,\mu)$ containing $G(b_2 \otimes b_3)$, we see that $c_{b'_1,b'_2,b'_3} = 0$ unless $G(b'_2 \otimes b'_3) \in V(w\lambda, \mu)$.
  This proves the assertion.
\end{proof}

\section{$\imath$Quantum group}\label{sec: iqua grp}
In this section, after recalling the notion of $\imath$quantum groups and based $\mathbf{U}^\imath$-modules, we prove the existence of certain based $\mathbf{U}$-module homomorphisms in Proposition \ref{prop: existence of based U-hom}.
This reduces the problem of stability of $\imath$canonical bases to that of existence of based $\mathbf{U}^\imath$-module homomorphism $\delta^\imath_\nu: V(\nu+w_\bullet\tau\nu) \rightarrow \mathbb{Q}(q)$ sending the highest weight vector to $1$ for each $\nu \in X^+$.

\subsection{Admissible pair}\label{subsec: adm pair}
An admissible pair \cite[Definition 2.3]{Kol14} is a pair $(I_\bullet, \tau)$ consisting of a subset $I_\bullet \subset I$ of finite type and a Dynkin diagram automorphism $\tau$ on $I$ satisfying the following:
\begin{itemize}
  \item $\tau^2 = \mathrm{id}$.
  \item $w_\bullet(\alpha_j) = -\alpha_{\tau(j)}$ for all $j \in I_\bullet$, where $w_\bullet$ denotes the longest element of the Weyl group $W_{I_\bullet}$ for $I_\bullet$.
  \item $\langle \rho^\vee_\bullet, \alpha_i \rangle \in \mathbb{Z}$ for all $i \in I_\circ := I \setminus I_\bullet$ with $\tau(i) = i$, where $\rho^\vee_\bullet$ denotes half the sum of positive coroots for $I_\bullet$.
\end{itemize}
In order to clarify which Dynkin diagram $I$ is considered, we sometimes denote an admissible pair by the triple $(I, I_\bullet, \tau)$.

An admissible pair $(I_\bullet, \tau)$ is said to be \emph{irreducible} if for each $i,j \in I$, there exists a sequence $i = i_1,\ldots,i_r = j \in I$ such that for each $k = 1,\ldots,r-1$, we have either $a_{i_k,i_{k+1}} \neq 0$ or $i_{k+1} = \tau(i_k)$.

An admissible pair is said to be of \emph{finite type} if the Dynkin diagram $I$ is of finite type.
The admissible pairs of finite type with $I_\bullet \neq I$ are identical to the Satake diagrams in \cite{Ara62} (cf. \cite[after Definition 2.3]{Kol14}).

The number of $\tau$-orbits in $I_\circ$ is called the \emph{real rank} of $(I_\bullet, \tau)$.

Later, we will restrict our attention to the irreducible admissible pairs of finite type of real rank $1$.
Here is the list of such admissible pairs; for the labels of Dynkin diagrams (except type $A_1 \times A_1$), we follow \cite[Figs. 2.5 and 2.6]{BS17}:
\begin{itemize}
  \item Type AI.
  \begin{itemize}
    \item $I = \{1\}$: type $A_1$.
    \item $I_\bullet = \emptyset$.
    \item $\tau = \mathrm{id}$.
  \end{itemize}
  \item Type AII.
  \begin{itemize}
    \item $I = \{1,2,3\}$: type $A_3$.
    \item $I_\bullet = \{1,3\}$.
    \item $\tau = \mathrm{id}$.
  \end{itemize}
  \item Type AIII.
  \begin{itemize}
    \item $I = \{1,2\}$: type $A_1 \times A_1$.
    \item $I_\bullet = \emptyset$.
    \item $\tau(1) = 2,\ \tau(2) = 1$.
  \end{itemize}
  \item Type AIV.
  \begin{itemize}
    \item $I = \{1,\ldots,n\}$: type $A_n$ with $n \geq 2$.
    \item $I_\bullet = \{2,\ldots,n-1\}$.
    \item $\tau(i) = n-i+1$.
  \end{itemize}
  \item Type BII.
  \begin{itemize}
    \item $I = \{1,\ldots,n\}$: type $B_n$ with $n \geq 2$.
    \item $I_\bullet = \{2,\ldots,n\}$.
    \item $\tau = \mathrm{id}$.
  \end{itemize}
  \item Type CII.
  \begin{itemize}
    \item $I = \{1,\ldots,n\}$: type $C_n$ with $n \geq 3$.
    \item $I_\bullet = \{1,3,\ldots,n\}$.
    \item $\tau = \mathrm{id}$.
  \end{itemize}
  \item Type DII.
  \begin{itemize}
    \item $I = \{1,\ldots,n\}$: type $D_n$ with $n \geq 4$.
    \item $I_\bullet = \{2,\ldots,n\}$.
    \item $\tau(i) = \begin{cases}
      n & \text{ if } i = n-1 \text{ and } n \in 2\mathbb{Z}, \\
      n-1 & \text{ if } i = n \text{ and } n \in 2\mathbb{Z}, \\
      i & \text{ otherwise}.
    \end{cases}$
  \end{itemize}
  \item Type FII.
  \begin{itemize}
    \item $I = \{1,2,3,4\}$: type $F_4$.
    \item $I_\bullet = \{1,2,3\}$.
    \item $\tau = \mathrm{id}$.
  \end{itemize}
\end{itemize}

\subsection{$\imath$Quantum group}
Assume that $\tau$ induces involutive automorphisms on $Y$ and $X$ such that
$$
\tau h_i = h_{\tau(i)},\ \tau \alpha_i = \alpha_{\tau(i)},\ \langle \tau h, \tau \lambda \rangle = \langle h,\lambda \rangle
$$
for all $i \in I$, $h \in Y$, $\lambda \in X$.
Note that this assumption is always satisfied when $I$ is of finite type and $Y = \sum_{i \in I} \mathbb{Z} h_i$.

Set
$$
Y^\imath := \{ h \in Y \mid h + w_\bullet \tau h = 0 \},\quad X^\imath := X/\{ \lambda + w_\bullet \tau \lambda \mid \lambda \in X \}.
$$
For each $\lambda \in X$, let $\bar{\lambda} \in X^\imath$ denote the image of $\lambda$.
The perfect pairing $\langle,\rangle: Y \times X \rightarrow \mathbb{Z}$ induces a bilinear map $\langle,\rangle: Y^\imath \times X^\imath \rightarrow \mathbb{Z}$ such that
$$
\langle h, \bar{\lambda} \rangle = \langle h, \lambda \rangle
$$
for all $h \in Y^\imath$, $\lambda \in X$.

For each $i \in I_\circ$, chose $\varsigma_i \in \mathbb{Z}[q,q^{-1}]^\times$ and $\kappa_i \in \mathbb{Z}[q,q^{-1}]$ in a way such that
\begin{itemize}
  \item $\varsigma_i = \varsigma_{\tau(i)}$ if $\langle h_i, w_\bullet\alpha_{\tau(i)} \rangle = 0$.
  \item $\kappa_i = 0$ unless $\tau(i) = i$, $\langle h_i,\alpha_j \rangle = 0$ for all $j \in I_\bullet$, and $\langle h_k,\alpha_i \rangle \in 2\mathbb{Z}$ for all $k \in I_\circ$ with $\tau(k) = k$ and $\langle h_k,\alpha_j \rangle = 0$ for all $j \in I_\bullet$.
  \item $\varsigma_{\tau(i)} = (-1)^{\langle 2\rho_\bullet^\vee, \alpha_i \rangle} q_i^{-\langle h_i, 2\rho_\bullet + w_\bullet \alpha_{\tau(i)} \rangle} \overline{\varsigma_i}$, where $\rho_\bullet$ denotes half the sum of positive roots for $I_\bullet$.
  \item $\overline{\kappa_i} = \kappa_i$.
\end{itemize}
The first two conditions are needed to make the $\imath$quantum group below to be a reasonable size (cf. \cite[Theorem 10.8]{Kol14}, see also \cite[Remark 3.3]{BK15}).
The third one guarantees the existence of the bar-involution on $\mathbf{U}^\imath$ (cf. \cite[Corollary 4.2]{Kol21}).
The fourth one was used to construct the quasi-$K$-matrix in \cite[Theorem 6.10]{BK19}.
The constraint $\varsigma_i,\kappa_i \in \mathbb{Z}[q,q^{-1}]$ is necessary for the theory of $\imath$canonical bases in \cite{BW21}.

The $\imath$quantum group is the subalgebra $\mathbf{U}^\imath \subset \mathbf{U}$ generated by
$$
\{ E_j, B_i, K_h \mid j \in I_\bullet,\ i \in I,\ h \in Y^\imath \},
$$
where
$$
B_i := \begin{cases}
  F_i & \text{ if } i \in I_\bullet, \\
  F_i + \varsigma_i T_{w_\bullet}(E_{\tau(i)}) K_i^{-1} + \kappa_i K_i^{-1} & \text{ if } i \in I_\circ.
\end{cases}
$$

By \cite[Corollary 4.2]{Kol21}, there exists a unique $\mathbb{Q}$-algebra automorphism $\overline{\cdot}$ on $\mathbf{U}^\imath$ such that
$$
\overline{E_j} = E_j,\ \overline{B_i} = B_i,\ \overline{K_h} = K_{-h},\ \bar{q} = q^{-1}
$$
for all $j \in I_\bullet$, $i \in I$, $h \in Y^\imath$.
We call it the $\imath$bar-involution on $\mathbf{U}^\imath$.

Let $\mathbf{U}_{I_\bullet} \subset \mathbf{U}^\imath$ denote the subalgebra generated by $\{ E_j,F_j,K_j^{\pm 1} \mid j \in I_\bullet \}$.
It is the quantum group associated with $I_\bullet$.
Similarly, let $\mathbf{U}_{I_\bullet}^+$, $\mathcal{B}_{I_\bullet}(-\infty)$, $\mathcal{B}_{I_\bullet}(-\infty;-\lambda)$, $\mathcal{B}_{I_\bullet}(-\lambda)$, and so on denote the same things without subscripts, but associated with $I_\bullet$.

\subsection{Based module}
Let $\dot{\mathbf{U}}^\imath = \bigoplus_{\zeta,\eta \in X^\imath} 1_\zeta \dot{\mathbf{U}}^\imath 1_\eta$ denote the modified $\imath$quantum group (cf. \cite[Section 3.5]{BW21}, \cite[Section 3.3]{W21a}), and ${}_{\mathcal{A}}\dot{\mathbf{U}}^\imath$ its $\mathcal{A}$-form.

Following \cite[Definition 6.11]{BW21}, we define a based $\mathbf{U}^\imath$-module to be a pair $(M, \mathbf{B}^\imath_M)$ consisting of a weight $\mathbf{U}^\imath$-module $M$ and its linear basis $\mathbf{B}^\imath_M$ satisfying the following:
\begin{itemize}
  \item $\mathbf{B}^\imath_M \cap M_\zeta$ forms a basis of $M_\zeta$, where
  $$
  M_\zeta := \{ m \in M \mid K_h m = q^{\langle h,\zeta \rangle} \text{ for all } h \in Y^\imath \}.
  $$
  \item ${}_{\mathcal{A}} M := \mathcal{A} \mathbf{B}^\imath_M$ is a ${}_{\mathcal{A}} \dot{\mathbf{U}}^\imath$-submodule. We call it the $\mathcal{A}$-form of $M$.
  \item The $\mathbb{Q}$-linear map $\overline{\cdot}: M \rightarrow M$ sending $q^n v$ to $q^{-n} v$ for all $n \in \mathbb{Z}$ and $v \in \mathbf{B}^\imath_M$ satisfies $\overline{xm} = \bar{x}\bar{m}$ for all $x \in \mathbf{U}^\imath$ and $m \in M$.
  We call it the $\imath$bar-involution on $M$.
  \item Setting $\mathcal{L}_M := \mathbf{A}_\infty \mathbf{B}^\imath_M$, the quotient map $\mathrm{ev}_\infty: \mathcal{L}_M \rightarrow \mathcal{L}_M/q^{-1}\mathcal{L}_M$ restricts to a $\mathbb{Q}$-linear isomorphism $\mathcal{L}_M \cap {}_{\mathcal{A}}M \cap \overline{\mathcal{L}_M} \rightarrow \mathcal{L}_M/q^{-1}\mathcal{L}_M$; let $G^\imath_M$ denote its inverse.
\end{itemize}
The notions of based $\mathbf{U}^\imath$-submodules and based $\mathbf{U}^\imath$-module homomorphisms are defined in the same way as based $\mathbf{U}$-modules.
The following can be proved by the same way as Lemma \ref{lem: criterion of based hom}.

\begin{lem}\label{lem: criterion of based ihom}
  Let $(M, \mathbf{B}_M), (N, \mathbf{B}_N)$ be based $\mathbf{U}^\imath$-modules, and $f: M \rightarrow N$ a $\mathbf{U}^\imath$-module homomorphism.
  Then, $f$ is a based $\mathbf{U}^\imath$-module homomorphism if and only if it satisfies the following:
  \begin{itemize}
    \item $f(\mathcal{L}_M) \subset \mathcal{L}_N$; it induces a map
    $$
    \phi: \mathcal{B}_M \rightarrow \mathcal{B}_N;\ b \mapsto \mathrm{ev}_\infty(f(G_M(b))).
    $$
    \item $f({}_{\mathcal{A}}M) \subset {}_{\mathcal{A}}N$.
    \item $f \circ \psi^\imath_M = \psi^\imath_N \circ f$, where $\psi^\imath_M,\psi^\imath_N$ denote the $\imath$bar-involution on $M,N$, respectively.
    \item $\phi$ is injective on $\{ b \in \mathcal{B}_M \mid \phi(b) \neq 0 \}$.
  \end{itemize}
\end{lem}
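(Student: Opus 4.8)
The plan is to copy the proof of Lemma \ref{lem: criterion of based hom} almost word for word, replacing $\mathbf{U}$ by $\mathbf{U}^\imath$, the bar-involution by the $\imath$bar-involution, and $G_M, G_N$ by the maps $G^\imath_M, G^\imath_N$ attached to the based $\mathbf{U}^\imath$-module structures. Indeed, that proof uses nothing about the crystal structure of $\mathbf{U}$-modules: it only invokes the defining isomorphism $\mathrm{ev}_\infty\colon \mathcal{L} \cap {}_{\mathcal{A}}(-) \cap \overline{\mathcal{L}} \xrightarrow{\sim} \mathcal{L}/q^{-1}\mathcal{L}$, the facts $G_M(b) \in \mathcal{L}_M \cap {}_{\mathcal{A}}M$ and $\overline{G_M(b)} = G_M(b)$, and the linear independence of the $G_M(b)$ --- all of which are built into the definition of a based $\mathbf{U}^\imath$-module recalled above.

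The ``only if'' direction is immediate. If $f$ is based, then $f(\mathbf{B}_M) \subset \mathbf{B}_N \sqcup \{0\}$ gives $f(\mathcal{L}_M) \subset \mathcal{L}_N$ and $f({}_{\mathcal{A}}M) \subset {}_{\mathcal{A}}N$ at once, and it forces $\psi^\imath_N \circ f = f \circ \psi^\imath_M$ on the basis $\mathbf{B}_M$, hence everywhere by $\mathbb{Q}(q)$-semilinearity; moreover $\operatorname{Ker} f$ being spanned by a subset of $\mathbf{B}_M$ forces $\phi$ to be injective on $\{ b \in \mathcal{B}_M \mid \phi(b) \neq 0 \}$.

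For the converse, I would first show that $f(G_M(b)) = G_N(\phi(b))$ for all $b \in \mathcal{B}_M$, with the convention $G_N(0) := 0$. The hypotheses $f(\mathcal{L}_M) \subset \mathcal{L}_N$, $f({}_{\mathcal{A}}M) \subset {}_{\mathcal{A}}N$, and $f \circ \psi^\imath_M = \psi^\imath_N \circ f$, together with $G_M(b) \in \mathcal{L}_M \cap {}_{\mathcal{A}}M \cap \overline{\mathcal{L}_M}$, give
$$
f(G_M(b)) \in \mathcal{L}_N \cap {}_{\mathcal{A}}N \cap \overline{\mathcal{L}_N},
$$
and applying the inverse $G_N$ of $\mathrm{ev}_\infty|_{\mathcal{L}_N \cap {}_{\mathcal{A}}N \cap \overline{\mathcal{L}_N}}$ yields
$$
f(G_M(b)) = (G_N \circ \mathrm{ev}_\infty)(f(G_M(b))) = G_N(\phi(b)).
$$
In particular $f(\mathbf{B}_M) \subset \mathbf{B}_N \sqcup \{0\}$. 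It then remains to analyze $\operatorname{Ker} f$: writing $v \in \operatorname{Ker} f$ as $v = \sum_{b \in \mathcal{B}_M} c_b G_M(b)$ with $c_b \in \mathbb{Q}(q)$, the identity above gives $0 = f(v) = \sum_{b\,:\,\phi(b) \neq 0} c_b G_N(\phi(b))$; injectivity of $\phi$ on its support and linear independence of $\{ G_N(b') \mid b' \in \mathcal{B}_N \}$ then force $c_b = 0$ whenever $\phi(b) \neq 0$, so $\operatorname{Ker} f = \mathbb{Q}(q)\{ G_M(b) \mid b \in \mathcal{B}_M,\ \phi(b) = 0 \}$ is a based $\mathbf{U}^\imath$-submodule and $f$ is based.

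I expect there to be essentially no serious obstacle here: the content is entirely formal. The only point requiring a moment's care is that, unlike in the $\mathbf{U}$-case, the definition of a based $\mathbf{U}^\imath$-module does not stipulate that $(\mathcal{L}_M, \mathcal{B}_M)$ be a crystal base, so one must check that every step of the transcribed argument is phrased purely in terms of $\mathcal{L}_M$, ${}_{\mathcal{A}}M$, $\psi^\imath_M$ and $G^\imath_M$ --- which, as indicated above, is exactly the case.
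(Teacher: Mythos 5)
Your proposal is correct and matches the paper exactly: the paper's own proof of this lemma consists of the single sentence that it ``can be proved by the same way as Lemma \ref{lem: criterion of based hom},'' and your transcription of that earlier argument---together with the observation that it uses nothing about crystal structure beyond the defining isomorphism $\mathcal{L} \cap {}_{\mathcal{A}}(-) \cap \overline{\mathcal{L}} \xrightarrow{\sim} \mathcal{L}/q^{-1}\mathcal{L}$ and the properties of $G^\imath$---is precisely what is meant.
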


\begin{ex}\normalfont\label{ex: based Ui-mod and hom}
  Let $\lambda,\mu,\lambda_1,\ldots,\lambda_r \in X^+$ and $w \in W$.
  \begin{enumerate}
    \item Let $\mathbf{B}^\imath(\lambda_1,\ldots,\lambda_r)$ denote the $\imath$canonical basis of the based $\mathbf{U}$-module $V(\lambda_1,\ldots,\lambda_r)$ in the sense of \cite[Theorem 6.12]{BW21}.
    Then, $(V(\lambda_1,\ldots,\lambda_r), \mathbf{B}^\imath(\lambda_1,\ldots,\lambda_r))$ forms a based $\mathbf{U}^\imath$-module.
    \item By \cite[Theorem 6.13 (2)]{BW21}, $V(w\lambda,\mu)$ is a based $\mathbf{U}^\imath$-submodule of $V(\lambda,\mu)$; set $\mathbf{B}^\imath(w\lambda, \mu) := \mathbf{B}^\imath(\lambda,\mu) \cap V(w\lambda, \mu)$.
    \item\label{ex: based Ui-mod and hom, Uidot} Let $\dot{\mathbf{B}}^\imath$ denote the $\imath$canonical basis of $\dot{\mathbf{U}}^\imath$ in the sense of \cite[Theorem 7.2]{BW21}.
    Then, $(\dot{\mathbf{U}}^\imath, \dot{\mathbf{B}}^\imath)$ is a based $\mathbf{U}^\imath$-module.
  \end{enumerate}
\end{ex}

\begin{lem}
Let $\lambda,\mu,\nu \in X^+$ and $w \in W$.
Then, $V(\nu) \otimes V(w\lambda,\mu)$ is a based $\mathbf{U}^\imath$-submodule of $V(\nu,\lambda,\mu)$.
\end{lem}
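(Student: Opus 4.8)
The plan is to prove this exactly as Lemma~\ref{lem: V(nu) otimes V(wlm,mu) is based} is proved, but one level up --- at the level of $\imath$canonical bases rather than canonical bases, using the quasi-$K$-matrix. Write $M := V(\nu,\lambda,\mu)$, equipped with its $\imath$canonical basis $\mathbf{B}^\imath(\nu,\lambda,\mu)$ (cf.\ Example~\ref{ex: based Ui-mod and hom}), and $N := V(\nu) \otimes V(w\lambda,\mu)$. By Lemma~\ref{lem: V(nu) otimes V(wlm,mu) is based}, $N$ is a based $\mathbf{U}$-submodule of $M$; in particular it is stable under the bar-involution $\psi_M$, and, being a $\mathbf{U}$-submodule, it is stable under $\mathbf{U}^+$ and hence under the (locally finite, since the weights of $M$ are bounded above) action on $M$ of the quasi-$K$-matrix $\Upsilon$. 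Consequently $N$ is stable under the $\imath$bar-involution $\psi^\imath_M = \Upsilon \circ \psi_M$ of $M$.

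Next I would use that $N$ is itself a based $\mathbf{U}$-module in the category for which \cite{BW21} constructs $\imath$canonical bases (it is a based $\mathbf{U}$-submodule of a tensor product of three integrable highest weight modules), so it carries an $\imath$canonical basis $\mathbf{B}^\imath_N$; by the previous paragraph its $\imath$bar-involution is the restriction of $\psi^\imath_M$, its crystal lattice is $\mathcal{L}_M \cap N$, and its $\mathcal{A}$-form is ${}_{\mathcal{A}}M \cap N$. Then for each $b \in \mathbf{B}(\nu,\lambda,\mu) \cap N$, the $\imath$canonical basis element $b^\imath$ of $N$ is $\psi^\imath_M$-invariant, lies in ${}_{\mathcal{A}}M$, and is congruent to $b$ modulo $q^{-1}\mathcal{L}_M$; by the uniqueness characterization of the $\imath$canonical basis of $M$, it must coincide with the corresponding element of $\mathbf{B}^\imath(\nu,\lambda,\mu)$. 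Hence every element of $\mathbf{B}^\imath(\nu,\lambda,\mu)$ indexed by $\mathbf{B}(\nu,\lambda,\mu) \cap N$ lies in $N$; there are $\dim_{\mathbb{Q}(q)} N$ of them and they are part of a basis, so they form a basis of $N$. Thus $N$ is spanned by a subset of $\mathbf{B}^\imath(\nu,\lambda,\mu)$, which is precisely what it means to be a based $\mathbf{U}^\imath$-submodule. An equivalent route is to apply the quotient map $\pi : M \to M/N$: it is a based $\mathbf{U}$-module homomorphism commuting with both $\Upsilon$ and $\psi$, hence with the $\imath$bar-involution, so $\pi(b^\imath)$ is $\imath$bar-invariant, lies in the $\mathcal{A}$-form, and satisfies $\pi(b^\imath) \equiv \pi(b) = 0 \pmod{q^{-1}\mathcal{L}_{M/N}}$, which forces $\pi(b^\imath) = 0$, i.e.\ $b^\imath \in N$.

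The only delicate point is the bookkeeping around the quasi-$K$-matrix: one has to check that its action on $M$ is well-defined, that it restricts to $N$ and is compatible with the inclusion $N \hookrightarrow M$ and the projection $M \twoheadrightarrow M/N$, and that the based $\mathbf{U}^\imath$-module structure on $N$ (or on $M/N$) supplied by \cite{BW21} is indeed the one induced from $M$. All of this is formal and already built into the framework of \cite{BW21}, so I expect the actual proof to be quite short, reducing to Lemma~\ref{lem: V(nu) otimes V(wlm,mu) is based} in exactly the way \cite[Theorem~6.13(2)]{BW21} reduces to the statement that $V(w\lambda,\mu)$ is a based $\mathbf{U}$-submodule of $V(\lambda,\mu)$.
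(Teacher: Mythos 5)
Your opening sentence describes the paper's approach exactly, but the argument you then develop takes a different and somewhat longer route. The paper really does mirror the proof of the preceding Lemma \ref{lem: V(nu) otimes V(wlm,mu) is based} one level up: it cites directly from the construction in \cite[Theorem 6.12]{BW21} that
$G^\imath(b_1 \otimes b_2 \otimes b_3)$
is an $\mathcal{A}$-linear combination of canonical basis elements $G(b'_1 \otimes b'_2 \otimes b'_3)$, with nonzero coefficients only for those lying in the smallest based $\mathbf{U}$-submodule of $V(\nu,\lambda,\mu)$ containing $G(b_1 \otimes b_2 \otimes b_3)$. Since $V(\nu) \otimes V(w\lambda,\mu)$ is a based $\mathbf{U}$-submodule (by the preceding Lemma) containing $G(b_1 \otimes b_2 \otimes b_3)$, the conclusion is immediate, exactly as in the preceding Lemma where the analogous support property of $G$ from \cite[Theorem 2.9]{BW16} was used.

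Your argument instead invokes the quasi-$K$-matrix $\Upsilon$ and the uniqueness characterization of $\imath$canonical bases: $N$ is $\mathbf{U}^+$-stable and $\psi_M$-stable, hence $\psi^\imath_M$-stable; $N$ is asserted to carry its own $\imath$canonical basis via \cite{BW21}, which must then agree with the corresponding elements of $\mathbf{B}^\imath(\nu,\lambda,\mu)$ by uniqueness. This is morally how the support property of $G^\imath$ is proved in \cite{BW21}, so the logic is sound in spirit, but you are implicitly re-proving a piece of \cite{BW21} rather than citing it, and you lean on the unverified claim that $N$ (a based $\mathbf{U}$-submodule, not itself of the form $V(\lambda_1,\ldots,\lambda_r)$ or $V(w\lambda,\mu)$) is directly in the scope of \cite{BW21}'s $\imath$canonical-basis construction, with the induced $\imath$-structures matching the restrictions from $M$. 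The same issue appears in your alternative route via $\pi : M \to M/N$: concluding $\pi(b^\imath) = 0$ from the three conditions requires the balanced-triple property for $M/N$ (with respect to the $\imath$bar-involution), and that is essentially the content of what we are proving, not a freebie. The paper's one-line citation of the support property of $G^\imath$ sidesteps both concerns and is the faithful ``one level up'' translation you announced at the start.
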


\begin{proof}
  It suffices to show that $G^\imath(b_1 \otimes b_2 \otimes b_3) \in V(\nu) \otimes V(w\lambda,\mu)$ for all $b_1 \otimes b_2 \otimes b_3 \in \mathcal{B}(\nu) \otimes \mathcal{B}(w\lambda, \mu) \subset \mathcal{B}(\nu,\lambda,\mu)$.
  By the construction of $G^\imath(b_1 \otimes b_2 \otimes b_3)$ in \cite[Theorem 6.12]{BW21}, we have
  $$
  G^\imath(b_1 \otimes b_2 \otimes b_3) = \sum_{b'_1 \otimes b'_2 \otimes b'_3 \in \mathcal{B}(\nu,\lambda,\mu)} c_{b'_1,b'_2,b'_3} G(b'_1 \otimes b'_2 \otimes b'_3)
  $$
  for some $c_{b'_1,b'_2,b'_3} \in \mathcal{A}$ such that $c_{b'_1,b'_2,b'_3} = 0$ unless $G(b'_1 \otimes b'_2 \otimes b'_3)$ belongs to the smallest based $\mathbf{U}$-submodule of $V(\nu,\lambda,\mu)$ containing $G(b_1 \otimes b_2 \otimes b_3)$.
  Since $V(\nu) \otimes V(w\lambda, \mu)$ is a based $\mathbf{U}$-submodule containing $G(b_1 \otimes b_2 \otimes b_3)$, we see that $c_{b'_1,b'_2,b'_3} = 0$ unless $G(b'_1 \otimes b'_2 \otimes b'_3) \in V(\nu) \otimes V(w\lambda, \mu)$.
  This proves the assertion.
\end{proof}

\subsection{Based submodule $V(w_\bullet \lambda, \mu)$}
Let $\lambda,\mu \in X^+$.
Let $C_{I_\bullet}(b_\lambda) \subset \mathcal{B}(\lambda)$ denote the connected component of $\mathcal{B}(\lambda)$ containing $b_\lambda$ as the crystal of type $I_\bullet$.
Namely,
$$
C_{I_\bullet}(b_\lambda) = \{ \tilde{F}_{j_1} \cdots \tilde{F}_{j_r} b_\lambda \mid j_1,\ldots,j_r \in I_\bullet \} \setminus \{0\}.
$$
Note that it is isomorphic to $\mathcal{B}_{I_\bullet}(w_\bullet \lambda)$ as a crystal of type $I_\bullet$; we regard $-w_\bullet \lambda$ as a dominant weight for $I_\bullet$.

\begin{lem}\label{lem: characterization of C_dot(b_lm)}
We have
$$
C_{I_\bullet}(b_\lambda) = \{ b \in \mathcal{B}(\lambda) \mid \operatorname{wt}(b) \geq w_\bullet \lambda \}.
$$
\end{lem}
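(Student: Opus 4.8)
The plan is to prove the two inclusions $C_{I_\bullet}(b_\lambda) \subseteq \{ b \in \mathcal{B}(\lambda) \mid \operatorname{wt}(b) \geq w_\bullet \lambda \}$ and its reverse separately, using the crystal combinatorics of type $I_\bullet$ and the known description of $\mathcal{B}(\lambda)$.

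First I would handle the inclusion ``$\subseteq$''. Take $b \in C_{I_\bullet}(b_\lambda)$. Since $C_{I_\bullet}(b_\lambda)$ is, as a crystal of type $I_\bullet$, isomorphic to $\mathcal{B}_{I_\bullet}(w_\bullet\lambda)$ with highest weight element $b_\lambda$ (the lowest weight of the type-$I_\bullet$ string module with highest weight $\lambda$ restricted to $I_\bullet$ is $w_\bullet\lambda$, so the relevant dominant $I_\bullet$-weight is $-w_\bullet\lambda$), every element of $C_{I_\bullet}(b_\lambda)$ is obtained from $b_\lambda$ by applying operators $\tilde F_{j}$ with $j \in I_\bullet$. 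Each such application changes the weight by $-\alpha_j$ with $j \in I_\bullet$, hence $\operatorname{wt}(b) = \lambda - \sum_{j \in I_\bullet} c_j \alpha_j$ with $c_j \in \mathbb{Z}_{\geq 0}$. Since $b_\lambda$ has weight $\lambda$ which is the highest weight and $w_\bullet\lambda$ is the lowest weight of the $I_\bullet$-crystal $C_{I_\bullet}(b_\lambda)$, and $\operatorname{wt}(b)$ is a weight of this crystal, the standard fact that all weights of an irreducible $I_\bullet$-crystal lie between the highest and lowest weight in the $I_\bullet$-dominance order gives $\operatorname{wt}(b) \geq w_\bullet\lambda$ in the $I_\bullet$-root order, hence a fortiori in the full order on $X$ (since $\alpha_j$, $j \in I_\bullet$, are among the simple roots). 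So $\operatorname{wt}(b) \geq w_\bullet\lambda$.

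Next, the reverse inclusion ``$\supseteq$''. Suppose $b \in \mathcal{B}(\lambda)$ with $\operatorname{wt}(b) \geq w_\bullet\lambda$. The key point is that $b_{w_\bullet\lambda}$, the unique element of weight $w_\bullet\lambda$, lies in $C_{I_\bullet}(b_\lambda)$ (it is the type-$I_\bullet$ lowest weight element), and I want to argue ``downward'' from $b$ towards it or ``upward'' towards $b_\lambda$. Concretely, I would show: if $\operatorname{wt}(b) \geq w_\bullet\lambda$ and $\operatorname{wt}(b) \neq \lambda$, then there is $j \in I_\bullet$ with $\tilde E_j b \neq 0$, and $\operatorname{wt}(\tilde E_j b) = \operatorname{wt}(b) + \alpha_j$ still satisfies $\geq w_\bullet\lambda$; iterating, one reaches an element of weight $\lambda$, which must be $b_\lambda$, so $b \in C_{I_\bullet}(b_\lambda)$. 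To produce such a $j$: write $\operatorname{wt}(b) = \lambda - \beta$ where, by $\operatorname{wt}(b) \geq w_\bullet\lambda$ and $\lambda - w_\bullet\lambda = \sum_{j \in I_\bullet} m_j\alpha_j$ (a nonnegative combination of $I_\bullet$-simple roots, since $w_\bullet$ is the longest element of $W_{I_\bullet}$), we get $\beta = \sum_{j\in I_\bullet} n_j \alpha_j$ with $0 \le n_j \le m_j$; in particular $\beta$ is supported on $I_\bullet$ and $\beta \ne 0$. Then $\langle h_j, \operatorname{wt}(b)\rangle$ for $j \in I_\bullet$, summed appropriately, or a minimal-counterexample/extremal argument within the connected type-$I_\bullet$ sub-crystal containing $b$, forces $\varepsilon_j(b) > 0$ for some $j \in I_\bullet$: indeed $b$ lies in some connected $I_\bullet$-component, whose highest weight element $b'$ has $\operatorname{wt}(b') = \operatorname{wt}(b) + (\text{nonneg combo of } \alpha_j, j \in I_\bullet)$ and is again a weight $\geq \operatorname{wt}(b) \geq w_\bullet\lambda$ with all $I_\bullet$-coordinates still bounded by $m_j$; one then shows this component is $C_{I_\bullet}(b_\lambda)$ itself by a weight/multiplicity count (the $I_\bullet$-string module through $b_\lambda$ already accounts for the full multiplicity of every weight $\mu$ with $w_\bullet\lambda \le \mu \le \lambda$ supported on $I_\bullet$-shifts of $\lambda$), so $b' = b_\lambda$ and $b$ is connected to $b_\lambda$ inside $I_\bullet$.

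The main obstacle I anticipate is the last bookkeeping step in the ``$\supseteq$'' direction: ruling out that $b$ lies in a \emph{different} connected $I_\bullet$-component of $\mathcal{B}(\lambda)$ whose weights happen to also satisfy $\geq w_\bullet\lambda$. The cleanest way around this is a dimension/multiplicity argument: decompose $V(\lambda)$ as a $\mathbf{U}_{I_\bullet}$-module; the submodule generated by $v_\lambda$ is irreducible of $I_\bullet$-highest weight $\lambda|_{I_\bullet}$, and for any weight $\mu$ with $w_\bullet\lambda \le \mu \le \lambda$ one checks $\mu = \lambda - \sum_{j\in I_\bullet} n_j\alpha_j$, so the $\mu$-weight space of $V(\lambda)$ is entirely contained in that single irreducible $I_\bullet$-constituent (any other $I_\bullet$-constituent has $I_\bullet$-highest weight $\nu < \lambda$ off the $I_\bullet$-root lattice coset, or is too small to reach up to such $\mu$); hence $\mathcal{B}(\lambda)_\mu \subseteq C_{I_\bullet}(b_\lambda)$ for all such $\mu$. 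Combined with the first paragraph this yields the equality. Alternatively, one can invoke that $w_\bullet\lambda$ is $W_{I_\bullet}$-antidominant so that the interval $[w_\bullet\lambda, \lambda]$ in $X$ meets $\mathcal{B}(\lambda)$ only in the type-$I_\bullet$ Demazure-type piece through $b_\lambda$; I would present whichever of these is shorter given the conventions already fixed in the paper.
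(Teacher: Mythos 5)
Your proof is correct, and the ``$\subseteq$'' inclusion is handled exactly as in the paper. For the reverse inclusion ``$\supseteq$'', however, you take a genuinely different and noticeably heavier route. Both you and the paper begin by deducing that $\lambda - \operatorname{wt}(b)$ is a nonnegative $\mathbb{Z}_{\geq 0}$-combination of $\{\alpha_j \mid j \in I_\bullet\}$ from $w_\bullet\lambda \leq \operatorname{wt}(b) \leq \lambda$, the identity $\lambda - w_\bullet\lambda \in \sum_{j\in I_\bullet}\mathbb{Z}_{\geq 0}\alpha_j$, and linear independence of the simple roots. At this point the paper finishes in one stroke: since $\mathcal{B}(\lambda)$ is generated from $b_\lambda$ by the Kashiwara operators, one has $b = \tilde F_{i_1}\cdots\tilde F_{i_r} b_\lambda$ for \emph{some} $i_1,\ldots,i_r \in I$, and comparing $\alpha_{i_1}+\cdots+\alpha_{i_r} = \lambda - \operatorname{wt}(b)$ with the $I_\bullet$-support just established forces every $i_k$ to lie in $I_\bullet$. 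Your argument instead passes to the $I_\bullet$-highest weight element of the $I_\bullet$-component through $b$, then argues it must be $b_\lambda$ by a multiplicity count on the restriction of $V(\lambda)$ to $\mathbf{U}_{I_\bullet}$, and finally translates this module-level statement back to the crystal. That translation quietly invokes Kashiwara's theorem that the crystal basis is compatible with the decomposition of $V(\lambda)$ into irreducible $\mathbf{U}_{I_\bullet}$-summands, a nontrivial input that the paper's proof does not need. Also, the supporting claim that ``any other $I_\bullet$-constituent has $I_\bullet$-highest weight $\nu$ off the $I_\bullet$-root lattice coset'' is true but is itself just a reformulation of the same linear-independence observation at the level of $\mathbf{U}^- v_\lambda$, so you end up using the key idea twice and dressing it in module-theoretic language. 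The upshot: once you have reduced to knowing that $\lambda - \operatorname{wt}(b)$ is $I_\bullet$-supported, the shortest path is to apply the same linear-independence reasoning directly to an arbitrary $\tilde F$-path from $b_\lambda$ to $b$ rather than to the $\mathbf{U}_{I_\bullet}$-decomposition of $V(\lambda)$.
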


\begin{proof}
  Since $C_{I_\bullet}(b_\lambda) \simeq \mathcal{B}_{I_\bullet}(w_\bullet\lambda)$, we see that $\operatorname{wt}(b) \geq w_\bullet \lambda$ for all $b \in C_{I_\bullet}(b_\lambda)$.
  This proves the containment ``$\subset$''.

  We shall prove the opposite direction.
  Let $b \in \mathcal{B}(\lambda)$ be such that $\operatorname{wt}(b) \geq w_\bullet \lambda$.
  Since $\operatorname{wt}(b) \leq \lambda$, we can write
  \begin{align}
    \lambda - \operatorname{wt}(b) = \sum_{j \in I_\bullet} m_j \alpha_j \label{eq: lm - wt(b)}
  \end{align}
  for some $m_j \geq 0$.
  On the other hand, we have
  $$
  b = \tilde{F}_{i_1} \cdots \tilde{F}_{i_r} b_\lambda
  $$
  for some $i_1,\ldots,i_r \in I$.
  Taking equation \eqref{eq: lm - wt(b)} into account, we see that $i_1,\ldots,i_r \in I_\bullet$.
  This implies that $b \in C_{I_\bullet}(b_\lambda)$.
  Thus, the proof completes.
\end{proof}

By Subsection \ref{subsec: V(pm lm)}, there exist maps
$$
\pi = \pi_{I_\bullet;w_\bullet\lambda}: \mathcal{B}_{I_\bullet}(-\infty) \rightarrow C_{I_\bullet}(b_\lambda) \sqcup \{0\},\ \iota = \iota_{I_\bullet; w_\bullet\lambda}: C_{I_\bullet}(b_\lambda) \hookrightarrow \mathcal{B}_{I_\bullet}(-\infty; w_\bullet\lambda)
$$
such that
\begin{align}
\begin{split}
  &\pi \circ \iota = \mathrm{id}, \\
  &\iota(b_{w_\bullet\lambda}) = b_{-\infty}, \\
  &\operatorname{wt}(\iota(b)) = \operatorname{wt}(b) - w_\bullet\lambda, \\
  &\varphi_j(\iota(b)) = \varphi_j(b), \\
  &\tilde{E}_j \iota(b) = \iota(\tilde{E}_j b) \text{ if } \tilde{E}_j b \neq 0
\end{split} \label{eq: iota_dot lm}
\end{align}
for all $j \in I_\bullet$ and $b \in C_{I_\bullet}(b_\lambda)$.

\begin{lem}\label{lem: G(b otimes b_mu)}
  Let $b \in \mathcal{B}_{I_\bullet}(-\infty)$ and set $\pi := \pi_{I_\bullet; w_\bullet\lambda}$.
  Then, we have
  $$
  G(b)(v_{w_\bullet\lambda} \otimes v_\mu) = G(\pi(b)) \otimes v_\mu = G(\pi(b) \otimes b_\mu).
  $$
  In particular, $G(b' \otimes b_\mu) = G(b') \otimes v_\mu \in V(w_\bullet\lambda, \mu)$ for all $b' \in C_{I_\bullet}(b_\lambda)$.
\end{lem}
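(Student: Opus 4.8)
The plan is to prove the two asserted equalities separately — the first, after a reduction, inside $V(\lambda)$, and the second inside $V(\lambda,\mu)$ — and then to read off the ``in particular'' clause. For the first equality I would begin with the elementary observation that $u(v\otimes v_\mu)=(uv)\otimes v_\mu$ for every $u\in\mathbf{U}^+$ and $v\in V(\lambda)$: this holds for $u=E_i$ because $\Delta(E_i)=E_i\otimes 1+K_i\otimes E_i$ and $E_iv_\mu=0$, and then for arbitrary $u$ by induction on the degree. Since $G(b)\in\mathbf{U}_{I_\bullet}^+\subset\mathbf{U}^+$, this reduces the first equality to the identity $G(b)v_{w_\bullet\lambda}=G(\pi(b))$ in $V(\lambda)$, with the convention $G(0)=0$. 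Here the $\mathbf{U}_{I_\bullet}$-submodule $\mathbf{U}_{I_\bullet}^+v_{w_\bullet\lambda}$ of $V(\lambda)$ equals $\mathbf{U}_{I_\bullet}v_\lambda$ and is the irreducible $\mathbf{U}_{I_\bullet}$-module of lowest weight $w_\bullet\lambda$; by the compatibility of global crystal bases with the Levi subalgebra $\mathbf{U}_{I_\bullet}$, the restriction to it of the global crystal basis of $V(\lambda)$ is precisely $\{G(b')\mid b'\in C_{I_\bullet}(b_\lambda)\}$, and the map $u\mapsto uv_{w_\bullet\lambda}$ on $\mathbf{U}_{I_\bullet}^+$ is the $I_\bullet$-incarnation of $\pi_{I_\bullet;w_\bullet\lambda}$ in the sense of Subsection~\ref{subsec: V(pm lm)}. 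Granting this, when $\pi(b)\neq 0$ we have $b\in\mathcal{B}_{I_\bullet}(-\infty;w_\bullet\lambda)$ and $\iota(\pi(b))=b$, so $G(b)v_{w_\bullet\lambda}=G_{-\infty}(\iota(\pi(b)))v_{w_\bullet\lambda}=G(\pi(b))$ by the formula recorded in Subsection~\ref{subsec: V(pm lm)}; and when $\pi(b)=0$, the element $G(b)$ lies in the kernel of $u\mapsto uv_{w_\bullet\lambda}$, which is spanned by those $G_{-\infty}(b'')$ with $\pi(b'')=0$, so $G(b)v_{w_\bullet\lambda}=0$. The rest here is pure bookkeeping with the relations in \eqref{eq: iota_dot lm}.

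For the second equality I would prove the more general statement that $G(c)\otimes v_\mu=G(c\otimes b_\mu)$ for every $c\in\mathcal{B}(\lambda)$, by verifying the three conditions characterizing the canonical basis element $G(c\otimes b_\mu)$ of $V(\lambda,\mu)$ (see Example~\ref{ex: based mods}). It lies in ${}_{\mathcal{A}}V(\lambda)\otimes_{\mathcal{A}}{}_{\mathcal{A}}V(\mu)={}_{\mathcal{A}}V(\lambda,\mu)$ and in $\mathcal{L}(\lambda)\otimes\mathcal{L}(\mu)=\mathcal{L}(\lambda,\mu)$ with $\mathrm{ev}_\infty(G(c)\otimes v_\mu)=c\otimes b_\mu$, since $G(c)$ and $v_\mu$ have these properties in $V(\lambda)$ and $V(\mu)$. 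For bar-invariance, recall that the bar-involution of $V(\lambda,\mu)$ is obtained from $\overline{\cdot}\otimes\overline{\cdot}$ by applying the quasi-$R$-matrix $\Theta=\sum_\nu\Theta_\nu$ \cite[Theorem 2.9]{BW16}; since $\overline{G(c)}=G(c)$ and $\overline{v_\mu}=v_\mu$, and since for $\nu\neq 0$ the component $\Theta_\nu$ acts on the second tensor factor through an element of $\mathbf{U}^+$ of weight $\nu$, which annihilates the highest weight vector $v_\mu$, only $\Theta_0=1\otimes 1$ survives, so $G(c)\otimes v_\mu$ is bar-invariant. By uniqueness of the canonical basis, $G(c)\otimes v_\mu=G(c\otimes b_\mu)$; taking $c=\pi(b)$ gives the second equality.

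Finally, for the ``in particular'' clause I set $b:=\iota(b')$ for $b'\in C_{I_\bullet}(b_\lambda)$, so that $\pi(b)=b'$; the two equalities then give $G(b'\otimes b_\mu)=G(b')\otimes v_\mu=G(\iota(b'))(v_{w_\bullet\lambda}\otimes v_\mu)$, and the last term lies in $V(w_\bullet\lambda,\mu)$ because that module is by definition the $\mathbf{U}$-submodule of $V(\lambda)\otimes V(\mu)$ generated by $v_{w_\bullet\lambda}\otimes v_\mu$ and $G(\iota(b'))\in\mathbf{U}_{I_\bullet}^+\subset\mathbf{U}$. I expect the only genuine difficulty to be the input used in the first step, namely the compatibility of the global crystal basis of $V(\lambda)$ with the subalgebra $\mathbf{U}_{I_\bullet}$ — that restricting it to the $\mathbf{U}_{I_\bullet}$-submodule generated by the extremal vector $v_{w_\bullet\lambda}$ recovers the global crystal basis of that submodule as an irreducible $\mathbf{U}_{I_\bullet}$-module. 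This is classical, but it is the one point that cannot be obtained by merely rearranging identities already in the text.
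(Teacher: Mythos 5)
Your proof is correct. The first equality and the ``in particular'' clause proceed essentially as in the paper, including the silent appeal to Levi-compatibility of global crystal bases needed for $G(b)v_{w_\bullet\lambda}=G(\pi(b))$ (the paper's own proof writes this identity without comment, so you are right to flag it as the one external input). Your route to the second equality, however, differs from the paper's. You prove the more general identity $G(c)\otimes v_\mu = G(c\otimes b_\mu)$ for every $c\in\mathcal{B}(\lambda)$ by verifying directly the three characterizing conditions for the canonical basis element, and you obtain bar-invariance from the structure of the quasi-$R$-matrix $\Theta$ (its components of nonzero weight annihilate vectors of the form $u\otimes v_\mu$). The paper instead argues only on the specific element, exploiting the already-established first equality: the left-hand side $G(b)(v_{w_\bullet\lambda}\otimes v_\mu)$ is bar-invariant and lies in ${}_{\mathcal{A}}V(w_\bullet\lambda,\mu)$, since $v_{w_\bullet\lambda}\otimes v_\mu$ is a canonical basis element of the based submodule $V(w_\bullet\lambda,\mu)$ (a fact taken from \cite{BW21}) and $G(b)$ is bar-invariant and integral; while the right-hand side $G(\pi(b))\otimes v_\mu$ evidently lies in $\mathcal{L}(\lambda,\mu)$ with $\mathrm{ev}_\infty$-image $\pi(b)\otimes b_\mu$; equating the two places the common vector in $\mathcal{L}\cap{}_{\mathcal{A}}\cap\overline{\mathcal{L}}$, whence it equals $G(\pi(b)\otimes b_\mu)$. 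Your version is more self-contained --- it rederives that $v_{w_\bullet\lambda}\otimes v_\mu$ is a canonical basis element rather than importing it, and it yields a cleaner general statement --- at the price of invoking $\Theta$ explicitly. The paper's argument stays entirely within the based-module formalism and never touches $\Theta$, but works only for the specific vectors arising here.
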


\begin{proof}
  We have
  $$
  G(b)(v_{w_\bullet\lambda} \otimes v_\mu) = G(b)v_{w_\bullet\lambda} \otimes v_\mu = G(\pi(b)) \otimes v_\mu.
  $$
  The left-hand side belongs to ${}_{\mathcal{A}} V(w_\bullet\lambda,\mu)$, and is bar-invariant.
  On the other hand, the right-hand side belongs to $\mathcal{L}(\lambda,\mu)$, and its image under $\mathrm{ev}_\infty$ is $\pi(b) \otimes b_\mu$.
  Therefore, we have $G(\pi(b)) \otimes v_\mu \in \mathcal{L}(w_\bullet\lambda,\mu) \cap {}_{\mathcal{A}} V(w_\bullet\lambda,\mu) \cap \overline{\mathcal{L}(w_\bullet\lambda,\mu)}$, and consequently,
  $$
  G(\pi(b)) \otimes v_\mu = (G \circ \mathrm{ev}_\infty)(G(\pi(b)) \otimes v_\mu) = G(\pi(b) \otimes b_\mu),
  $$
  as desired.
\end{proof}

\begin{prop}\label{prop: hwe in B(w_dot lm, mu)}
Let $\lambda, \mu \in X^+$ and $b \in \mathcal{B}(w_\bullet \lambda, \mu)$.
Then, $b$ is a highest weight element if and only if $b = b_1 \otimes b_\mu$ for some $b_1 \in C_{I_\bullet}(b_\lambda)$ such that $\varepsilon_i(b_1) \leq \langle h_i,\mu \rangle$ for all $i \in I$.
\end{prop}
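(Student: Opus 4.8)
The plan is to reduce both implications to a statement about the tensor product crystal $\mathcal{B}(\lambda,\mu) = \mathcal{B}(\lambda) \otimes \mathcal{B}(\mu)$, and then to settle the harder direction by a weight estimate exploiting the semisimplicity of $V(\lambda,\mu)$.

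First I would record that, since $V(w_\bullet\lambda,\mu)$ is a based submodule of $V(\lambda,\mu)$, the crystal basis $\mathcal{B}(w_\bullet\lambda,\mu)$ is a union of connected components of $\mathcal{B}(\lambda,\mu)$; hence the Kashiwara operators on $\mathcal{B}(w_\bullet\lambda,\mu)$ are the restrictions of those on $\mathcal{B}(\lambda,\mu)$, and an element of $\mathcal{B}(w_\bullet\lambda,\mu)$ is a highest weight element there exactly when it is a highest weight element of $\mathcal{B}(\lambda,\mu)$. The crystals $\mathcal{B}(\lambda)$ and $\mathcal{B}(\mu)$ satisfy the seminormality hypothesis of Lemma \ref{lem: hw element in tensor crystal}, and $\mathcal{B}(\mu)$ is connected, so $\mathcal{B}(\mu)^{\mathrm{hi}} = \{b_\mu\}$; that lemma then shows that the highest weight elements of $\mathcal{B}(\lambda,\mu)$ are precisely the $b_1 \otimes b_\mu$ with $b_1 \in \mathcal{B}(\lambda)$ and $\varepsilon_i(b_1) \leq \langle h_i,\mu \rangle$ for all $i \in I$. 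So the proposition will follow once I show, for any such $b_1$, that $b_1 \otimes b_\mu \in \mathcal{B}(w_\bullet\lambda,\mu)$ if and only if $b_1 \in C_{I_\bullet}(b_\lambda)$.

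The ``if'' direction will be immediate from Lemma \ref{lem: G(b otimes b_mu)}: for $b_1 \in C_{I_\bullet}(b_\lambda)$ we get $G(b_1 \otimes b_\mu) = G(b_1) \otimes v_\mu \in V(w_\bullet\lambda,\mu)$, so $b_1 \otimes b_\mu \in \mathcal{B}(w_\bullet\lambda,\mu)$, and it is a highest weight element there because, by the previous paragraph, it is one in $\mathcal{B}(\lambda,\mu)$.

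The ``only if'' direction is where the real work lies. Suppose $b_1$ satisfies $\varepsilon_i(b_1) \leq \langle h_i,\mu \rangle$ for all $i$ and $b_1 \otimes b_\mu \in \mathcal{B}(w_\bullet\lambda,\mu)$; then $b_1 \otimes b_\mu$ is a highest weight element of $\mathcal{B}(\lambda,\mu)$, so $\xi := \operatorname{wt}(b_1) + \mu \in X^+$ and the associated highest weight vector $v_{b_1 \otimes b_\mu} = p_\xi\big(G(b_1 \otimes b_\mu)\big)$ is nonzero (cf.\ Subsection \ref{subsec: based U-mod}), where $p_\xi : V(\lambda,\mu) \to I_\xi(V(\lambda,\mu))$ denotes the projection onto the $V(\xi)$-isotypic component; this $p_\xi$ is a $\mathbf{U}$-module homomorphism because $V(\lambda,\mu)$ is semisimple. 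Since $G(b_1 \otimes b_\mu) \in V(w_\bullet\lambda,\mu) = \mathbf{U}(v_{w_\bullet\lambda} \otimes v_\mu)$, applying the $\mathbf{U}$-linear map $p_\xi$ yields $0 \neq p_\xi\big(G(b_1 \otimes b_\mu)\big) \in p_\xi\big(\mathbf{U}(v_{w_\bullet\lambda} \otimes v_\mu)\big) = \mathbf{U} \cdot p_\xi(v_{w_\bullet\lambda} \otimes v_\mu)$, forcing $p_\xi(v_{w_\bullet\lambda} \otimes v_\mu) \neq 0$. As $p_\xi$ preserves weights, $w_\bullet\lambda + \mu$ is a weight of $I_\xi(V(\lambda,\mu))$ (a direct sum of copies of $V(\xi)$), hence a weight of the highest weight module $V(\xi)$, which forces $\operatorname{wt}(b_1) - w_\bullet\lambda = \xi - (w_\bullet\lambda + \mu) \in \sum_{i \in I} \mathbb{Z}_{\geq 0}\alpha_i$, i.e.\ $\operatorname{wt}(b_1) \geq w_\bullet\lambda$; Lemma \ref{lem: characterization of C_dot(b_lm)} then gives $b_1 \in C_{I_\bullet}(b_\lambda)$. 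I expect the main obstacle to be isolating exactly this idea — detecting membership in the cyclic submodule $V(w_\bullet\lambda,\mu)$ by pairing against the isotypic projections of the semisimple module $V(\lambda,\mu)$ and extracting a weight inequality; once that is seen, the remaining inputs (a based submodule being a union of connected components, and the semisimplicity of $V(\lambda,\mu)$) are routine.
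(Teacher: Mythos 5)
Your proof is correct and follows essentially the same route as the paper: Lemma \ref{lem: hw element in tensor crystal} for the tensor-product crystal structure, Lemma \ref{lem: G(b otimes b_mu)} for the ``if'' direction, and Lemma \ref{lem: characterization of C_dot(b_lm)} plus the weight bound coming from cyclicity and semisimplicity for the ``only if'' direction. The only difference is that you spell out the weight inequality via the isotypic projection $p_\xi$ of the ambient module $V(\lambda,\mu)$, whereas the paper asserts in one line that $I_{\lambda'}(V(w_\bullet\lambda,\mu)) = 0$ unless $\lambda' \geq w_\bullet\lambda + \mu$ since $V(w_\bullet\lambda,\mu)$ is cyclic on a vector of that weight; these are the same argument.
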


\begin{proof}
  Let $b \in \mathcal{B}(w_\bullet \lambda, \mu)^{\text{hi}}$.
  Since the $\mathbf{U}$-module $V(w_\bullet \lambda,\mu)$ is generated by a global crystal basis element $v_{w_\bullet \lambda} \otimes v_\mu$ of weight $w_\bullet \lambda + \mu$, we have $I_{\lambda'}(V(w_\bullet \lambda, \mu)) = 0$ unless $\lambda' \geq w_\bullet \lambda + \mu$.
  Hence, we obtain $\operatorname{wt}(b) \geq w_\bullet \lambda+\mu$.
  Now, the ``only if'' part follows from Lemmas \ref{lem: hw element in tensor crystal} and \ref{lem: characterization of C_dot(b_lm)}.

  Let us prove the opposite direction.
  Let $b_1 \in C_{I_\bullet}(b_\lambda)$ be such that $\varepsilon_i(b) \leq \langle h_i,\mu \rangle$ for all $i \in I$.
  By Lemma \ref{lem: hw element in tensor crystal}, we have $b := b_1 \otimes b_\mu \in \mathcal{B}(\lambda,\mu)^{\text{hi}}$.
  By Lemma \ref{lem: G(b otimes b_mu)}, we see that $G(b) \in V(w_\bullet \lambda, \mu)$.
  Hence, we obtain
  $$
  b = \mathrm{ev}_\infty(G(b)) \in \mathcal{B}(w_\bullet\lambda,\mu).
  $$
  This completes the proof.
\end{proof}

For each $\lambda, \mu \in X^+$, set
\begin{align*}
  &C_{I_\bullet; \mu}(b_\lambda) := \{ b \in C_{I_\bullet}(b_\lambda) \mid \varepsilon_i(b) \leq \langle h_i,\mu \rangle \ \text{ for all } i \in I \}.
\end{align*}
Then, by Proposition \ref{prop: hwe in B(w_dot lm, mu)}, we obtain
\begin{align}
  \mathcal{B}(w_\bullet\lambda, \mu)^{\text{hi}} = \{ b \otimes b_\mu \mid b \in C_{I_\bullet;\mu}(b_\lambda) \}. \label{eq: hwe in B(w_dot lm, mu)}
\end{align}

For each $\nu \in X^+$, consider the compositions
\begin{align*}
  &\pi = \pi_{I_\bullet;w_\bullet\lambda,\nu}: C_{I_\bullet}(b_{\lambda+\tau\nu}) \xrightarrow{\iota_{I_\bullet;w_\bullet(\lambda+\tau\nu)}} \mathcal{B}_{I_\bullet}(-\infty; w_\bullet(\lambda+\tau\nu)) \xrightarrow{\pi_{I_\bullet;w_\bullet\lambda}} C_{I_\bullet}(b_\lambda) \sqcup \{0\}, \\
  &\iota = \iota_{I_\bullet;w_\bullet\lambda,\nu}: C_{I_\bullet}(b_\lambda) \xrightarrow{\iota_{I_\bullet; w_\bullet\lambda}} \mathcal{B}_{I_\bullet}(-\infty;w_\bullet\lambda) \xrightarrow{\pi_{I_\bullet; w_\bullet(\lambda+\tau\nu)}} C_{I_\bullet}(b_{\lambda+\tau\nu}).
\end{align*}
They satisfy the following (cf. equation \eqref{eq: iota_dot lm}):
\begin{align}
\begin{split}
  &\pi \circ \iota = \mathrm{id}, \\
  &\iota(b_{w_\bullet \lambda}) = b_{w_\bullet(\lambda + \tau\nu)}, \\
  &\varphi_j(\iota(b)) = \varphi_j(b), \\
  &\operatorname{wt}(\iota(b)) = \operatorname{wt}(b) + w_\bullet \tau\nu, \\
  &\tilde{E}_j \iota(b) = \iota(\tilde{E}_j b) \text{ if } \tilde{E}_j b \neq 0
\end{split} \label{eq: iota_dot lm nu}
\end{align}
for all $j \in I_\bullet$ and $b \in C_{I_\bullet}(b_\lambda)$.

\begin{lem}\label{lem: C_Idot(b_lm) and iota}
Let $\lambda,\mu,\nu \in X^+$ and $b \in C_{I_\bullet}(b_\lambda)$.
Set $\iota := \iota_{I_\bullet; w_\bullet\lambda,\nu}$.
Then, we have $\iota(b) \in C_{I_\bullet; \mu+\nu}(b_{\lambda+\tau\nu})$ if and only if $b \in C_{I_\bullet; \mu}(b_\lambda)$.
\end{lem}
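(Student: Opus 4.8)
The plan is to test the defining inequalities $\varepsilon_i(-)\le\langle h_i,-\rangle$ one index at a time, splitting $I=I_\bullet\sqcup I_\circ$: I will show that the inequalities indexed by $i\in I_\circ$ are automatically satisfied on both sides, while those indexed by $j\in I_\bullet$ are carried into one another by the combined effect of the shift $\mu\rightsquigarrow\mu+\nu$ and the weight shift built into $\iota$ in \eqref{eq: iota_dot lm nu}.

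\emph{Step 1 (the $I_\circ$-conditions are vacuous).} I first claim that for any $\lambda'\in X^+$ and any $b'\in C_{I_\bullet}(b_{\lambda'})$ one has $\varepsilon_i(b')=0$ for every $i\in I_\circ$. By the definition of $C_{I_\bullet}(b_{\lambda'})$, the element $\lambda'-\operatorname{wt}(b')$ is a $\mathbb{Z}_{\ge0}$-combination of $\{\alpha_j\mid j\in I_\bullet\}$; if $\tilde{E}_ib'\neq0$ for some $i\in I_\circ$, then $\tilde{E}_ib'$ still lies in $C_{I_\bullet}(b_{\lambda'})$ (by Lemma \ref{lem: characterization of C_dot(b_lm)}, since applying $\tilde{E}_i$ only raises the weight), so $\lambda'-\operatorname{wt}(b')-\alpha_i$ is again such a combination; subtracting forces $\alpha_i\in\sum_{j\in I_\bullet}\mathbb{Z}\alpha_j$, which is impossible by linear independence of $\Pi$ since $i\notin I_\bullet$. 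Applying this to $\lambda'=\lambda$ (for $b$) and to $\lambda'=\lambda+\tau\nu$ (for $\iota(b)$, which by construction of $\iota=\iota_{I_\bullet;w_\bullet\lambda,\nu}$ lies in $C_{I_\bullet}(b_{\lambda+\tau\nu})$), and recalling that $\langle h_i,\mu\rangle,\langle h_i,\mu+\nu\rangle\ge0$, we conclude that the conditions indexed by $i\in I_\circ$ hold for both $b$ and $\iota(b)$. Hence $b\in C_{I_\bullet;\mu}(b_\lambda)$ if and only if $\varepsilon_j(b)\le\langle h_j,\mu\rangle$ for all $j\in I_\bullet$, and likewise $\iota(b)\in C_{I_\bullet;\mu+\nu}(b_{\lambda+\tau\nu})$ if and only if $\varepsilon_j(\iota(b))\le\langle h_j,\mu+\nu\rangle$ for all $j\in I_\bullet$.

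\emph{Step 2 (the $I_\bullet$-conditions match up).} For $j\in I_\bullet$, since $\mathcal{B}(\lambda)$ and $\mathcal{B}(\lambda+\tau\nu)$ are crystal bases of integrable modules we have $\varepsilon_j=\varphi_j-\langle h_j,\operatorname{wt}(-)\rangle$; combining this with $\varphi_j(\iota(b))=\varphi_j(b)$ and $\operatorname{wt}(\iota(b))=\operatorname{wt}(b)+w_\bullet\tau\nu$ from \eqref{eq: iota_dot lm nu}, I obtain $\varepsilon_j(\iota(b))=\varepsilon_j(b)-\langle h_j,w_\bullet\tau\nu\rangle$. It then remains to evaluate $\langle h_j,w_\bullet\tau\nu\rangle$: using that $w_\bullet$ (the longest element of $W_{I_\bullet}$) is an involution with $w_\bullet h_j=-h_{\tau(j)}$ — the coroot form of the admissible-pair relation $w_\bullet\alpha_j=-\alpha_{\tau(j)}$ — together with the $\tau$-invariance $\langle\tau h,\tau\zeta\rangle=\langle h,\zeta\rangle$ of the pairing, one gets $\langle h_j,w_\bullet\tau\nu\rangle=\langle w_\bullet h_j,\tau\nu\rangle=-\langle h_{\tau(j)},\tau\nu\rangle=-\langle h_j,\nu\rangle$. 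Therefore $\varepsilon_j(\iota(b))=\varepsilon_j(b)+\langle h_j,\nu\rangle$, and since $\langle h_j,\mu+\nu\rangle=\langle h_j,\mu\rangle+\langle h_j,\nu\rangle$, the inequality $\varepsilon_j(\iota(b))\le\langle h_j,\mu+\nu\rangle$ is equivalent to $\varepsilon_j(b)\le\langle h_j,\mu\rangle$.

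Combining Steps 1 and 2 yields the asserted equivalence. I expect the only place requiring real care to be the weight identity $\langle h_j,w_\bullet\tau\nu\rangle=-\langle h_j,\nu\rangle$ in Step 2, which must be threaded carefully through the admissible-pair axioms and the compatibility of $\tau$ with $Y$, $X$ and $\langle,\rangle$; everything else is a direct reading of \eqref{eq: iota_dot lm nu} and Lemma \ref{lem: characterization of C_dot(b_lm)}.
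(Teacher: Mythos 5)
Your proof is correct and follows essentially the same route as the paper's: you split into $I_\circ$-indices, where $\varepsilon_i=0$ on $C_{I_\bullet}(b_{\lambda'})$ makes the inequality automatic on both sides, and $I_\bullet$-indices, where $\varepsilon_j(\iota(b))=\varepsilon_j(b)+\langle h_j,\nu\rangle$ is extracted from \eqref{eq: iota_dot lm nu} together with $w_\bullet h_j=-h_{\tau(j)}$ and $\tau$-invariance of the pairing. The only difference is that the paper records $\varepsilon_i(\iota(b))=0=\varepsilon_i(b)$ for $i\in I_\circ$ and $\langle h_j,w_\bullet\tau\nu\rangle=-\langle h_j,\nu\rangle$ without comment, while you justify both explicitly; the arguments otherwise coincide.
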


\begin{proof}
  By equation \eqref{eq: iota_dot lm nu}, we have
  $$
  \varepsilon_j(\iota(b)) = \varphi_j(\iota(b))-\langle h_j,\operatorname{wt}(\iota(b)) \rangle = \varphi_j(b)-\langle h_j,\operatorname{wt}(b)+w_\bullet\tau\nu \rangle = \varepsilon_j(b) + \langle h_j,\nu \rangle
  $$
  for all $j \in I_\bullet$, and
  $$
  \varepsilon_i(\iota(b)) = 0 = \varepsilon_i(b)
  $$
  for all $i \in I_\circ$.
  Now, the assertion follows from the definition of $C_{I_\bullet;\mu}(b_\lambda)$.
\end{proof}

\begin{lem}\label{lem: nu+wdottaunu is dom}
Let $\nu \in X^+$.
Then, we have $\nu+w_\bullet\tau\nu \in X^+$.
\end{lem}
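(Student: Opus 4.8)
The plan is to verify directly that $\langle h_i, \nu + w_\bullet\tau\nu \rangle \geq 0$ for every $i \in I$, treating the cases $i \in I_\bullet$ and $i \in I_\circ$ by different arguments. The inputs I will use are all available above or standard: the pairing $\langle,\rangle$ is invariant under $W$ and under $\tau$; $w_\bullet$ is an involution lying in $W_{I_\bullet}$ with $w_\bullet h_j = -h_{\tau(j)}$ for all $j \in I_\bullet$ (part of the definition of an admissible pair); $\tau^2 = \mathrm{id}$; and $\tau$ preserves dominance, so that $\tau\nu \in X^+$ whenever $\nu \in X^+$.

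First I would dispose of $i \in I_\bullet$. Using $W$-invariance of the pairing and $w_\bullet^{-1} = w_\bullet$, one has $\langle h_i, w_\bullet\tau\nu\rangle = \langle w_\bullet h_i, \tau\nu\rangle = \langle -h_{\tau(i)}, \tau\nu\rangle = -\langle h_{\tau(i)}, \tau\nu\rangle$, and then $\tau$-invariance of the pairing together with $\tau^2 = \mathrm{id}$ gives $\langle h_{\tau(i)}, \tau\nu\rangle = \langle h_i, \nu\rangle$. Hence $\langle h_i, \nu + w_\bullet\tau\nu\rangle = \langle h_i,\nu\rangle - \langle h_i,\nu\rangle = 0$, which is in particular $\geq 0$.

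For $i \in I_\circ$ I would instead use the fact that a dominant weight minus any $W_{I_\bullet}$-translate of it lies in $\sum_{j \in I_\bullet}\mathbb{Z}_{\geq 0}\alpha_j$ (combine $\tau\nu - w_\bullet\tau\nu \in \sum_{k \in I}\mathbb{Z}_{\geq 0}\alpha_k$ with $\tau\nu - w_\bullet\tau\nu \in \sum_{j \in I_\bullet}\mathbb{Z}\alpha_j$, using linear independence of the $\alpha_k$): this yields $\tau\nu - w_\bullet\tau\nu = \sum_{j \in I_\bullet} m_j\alpha_j$ with $m_j \geq 0$. Therefore
$$
\langle h_i, \nu + w_\bullet\tau\nu\rangle = \langle h_i, \nu\rangle + \langle h_i, \tau\nu\rangle - \sum_{j \in I_\bullet} m_j a_{i,j},
$$
and since $\nu, \tau\nu \in X^+$ the first two terms are $\geq 0$, while $a_{i,j} \leq 0$ for $i \in I_\circ$ and $j \in I_\bullet$ (off-diagonal entries of a generalized Cartan matrix are non-positive), so the last sum is $\geq 0$ as well. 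This gives the inequality for all $i$, and hence $\nu + w_\bullet\tau\nu \in X^+$.

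There is no serious obstacle here; the only points needing a little care are invoking the correct version of the ``dominant weight minus Weyl translate is a non-negative integral combination of simple roots'' statement and restricting it to $W_{I_\bullet}$, and noticing that the uniform estimate used for $i \in I_\circ$ genuinely fails when $i \in I_\bullet$ (because of the diagonal entry $a_{i,i} = 2$), which is precisely why the two cases are handled separately.
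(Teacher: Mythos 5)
Your proof is correct, and the case split over $i\in I_\bullet$ versus $i\in I_\circ$ mirrors the paper's. For $i\in I_\bullet$ your computation is essentially identical to the paper's (both reduce to $w_\bullet h_{\tau(i)}=-h_i$ and get exactly $0$). For $i\in I_\circ$ the paper takes a slightly more direct route: it rewrites $\langle h_i, w_\bullet\tau\nu\rangle = \langle w_\bullet h_{\tau(i)},\nu\rangle$ and observes that $w_\bullet h_{\tau(i)}$ is a \emph{positive coroot} (since $w_\bullet\in W_{I_\bullet}$ sends only $I_\bullet$-coroots to negative ones, and $\tau(i)\in I_\circ$), so the pairing against dominant $\nu$ is automatically $\geq 0$. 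You instead decompose $\tau\nu - w_\bullet\tau\nu$ as a non-negative integral combination of the $\alpha_j$, $j\in I_\bullet$, and then use $a_{i,j}\le 0$ for $i\in I_\circ$, $j\in I_\bullet$; this buys you the stronger estimate $\langle h_i,\nu+w_\bullet\tau\nu\rangle\ge\langle h_i,\nu\rangle+\langle h_i,\tau\nu\rangle$ at the cost of an extra lemma about Weyl translates of dominant weights. Both routes are sound; the paper's is marginally more economical because it never leaves the coroot side of the pairing.
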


\begin{proof}
  For each $i \in I$, we have
  $$
  \langle h_i, \nu+w_\bullet\tau\nu \rangle = \langle h_i,\nu \rangle + \langle w_\bullet h_{\tau(i)}, \nu \rangle.
  $$
  Now, the assertion follows from easy observation that $w_\bullet h_{\tau(i)} = -h_i$ if $i \in I_\bullet$ and that $w_\bullet h_{\tau(i)}$ is a positive coroot if $i \in I_\circ$.
\end{proof}

By Lemma \ref{lem: nu+wdottaunu is dom}, we can apply Lemma \ref{lem: V(nu) otimes V(wlm,mu) is based} to see that $V(\nu+w_\bullet\tau\nu) \otimes V(w_\bullet\lambda,\mu)$ is a based $\mathbf{U}$-submodule of $V(\nu+w_\bullet\tau\nu, \lambda,\mu)$.

\begin{lem}\label{lem: G(b_nu otimes b otimes b_mu)}
  Let $b \in \mathcal{B}_{I_\bullet}(-\infty)$, and set $\pi := \pi_{I_\bullet; w_\bullet\lambda}$.
  Then, we have
  $$
  G(b)(v_{\nu+w_\bullet\tau\nu} \otimes v_{w_\bullet\lambda} \otimes v_\mu) = v_{\nu+w_\bullet\tau\nu} \otimes G(\pi(b)) \otimes v_\mu = G(b_{\nu+w_\bullet\tau\nu} \otimes \pi(b) \otimes b_\mu).
  $$
  In particular, $G(b_{\nu+w_\bullet\tau\nu} \otimes b' \otimes b_\mu) = v_{\nu+w_\bullet\tau\nu} \otimes G(b') \otimes v_\nu$ for all $b' \in C_{I_\bullet}(b_\lambda)$.
\end{lem}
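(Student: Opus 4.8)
The plan is to mirror the proof of Lemma~\ref{lem: G(b otimes b_mu)}, the one new ingredient being that $v_{\nu+w_\bullet\tau\nu}$ spans a trivial $\mathbf{U}_{I_\bullet}$-submodule of $V(\nu+w_\bullet\tau\nu)$. Write $\xi := \nu+w_\bullet\tau\nu \in X^+$ (Lemma~\ref{lem: nu+wdottaunu is dom}). As in the proof of that lemma, $w_\bullet h_{\tau(j)} = -h_j$ for $j \in I_\bullet$, so $\langle h_j,\xi \rangle = \langle h_j,\nu \rangle + \langle w_\bullet h_{\tau(j)},\nu \rangle = 0$; hence $K_j v_\xi = v_\xi$ for all $j \in I_\bullet$. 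Since $v_\xi$ is the highest weight vector of $V(\xi)$ it is killed by all $E_i$, and being of weight $0$ in each direction $j \in I_\bullet$ it is also killed by $F_j$ for $j \in I_\bullet$. In other words, $\mathbf{U}_{I_\bullet}$ acts on $\mathbb{Q}(q) v_\xi$ through the counit.

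For the first equality, recall $G(b) \in \mathbf{U}^+_{I_\bullet}$. From $\Delta(E_j) = E_j \otimes 1 + K_j \otimes E_j$ together with $E_j v_\xi = 0$ and $K_j v_\xi = v_\xi$ for $j \in I_\bullet$, an induction on monomials in the $E_j$ gives $G(b)(v_\xi \otimes y) = v_\xi \otimes G(b) y$ for every $y$ in an integrable $\mathbf{U}$-module. Taking $y = v_{w_\bullet\lambda} \otimes v_\mu$ and applying Lemma~\ref{lem: G(b otimes b_mu)}, which identifies $G(b)(v_{w_\bullet\lambda} \otimes v_\mu)$ with $G(\pi(b)) \otimes v_\mu = G(\pi(b) \otimes b_\mu)$, yields
\[
  G(b)(v_\xi \otimes v_{w_\bullet\lambda} \otimes v_\mu) = v_\xi \otimes G(\pi(b)) \otimes v_\mu = v_\xi \otimes G(\pi(b) \otimes b_\mu),
\]
which gives the first equality together with the middle expression of the asserted chain.

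It then remains to show $v_\xi \otimes G(\pi(b)) \otimes v_\mu = G(b_\xi \otimes \pi(b) \otimes b_\mu)$ in $V(\xi,\lambda,\mu)$. By Lemma~\ref{lem: V(nu) otimes V(wlm,mu) is based} (applicable since $\xi \in X^+$), $P := V(\xi) \otimes V(w_\bullet\lambda,\mu)$ is a based submodule of $V(\xi,\lambda,\mu)$, so it suffices to check that $v_\xi \otimes G(\pi(b)) \otimes v_\mu$ lies in $\mathcal{L}_P \cap {}_{\mathcal{A}}P \cap \overline{\mathcal{L}_P}$ and has $\mathrm{ev}_\infty$-image $b_\xi \otimes \pi(b) \otimes b_\mu$. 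Lattice membership and the $\mathrm{ev}_\infty$-image are immediate from $\mathcal{L}(\xi,\lambda,\mu) = \mathcal{L}(\xi) \otimes \mathcal{L}(\lambda) \otimes \mathcal{L}(\mu)$ and the fact that the vector lies in $P$ (being $G(b)$ applied to $v_\xi \otimes v_{w_\bullet\lambda} \otimes v_\mu \in P$); the $\mathcal{A}$-integrality follows exactly as in Lemma~\ref{lem: G(b otimes b_mu)}, noting that $v_\xi \otimes v_{w_\bullet\lambda} \otimes v_\mu = G(b_\xi) \otimes G(b_{w_\bullet\lambda} \otimes b_\mu)$ lies in ${}_{\mathcal{A}}P$ by the triangularity of the construction of \cite{BW16} and that $G(b)$ preserves the $\mathcal{A}$-form.

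The remaining --- and crucial --- point is bar-invariance: I claim $v_\xi \otimes v_{w_\bullet\lambda} \otimes v_\mu$ is bar-invariant in $V(\xi,\lambda,\mu)$, whence so is its image under the bar-invariant operator $G(b)$. Viewing $V(\xi,\lambda,\mu) = V(\xi) \otimes V(\lambda,\mu)$ and using that $v_{w_\bullet\lambda} \otimes v_\mu$ is already bar-invariant in $V(\lambda,\mu)$ (Lemma~\ref{lem: G(b otimes b_mu)}), the bar-involution sends $v_\xi \otimes (v_{w_\bullet\lambda} \otimes v_\mu)$ to $\Theta(v_\xi \otimes (v_{w_\bullet\lambda} \otimes v_\mu))$, where $\Theta = \sum_\eta \Theta_\eta$ is the quasi-$R$-matrix, $\Theta_0 = 1$ and $\Theta_\eta \in \mathbf{U}^-_{-\eta} \otimes \mathbf{U}^+_\eta$ for $\eta \neq 0$. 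Because $E_i v_\mu = 0$, the factor $\mathbf{U}^+_\eta$ acts on $v_{w_\bullet\lambda} \otimes v_\mu$ through the $V(\lambda)$-slot, producing $(\mathbf{U}^+_\eta v_{w_\bullet\lambda}) \otimes v_\mu$; a weight count using $\lambda - w_\bullet\lambda \in \sum_{j \in I_\bullet} \mathbb{Z}_{\geq 0}\alpha_j$ shows this vanishes unless $\eta \in \sum_{j \in I_\bullet} \mathbb{Z}_{\geq 0}\alpha_j$, in which case $\mathbf{U}^-_{-\eta} \subseteq \mathbf{U}^-_{I_\bullet}$ annihilates $v_\xi$ whenever $\eta \neq 0$. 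Hence only $\Theta_0$ contributes and $v_\xi \otimes v_{w_\bullet\lambda} \otimes v_\mu$ is bar-invariant. The ``in particular'' statement then follows by choosing, for a given $b' \in C_{I_\bullet}(b_\lambda)$, any $b \in \mathcal{B}_{I_\bullet}(-\infty)$ with $\pi(b) = b'$. I expect this bar-invariance step to be the main obstacle, as it is the only place where the hypothesis $\xi = \nu + w_\bullet\tau\nu$ --- equivalently, the $I_\bullet$-triviality of $v_\xi$ --- is genuinely used; the rest is a routine adaptation of Lemma~\ref{lem: G(b otimes b_mu)}.
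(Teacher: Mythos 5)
Your proof is correct and follows essentially the same route the paper intends: the paper records exactly the two facts you isolate — $E_j v_{\nu+w_\bullet\tau\nu} = 0$ and $\langle h_j, \nu+w_\bullet\tau\nu \rangle = 0$ for $j \in I_\bullet$ — and then invokes the argument of Lemma \ref{lem: G(b otimes b_mu)}, which you have expanded in full (including the bar-invariance via the quasi-$R$-matrix, which the paper leaves implicit). One small inaccuracy in your closing remark: bar-invariance is not the only place the $I_\bullet$-triviality of $v_{\nu+w_\bullet\tau\nu}$ is used — the identity $K_j v_{\nu+w_\bullet\tau\nu} = v_{\nu+w_\bullet\tau\nu}$ for $j \in I_\bullet$ is also needed to pull $G(b)$ past the first tensor slot in the very first step — but this does not affect the validity of the proof.
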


\begin{proof}
  Noting that
  $$
  E_j v_{\nu+w_\bullet\tau\nu} = 0, \quad \langle h_j, \nu+w_\bullet\tau\nu \rangle = 0
  $$
  for all $j \in I_\bullet$, we see that the same argument as the proof of Lemma \ref{lem: G(b otimes b_mu)} proves the assertion.
\end{proof}

\begin{prop}\label{prop: existence of based U-hom}
Let $\lambda,\mu,\nu \in X^+$ and set $\iota := \iota_{I_\bullet; w_\bullet\lambda, \nu}$ and $\pi := \pi_{I_\bullet; w_\bullet\lambda, \nu}$.
Then, there exists a unique based $\mathbf{U}$-module homomorphism
$$
f: V(w_\bullet(\lambda+\tau\nu), \mu+\nu) \rightarrow V(\nu + w_\bullet\tau\nu) \otimes V(w_\bullet \lambda, \mu)
$$
such that
$$
f(G(b \otimes b_{\mu+\nu})) = G(b_{\nu+w_\bullet\tau\nu} \otimes \pi(b) \otimes b_\mu)
$$
for all $b \in C_{I_\bullet;\mu+\nu}(b_{\lambda+\tau\nu})$.
\end{prop}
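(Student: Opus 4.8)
The plan is to obtain $f$ from Proposition~\ref{prop: suff cond to be based hom}, applied to $M:=V(w_\bullet(\lambda+\tau\nu),\mu+\nu)$ and $N:=V(\nu+w_\bullet\tau\nu)\otimes V(w_\bullet\lambda,\mu)$; both lie in $\mathcal{C}^+$, being based submodules of $V(\lambda+\tau\nu,\mu+\nu)$ and (by Lemma~\ref{lem: V(nu) otimes V(wlm,mu) is based}, since $\nu+w_\bullet\tau\nu\in X^+$ by Lemma~\ref{lem: nu+wdottaunu is dom}) of $V(\nu+w_\bullet\tau\nu,\lambda,\mu)$. First I need a strict crystal morphism $\phi\colon\mathcal{B}_M\to\mathcal{B}_N$. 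By \eqref{eq: hwe in B(w_dot lm, mu)}, $\mathcal{B}_M^{\mathrm{hi}}=\{b\otimes b_{\mu+\nu}\mid b\in C_{I_\bullet;\mu+\nu}(b_{\lambda+\tau\nu})\}$, and each connected component of $\mathcal{B}_M$ is a highest weight crystal isomorphic to $\mathcal{B}(\operatorname{wt} c)$ for its highest weight element $c$; hence a strict crystal morphism may be prescribed freely on $\mathcal{B}_M^{\mathrm{hi}}$, subject to sending each $c$ to a highest weight element of weight $\operatorname{wt} c$ or to $0$. I set $\phi(b\otimes b_{\mu+\nu}):=b_{\nu+w_\bullet\tau\nu}\otimes\pi(b)\otimes b_\mu$. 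This is legitimate: by Lemma~\ref{lem: C_Idot(b_lm) and iota}, $\pi(b)\in C_{I_\bullet;\mu}(b_\lambda)\sqcup\{0\}$, and when $\pi(b)\neq0$, Proposition~\ref{prop: hwe in B(w_dot lm, mu)} gives $\pi(b)\otimes b_\mu\in\mathcal{B}(w_\bullet\lambda,\mu)^{\mathrm{hi}}$ of dominant weight, whence (since $\varepsilon_k(b_{\nu+w_\bullet\tau\nu})=0$ for all $k$) Lemma~\ref{lem: hw element in tensor crystal} yields $b_{\nu+w_\bullet\tau\nu}\otimes\pi(b)\otimes b_\mu\in\mathcal{B}_N^{\mathrm{hi}}$; and by \eqref{eq: iota_dot lm nu}, $\operatorname{wt}(\pi(b))=\operatorname{wt}(b)-w_\bullet\tau\nu$, so the two highest weights match. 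The injectivity condition of Proposition~\ref{prop: suff cond to be based hom} is also immediate: if $\phi(b_1\otimes b_{\mu+\nu})=\phi(b_2\otimes b_{\mu+\nu})\neq0$ then $\pi(b_1)=\pi(b_2)\neq0$, and since $\iota\circ\pi=\mathrm{id}$ wherever $\pi\neq0$ (a consequence of \eqref{eq: iota_dot lm nu} and the construction of $\iota,\pi$), $b_1=\iota(\pi(b_1))=\iota(\pi(b_2))=b_2$.

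Let $f$ be the homomorphism attached to $\phi$ by Lemma~\ref{lem: can lifting of cry morph}. The crux is to verify $f(E_iG_M(c))=E_iG_N(\phi(c))$ for all $i\in I$ and all highest weight $c=b\otimes b_{\mu+\nu}$. By Lemmas~\ref{lem: G(b otimes b_mu)} and~\ref{lem: G(b_nu otimes b otimes b_mu)}, $G_M(c)=G(b)\otimes v_{\mu+\nu}$ and $G_N(\phi(c))=v_{\nu+w_\bullet\tau\nu}\otimes G(\pi(b))\otimes v_\mu$ (the $G(\pi(b))$-slot being $0$ when $\pi(b)=0$). For $i\in I_\circ$ both sides vanish: $G(b)\in\mathbf{U}_{I_\bullet}^-v_{\lambda+\tau\nu}$ and $G(\pi(b))\in\mathbf{U}_{I_\bullet}^-v_\lambda$, so $E_i$ commutes past the $F_j$ ($j\in I_\bullet$) and annihilates both, while $E_i$ kills $v_{\mu+\nu}$, $v_\mu$ and $v_{\nu+w_\bullet\tau\nu}$. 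The case $i\in I_\bullet$ is the serious one. Here one exploits that $v_{\nu+w_\bullet\tau\nu}$ is killed by $E_j$, $F_j$ and $K_j-1$ for every $j\in I_\bullet$ (its weight pairs to $0$ with all $h_j$, $j\in I_\bullet$, and it is highest). Therefore the subspaces $W_M:=(\mathbf{U}_{I_\bullet}v_{\lambda+\tau\nu})\otimes v_{\mu+\nu}$ of $M$ and $W_N:=v_{\nu+w_\bullet\tau\nu}\otimes(\mathbf{U}_{I_\bullet}v_\lambda)\otimes v_\mu$ of $N$ are stable under the subalgebra $\mathbf{U}^{\geq0}$ generated by all $E_i$ and $K_h$, and each is cyclic over $\mathbf{U}^{\geq0}$, generated by $w_M:=v_{w_\bullet(\lambda+\tau\nu)}\otimes v_{\mu+\nu}$, resp. $w_N:=v_{\nu+w_\bullet\tau\nu}\otimes v_{w_\bullet\lambda}\otimes v_\mu$. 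Because $w_\bullet(\lambda+\tau\nu)=w_\bullet\lambda+w_\bullet\tau\nu$ with $w_\bullet\tau\nu$ anti-dominant for $I_\bullet$, the lowest weight $\mathbf{U}_{I_\bullet}$-module $\mathbf{U}_{I_\bullet}v_{\lambda+\tau\nu}=\mathbf{U}_{I_\bullet}^+v_{w_\bullet(\lambda+\tau\nu)}$ dominates $\mathbf{U}_{I_\bullet}v_\lambda=\mathbf{U}_{I_\bullet}^+v_{w_\bullet\lambda}$, so the left annihilator of $v_{w_\bullet(\lambda+\tau\nu)}$ in $\mathbf{U}_{I_\bullet}^+$ is contained in that of $v_{w_\bullet\lambda}$; hence there is a $\mathbf{U}^{\geq0}$-linear surjection $\tilde g\colon W_M\to W_N$, $\mathcal{E}v_{w_\bullet(\lambda+\tau\nu)}\otimes v_{\mu+\nu}\mapsto v_{\nu+w_\bullet\tau\nu}\otimes\mathcal{E}v_{w_\bullet\lambda}\otimes v_\mu$ ($\mathcal{E}\in\mathbf{U}_{I_\bullet}^+$). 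Writing $G(b)=\mathcal{E}_bv_{w_\bullet(\lambda+\tau\nu)}$ with $\mathcal{E}_b$ a global basis element of $\mathbf{U}_{I_\bullet}^+$ and using $\iota_{I_\bullet;w_\bullet\lambda}(\pi(b))=\iota_{I_\bullet;w_\bullet(\lambda+\tau\nu)}(b)$ when $\pi(b)\neq0$, one gets $\mathcal{E}_bv_{w_\bullet\lambda}=G(\pi(b))$ (or $0$), so $\tilde g(G(b)\otimes v_{\mu+\nu})=v_{\nu+w_\bullet\tau\nu}\otimes G(\pi(b))\otimes v_\mu=G_N(\phi(c))$. Granting $f|_{W_M}=\tilde g$, the identity for $i\in I_\bullet$ follows, since $E_iG_M(c)=E_iG(b)\otimes v_{\mu+\nu}\in W_M$ and $f$ is $\mathbf{U}^{\geq0}$-linear: $f(E_iG_M(c))=\tilde g(E_i\cdot(G(b)\otimes v_{\mu+\nu}))=E_i\cdot\tilde g(G(b)\otimes v_{\mu+\nu})=E_iG_N(\phi(c))$.

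The main obstacle is the identity $f|_{W_M}=\tilde g$. Since both maps are $\mathbf{U}^{\geq0}$-linear and $W_M=\mathbf{U}^{\geq0}w_M$, it suffices to prove $f(w_M)=w_N$. By Lemma~\ref{lem: can lifting of cry morph} one already knows $f(w_M)\in\mathcal{L}_N\cap\overline{\mathcal{L}_N}$ with $\mathrm{ev}_\infty(f(w_M))=\phi(\mathrm{ev}_\infty(w_M))=\phi(b_{w_\bullet(\lambda+\tau\nu)}\otimes b_{\mu+\nu})$; it then remains (i) to identify this crystal element with $\mathrm{ev}_\infty(w_N)=b_{\nu+w_\bullet\tau\nu}\otimes b_{w_\bullet\lambda}\otimes b_\mu$ — which one does by carrying $b_{w_\bullet(\lambda+\tau\nu)}\otimes b_{\mu+\nu}$ up to the highest weight element of its component (a sequence of $\tilde E_j$, $j\in I_\bullet$, acting in the first tensor factor), applying $\phi$ there, and bringing the resulting $\tilde F_j$'s back down, using that $\pi$ is compatible with these moves via the $\varepsilon_j$-shift recorded in \eqref{eq: iota_dot lm nu} and that $b_{\nu+w_\bullet\tau\nu}\otimes(-)\otimes b_\mu$ intertwines the $I_\bullet$-crystal structures — and (ii) to establish the integrality $f(w_M)\in{}_{\mathcal{A}}N$, after which $f(w_M)=G_N(\mathrm{ev}_\infty(w_N))=w_N$ by Lemma~\ref{lem: G(b_nu otimes b otimes b_mu)}. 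Once the hypotheses of Proposition~\ref{prop: suff cond to be based hom} are in place, $f$ is based and $f(G_M(b'))=G_N(\phi(b'))$ for all $b'\in\mathcal{B}_M$; taking $b'=b\otimes b_{\mu+\nu}$ with $b\in C_{I_\bullet;\mu+\nu}(b_{\lambda+\tau\nu})$ and invoking Lemmas~\ref{lem: G(b otimes b_mu)} and~\ref{lem: G(b_nu otimes b otimes b_mu)} gives the asserted formula. Uniqueness holds because these elements $G_M(b\otimes b_{\mu+\nu})$ are precisely the $G_M(c)$, $c\in\mathcal{B}_M^{\mathrm{hi}}$, and such elements generate $M$ as a $\mathbf{U}$-module.
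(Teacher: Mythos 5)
Your proposal is correct, but it takes a genuinely different route from the paper. The paper verifies the hypothesis $f(E_iG_M(c))=E_iG_N(\phi(c))$ of Proposition~\ref{prop: suff cond to be based hom} by an induction on $D(c)$: it expands $E_i G_{I_\bullet,-\infty}(\iota_{I_\bullet;w_\bullet(\lambda+\tau\nu)}(b_1))$ in global basis elements of $\mathbf{U}_{I_\bullet}^+$, pushes both sides through $\pi_{I_\bullet;w_\bullet(\lambda+\tau\nu)}$ and $\pi_{I_\bullet;w_\bullet\lambda}$, and reduces the comparison to a crystal-theoretic identity $\phi(\pi'(b')\otimes b_{\mu+\nu})=b_{\nu+w_\bullet\tau\nu}\otimes\pi''(b')\otimes b_\mu$ verified via the tensor product rule and \eqref{eq: vep and vphi for b_2^k}. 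You instead observe that $W_M=(\mathbf{U}_{I_\bullet}v_{\lambda+\tau\nu})\otimes v_{\mu+\nu}$ and $W_N=v_{\nu+w_\bullet\tau\nu}\otimes(\mathbf{U}_{I_\bullet}v_\lambda)\otimes v_\mu$ are cyclic $\mathbf{U}^{\geq0}$-submodules (using $E_jv_{\nu+w_\bullet\tau\nu}=0$, $\langle h_j,\nu+w_\bullet\tau\nu\rangle=0$ for $j\in I_\bullet$), build the explicit $\mathbf{U}^{\geq 0}$-linear map $\tilde g$ via the inclusion of annihilators, and show $f|_{W_M}=\tilde g$ by matching at the cyclic generator; the key identity then drops out in one line, avoiding the induction. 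This is a cleaner mechanism and makes visible why the highest-weight vectors $v_{\nu+w_\bullet\tau\nu}$ ``decouple'' for $I_\bullet$. One remark on economy: your ``main obstacle'' paragraph is unnecessary. Since $b_0:=b_{w_\bullet(\lambda+\tau\nu)}\otimes b_{\mu+\nu}$ is already in $\mathcal{B}_M^{\mathrm{hi}}$, with $v_{b_0}=G_M(b_0)=w_M$ (by Lemma~\ref{lem: G(b otimes b_mu)}, and $w_M$ is itself a highest weight vector so $p_{\operatorname{wt}(b_0)}$ fixes it) and likewise $v_{\phi(b_0)}=w_N$ (by Lemma~\ref{lem: G(b_nu otimes b otimes b_mu)}), the equality $f(w_M)=w_N$ is literally the defining property of $f$ from Lemma~\ref{lem: can lifting of cry morph}; no identification of crystal elements by moves and no separate integrality check is required. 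With that simplification your argument is complete and, in my view, streamlines the paper's proof.
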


\begin{proof}
  First, we observe that there exists a strict crystal morphism
  $$
  \phi: \mathcal{B}(w_\bullet(\lambda+\tau\nu), \mu+\nu) \rightarrow \mathcal{B}(\nu+w_\bullet\tau\nu) \otimes \mathcal{B}(w_\bullet\lambda,\mu)
  $$
  such that
  $$
  \phi(b \otimes b_{\mu+\nu}) = b_{\nu+w_\bullet\tau\nu} \otimes \pi(b) \otimes b_\mu
  $$
  for all $b \in C_{I_\bullet;\mu+\nu}(b_{\lambda+\tau\nu})$.
  This follows from equation \eqref{eq: hwe in B(w_dot lm, mu)}, and that $b \otimes b_{\mu+\nu}$ is a highest weight element of weight $\operatorname{wt}(b) + \mu+\nu$ and $b_{\nu+w_\bullet\tau\nu} \otimes \pi(b) \otimes b_\mu$ is either $0$ or a highest weight element of weight $\operatorname{wt}(b)+\mu+\nu$ (see Lemma \ref{lem: C_Idot(b_lm) and iota}) for all $b \in C_{I_\bullet; \mu+\nu}(b_\lambda)$.

  Let $f: V(w_\bullet(\lambda+\tau\nu), \mu+\nu) \rightarrow V(\nu+w_\bullet\tau\nu) \otimes V(w_\bullet\lambda,\mu)$ denote the $\mathbf{U}$-module homomorphism in Lemma \ref{lem: can lifting of cry morph}.
  By Proposition \ref{prop: suff cond to be based hom}, in order to prove the assertion, it suffices to show that
  $$
  f(E_i G(b)) = E_i G(\phi(b))
  $$
  for all $i \in I$ and $b \in \mathcal{B}(w_\bullet(\lambda+\tau\nu), \mu+\nu)^{\text{hi}}$ by induction on $D(b)$ (see the proof of Proposition \ref{prop: suff cond to be based hom} for the definition of $D(b)$).
  By weight consideration, we see that $E_iG(b) = 0 = E_iG(\phi(b))$ if $i \in I_\circ$.
  Hence, we only need to consider the case when $i \in I_\bullet$.

  Assume that our claim is true for all $b'$ with $D(b') < D(b)$; note that we assume nothing when $D(b) = 0$.
  Set $\zeta := \operatorname{wt}(b)$.
  Then, by Proposition \ref{prop: suff cond to be based hom}, $f$ restricts to a based $\mathbf{U}$-module homomorphism
  $$
  V(w_\bullet(\lambda+\tau\nu), \mu+\nu)[> \zeta] \rightarrow V(\nu+w_\bullet\tau\nu) \otimes V(w_\bullet\lambda,\mu).
  $$
  Let $b_1 \in C_{I_\bullet;\mu+\nu}(b_{\lambda+\tau\nu})$ be such that $b = b_1 \otimes b_{\mu+\nu}$.
  Set $\tilde{b}_1 := \iota_{I_\bullet;w_\bullet(\lambda+\tau\nu)}(b_1)$.
  Let us write
  $$
  E_i G(\tilde{b}_1) = \sum_{b' \in \mathcal{B}_{I_\bullet}(-\infty)} c_{b'} G(b')
  $$
  for some $c_{b'} \in \mathcal{A}$.  
  Then, using Lemma \ref{lem: G(b otimes b_mu)}, we compute as
  \begin{align*}
    E_i G(b) &= E_iG(b_1 \otimes b_{\mu+\nu}) \\
    &= E_i G(\tilde{b}_1) (v_{w_\bullet(\lambda+\tau\nu)} \otimes v_{\mu+\nu}) \\
    &= \sum_{b'} c_{b'}G(b') (v_{w_\bullet(\lambda+\tau\nu)} \otimes v_{\mu+\nu}) \\
    &= \sum_{b'} c_{b'} G(\pi'(b') \otimes b_{\mu+\nu}),
  \end{align*}
  where $\pi' := \pi_{I_\bullet; w_\bullet(\lambda+\tau\nu)}$.
  Since $E_i G(b) \in V(w_\bullet(\lambda+\tau\nu), \mu+\nu)[> \zeta]$, we have $c_{b'} = 0$ unless $\pi'(b') \otimes b_{\mu+\nu} \in \mathcal{B}(w_\bullet(\lambda+\tau\nu), \mu+\nu)[> \zeta]$.
  By our induction hypothesis, this implies that
  $$
  f(E_i G(b)) = \sum_{b'} c_{b'} G(\phi(\pi'(b') \otimes b_{\mu+\nu})).
  $$

  On the other hand, using Lemma \ref{lem: G(b_nu otimes b otimes b_mu)}, we compute as
  \begin{align*}
    E_i G(\phi(b)) &= E_iG(b_{\nu+w_\bullet\tau\nu} \otimes \pi(b_1) \otimes b_\mu) \\
    &= E_i G(\tilde{b}_1) (v_{\nu+w_\bullet\tau\nu} \otimes v_{w_\bullet\lambda} \otimes v_\mu) \\
    &= \sum_{b'} c_{b'} G(b') (v_{\nu+w_\bullet\tau\nu} \otimes v_{w_\bullet\lambda} \otimes v_\mu) \\
    &= \sum_{b'} c_{b'} G( b_{\nu+w_\bullet\tau\nu} \otimes \pi''(b') \otimes b_\mu),
  \end{align*}
  where $\pi'' := \pi_{I_\bullet; w_\bullet\lambda}$.
  Hence, in order to complete the proof, it suffices to show that $\phi(\pi'(b') \otimes b_{\mu+\nu}) = b_{\nu+w_\bullet\tau\nu} \otimes \pi''(b') \otimes b_\mu$ for all $b'$ with $c_{b'} \neq 0$.

  Let $b' \in \mathcal{B}_{I_\bullet}(-\infty)$ be such that $c_{b'} \neq 0$.
  Suppose first that $\pi'(b') = 0$.
  Then, we have $\pi''(b') = \pi(\pi'(b')) = 0$, and hence,
  $$
  \phi(\pi'(b') \otimes b_{\mu+\nu}) = 0 = b_{\nu+w_\bullet\tau\nu} \otimes \pi''(b') \otimes b_\mu,
  $$
  as desired.
  Next, suppose that $\pi'(b') \neq 0$.
  Then, there exists $b_2 \in C_{I_\bullet;\mu+\nu}(b_{\lambda+\tau\nu})$ such that $\pi'(b') \otimes b_\mu \in C(b_2 \otimes b_{\mu+\nu})$.
  This implies that there exist $j_1,\ldots,j_r \in I_\bullet$ such that
  $\tilde{F}_{j_r} \cdots \tilde{F}_{j_1}(b_2 \otimes b_{\mu+\nu}) = \pi'(b') \otimes b_{\mu+\nu}$.
  Moreover, we must have
  \begin{align}
    \varepsilon_{j_{k+1}}(b_2^k) \geq \langle h_{j_{k+1}}, \mu+\nu \rangle,\ \varphi_{j_{k+1}}(b_2^k) > 0 \label{eq: vep and vphi for b_2^k}
  \end{align}
  for all $k = 0,\ldots,r-1$, where
  $$
  b_2^0 := b_2,\ b_2^{k+1} := \tilde{F}_{j_{k+1}} b_2^k.
  $$
  Then, we compute as
  \begin{align*}
    \phi(\pi'(b') \otimes b_{\mu+\nu}) &= \tilde{F}_{j_r} \cdots \tilde{F}_{j_1} \phi(b_2 \otimes b_{\mu+\nu}) \\
    &= \tilde{F}_{j_r} \cdots \tilde{F}_{j_1} (b_{\nu+w_\bullet\tau\nu} \otimes  \pi(b_2) \otimes b_\mu) \\
    &= b_{\nu+w_\bullet\tau\nu} \otimes \pi(b_2^r) \otimes b_\mu.
  \end{align*}
  The last equality follows from the tensor product rule \eqref{eq: tensor crystal} and equations \eqref{eq: iota_dot lm nu} and \eqref{eq: vep and vphi for b_2^k}.
  Since
  $$
  \pi(b_2^r) = \pi(\pi'(b')) = \pi''(b'),
  $$
  our claim follows.
  Thus, the proof completes.
\end{proof}

\begin{cor}\label{cor: pii with deltai assumption}
Let $\lambda,\mu,\nu \in X^+$.
Assume that there exists a based $\mathbf{U}^\imath$-module homomorphism $\delta^\imath_\nu: V(\nu+w_\bullet\tau\nu) \rightarrow \mathbb{Q}(q)$ such that $\delta^\imath_\nu(v_{\nu+w_\bullet\tau\nu}) = 1$.
Then, there exists a based $\mathbf{U}^\imath$-module homomorphism $\pi^\imath_{\lambda,\mu,\nu}: V(w_\bullet(\lambda+\tau\nu), \mu+\nu) \rightarrow V(w_\bullet\lambda, \mu)$ such that $\pi^\imath_{\lambda,\mu,\nu}(v_{w_\bullet(\lambda+\tau\nu)} \otimes v_{\mu+\nu}) = v_{w_\bullet\lambda} \otimes v_\mu$.
\end{cor}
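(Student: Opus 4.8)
The plan is to factor $\pi^\imath_{\lambda,\mu,\nu}$ through the auxiliary tensor factor $V(\nu+w_\bullet\tau\nu)$, using the based $\mathbf{U}$-module homomorphism of Proposition~\ref{prop: existence of based U-hom} and then contracting that factor against $\delta^\imath_\nu$. Write $\nu':=\nu+w_\bullet\tau\nu$, which lies in $X^+$ by Lemma~\ref{lem: nu+wdottaunu is dom}, and let
\[
  f\colon V(w_\bullet(\lambda+\tau\nu),\mu+\nu)\longrightarrow V(\nu')\otimes V(w_\bullet\lambda,\mu)
\]
be the based $\mathbf{U}$-module homomorphism of Proposition~\ref{prop: existence of based U-hom}.

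First I would observe that $f$ is automatically a based $\mathbf{U}^\imath$-module homomorphism once source and target carry their $\imath$canonical bases. Indeed, $f$ is $\mathbf{U}$-linear, hence $\mathbf{U}^\imath$-linear; it preserves the $\mathcal{A}$-forms and the crystal lattices because it already does so over $\mathbf{U}$; and it intertwines the $\imath$bar-involutions because these differ from the $\mathbf{U}$-bar-involutions by left multiplication by the quasi-$K$-matrix, an element of a completion of $\mathbf{U}$ that commutes with the $\mathbf{U}$-linear map $f$. Together with the unitriangularity of the $\imath$canonical basis over the canonical basis, the four conditions of Lemma~\ref{lem: criterion of based ihom} follow; this is the $\imath$-analogue of the corresponding elementary fact over $\mathbf{U}$ and can be quoted from \cite{BW21}.

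Next I would construct the contraction. Set $F:=\delta^\imath_\nu\otimes\mathrm{id}_{V(w_\bullet\lambda,\mu)}$, that is,
\[
  F\colon V(\nu')\otimes V(w_\bullet\lambda,\mu)\longrightarrow \mathbb{Q}(q)\otimes V(w_\bullet\lambda,\mu)=V(w_\bullet\lambda,\mu),\qquad v\otimes w\mapsto\delta^\imath_\nu(v)\,w,
\]
where the source is a based $\mathbf{U}^\imath$-module by the lemma following Example~\ref{ex: based Ui-mod and hom}, applied with $\nu$ replaced by $\nu'$ (legitimate since $\nu'\in X^+$). Because $\mathbf{U}^\imath$ is a right coideal subalgebra, $\Delta(x)\in\mathbf{U}^\imath\otimes\mathbf{U}$ for $x\in\mathbf{U}^\imath$, so applying the $\mathbf{U}^\imath$-linear map $\delta^\imath_\nu$ to the first tensor leg commutes with the $\mathbf{U}^\imath$-action, and $F$ is $\mathbf{U}^\imath$-linear. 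That $F$ is based is the functoriality of the $\imath$canonical basis under tensoring a based $\mathbf{U}^\imath$-module homomorphism (here $\delta^\imath_\nu$) by a based $\mathbf{U}$-module (here $V(w_\bullet\lambda,\mu)$): the $\mathcal{A}$-form, crystal-lattice and $\imath$bar conditions of Lemma~\ref{lem: criterion of based ihom} are inherited from those for $\delta^\imath_\nu$ and the identity map, and the injectivity condition holds since $F$ merely deletes the $v_{\nu'}$-factor. (This too can be extracted from \cite{BW16,BW21}.)

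Finally I would set $\pi^\imath_{\lambda,\mu,\nu}:=F\circ f$; being a composite of based $\mathbf{U}^\imath$-module homomorphisms, it is again one. To pin down its value on the cyclic vector, note that $v_{w_\bullet(\lambda+\tau\nu)}\otimes v_{\mu+\nu}=G(b_{w_\bullet(\lambda+\tau\nu)}\otimes b_{\mu+\nu})$ by Lemma~\ref{lem: G(b otimes b_mu)}, and that, by Proposition~\ref{prop: suff cond to be based hom} applied as in the proof of Proposition~\ref{prop: existence of based U-hom}, $f$ sends a global basis element $G(b)$ to $G(\phi(b))$ for the crystal morphism $\phi$ of that proof. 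Writing $b_{w_\bullet(\lambda+\tau\nu)}=\pi'(b_{-\infty})$ and using the identity $\phi(\pi'(b')\otimes b_{\mu+\nu})=b_{\nu'}\otimes\pi''(b')\otimes b_\mu$ established there (notation $\pi',\pi''$ as in that proof), with $b'=b_{-\infty}$ and $\pi''(b_{-\infty})=b_{w_\bullet\lambda}$ by \eqref{eq: iota_dot lm}, one obtains from Lemma~\ref{lem: G(b_nu otimes b otimes b_mu)}
\[
  f(v_{w_\bullet(\lambda+\tau\nu)}\otimes v_{\mu+\nu})=G(b_{\nu'}\otimes b_{w_\bullet\lambda}\otimes b_\mu)=v_{\nu'}\otimes v_{w_\bullet\lambda}\otimes v_\mu,
\]
whence $\pi^\imath_{\lambda,\mu,\nu}(v_{w_\bullet(\lambda+\tau\nu)}\otimes v_{\mu+\nu})=\delta^\imath_\nu(v_{\nu'})(v_{w_\bullet\lambda}\otimes v_\mu)=v_{w_\bullet\lambda}\otimes v_\mu$; uniqueness is immediate since this vector generates the source $\mathbf{U}^\imath$-module. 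The point requiring genuine care here is not a calculation—the combinatorics already lives in Proposition~\ref{prop: existence of based U-hom}—but the faithful transport to $\mathbf{U}^\imath$ of the two based-module facts invoked above (that a based $\mathbf{U}$-homomorphism stays based over $\mathbf{U}^\imath$, and that contracting a $\mathbf{U}$-tensor factor against $\delta^\imath_\nu$ preserves basedness), both of which hinge on the compatibility of the quasi-$K$-matrix construction of the $\imath$canonical basis with the $\mathbf{U}$-module structures.
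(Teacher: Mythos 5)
Your overall plan is the same as the paper's: take the based $\mathbf{U}$-module homomorphism $f$ of Proposition~\ref{prop: existence of based U-hom}, contract the $V(\nu+w_\bullet\tau\nu)$ tensor leg against $\delta^\imath_\nu$, and check the composite is based. But you execute the basedness check quite differently, and this is where a genuine gap appears. You try to show that the two maps $f$ and $F:=\delta^\imath_\nu\otimes\mathrm{id}$ are \emph{each} based $\mathbf{U}^\imath$-module homomorphisms, and you relegate both claims to ``can be quoted/extracted from \cite{BW16,BW21}.'' The claim about $F$ is the problem: the $\imath$bar-involution on $V(\nu+w_\bullet\tau\nu)\otimes V(w_\bullet\lambda,\mu)$ is \emph{not} the tensor product of the $\imath$bar on $V(\nu+w_\bullet\tau\nu)$ with the ordinary bar on $V(w_\bullet\lambda,\mu)$; it is corrected by a quasi-$R$-matrix-type intertwiner (cf.\ the construction of the $\imath$canonical basis on tensor products in \cite{BW18,BW21}). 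So ``applying $\delta^\imath_\nu$ in the first slot commutes with $\psi^\imath$'' does not follow formally from $\delta^\imath_\nu$ commuting with the $\imath$bar on $V(\nu+w_\bullet\tau\nu)$, and I am not aware of a citeable statement in those references covering exactly this (and $V(w_\bullet\lambda,\mu)$ is not an irreducible $\mathbf{U}$-module, so even loose analogues would need adaptation). Your parallel claim, that a based $\mathbf{U}$-module homomorphism between based $\mathbf{U}$-modules carrying $\imath$canonical bases is automatically a based $\mathbf{U}^\imath$-module homomorphism, is more defensible (the quasi-$K$-matrix acts through the $\mathbf{U}$-module structure, which $f$ intertwines), but you also leave it undetailed.

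The paper sidesteps both issues. It runs the four conditions of Lemma~\ref{lem: criterion of based ihom} directly on the \emph{composite} $\pi^\imath=(\delta^\imath_\nu\otimes\mathrm{id})\circ f$: preservation of $\mathcal{L}$ and ${}_{\mathcal{A}}$ follow from each factor separately (this much is easy and does not require knowing the $\imath$bar), the induced map on the crystal is explicit and visibly injective on the nonzero locus, and — crucially — the $\imath$bar-commutativity of $\pi^\imath$ is obtained from the single observation that $\pi^\imath(v_{w_\bullet(\lambda+\tau\nu)}\otimes v_{\mu+\nu})=v_{w_\bullet\lambda}\otimes v_\mu$, using that $V(w_\bullet(\lambda+\tau\nu),\mu+\nu)$ is cyclic over $\mathbf{U}^\imath$ generated by the $\imath$bar-invariant vector $v_{w_\bullet(\lambda+\tau\nu)}\otimes v_{\mu+\nu}$ and that the target vector is likewise $\imath$bar-invariant. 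This cyclic-generator trick entirely bypasses the need to understand the $\imath$bar on the intermediate module $V(\nu+w_\bullet\tau\nu)\otimes V(w_\bullet\lambda,\mu)$ or to establish basedness of $F$ alone. If you want to keep your factor-by-factor structure, you would need to actually prove the tensoring claim for $F$ rather than cite it; the paper's approach is both shorter and avoids the technical landmine.

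Your computation of $f$ on the cyclic vector (using $\pi'(b_{-\infty})=b_{w_\bullet(\lambda+\tau\nu)}$, $\pi''(b_{-\infty})=b_{w_\bullet\lambda}$, and Lemma~\ref{lem: G(b_nu otimes b otimes b_mu)}) is correct and matches the paper.
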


\begin{proof}
  Let $f: V(w_\bullet(\lambda+\tau\nu), \mu+\nu) \rightarrow V(\nu+w_\bullet\tau\nu) \otimes V(w_\bullet\lambda, \mu)$ denote the based $\mathbf{U}$-module homomorphism in Proposition \ref{prop: existence of based U-hom}.
  Then, we have
  $$
  f(v_{w_\bullet(\lambda+\tau\nu)} \otimes v_{\mu+\nu}) = f(G(b_{w_\bullet(\lambda+\tau\nu)} \otimes b_{\mu+\nu})) = G(b_{\nu+w_\bullet\tau\nu} \otimes b_{w_\bullet\lambda} \otimes b_\mu) = v_{\nu+w_\bullet\tau\nu} \otimes v_{w_\bullet\lambda} \otimes v_\mu.
  $$

  Set $\pi^\imath = \pi^\imath_{\lambda,\mu,\nu} := (\delta^\imath_\nu \otimes \mathrm{id}) \circ f$.
  Noting that $G^\imath(b_{\nu+w_\bullet\tau\nu}) = v_{\nu+w_\bullet\tau\nu}$, we obtain
  $$
  \pi^\imath(v_{w_\bullet(\lambda+\tau\nu)} \otimes v_{\mu+\nu}) = (\delta^\imath_\nu \otimes \mathrm{id})(v_{\nu+w_\bullet\tau\nu} \otimes v_{w_\bullet\lambda} \otimes v_\mu) = v_{w_\bullet\lambda} \otimes v_\mu.
  $$
  This implies that $\pi^\imath$ commutes with the $\imath$bar-involutions.
  On the other hand, both $f$ and $\delta^\imath_\nu \otimes \mathrm{id}$ preserve the crystal lattices and $\mathcal{A}$-forms.
  Let $\phi: \mathcal{B}(w_\bullet(\lambda+\tau\nu), \mu+\nu) \rightarrow \mathcal{B}(\nu+w_\bullet\tau\nu) \otimes \mathcal{B}(w_\bullet\lambda, \mu) \sqcup \{0\}$ and $\delta^\imath_\nu: \mathcal{B}(\nu+w_\bullet\tau\nu) \rightarrow \{ 1 \} \sqcup \{0\}$ denote the induced maps.
  Then, $\pi^\imath$ preserves the crystal lattices, and the induced map
  $$
  \pi^\imath = \pi^\imath_{\lambda,\mu,\nu} := (\delta^\imath_\nu \otimes \mathrm{id}) \circ \phi: \mathcal{B}(w_\bullet(\lambda+\tau\nu),\mu+\nu) \rightarrow \mathcal{B}(w_\bullet\lambda,\mu) \sqcup \{0\}
  $$
  is injective on $\{ b \in \mathcal{B}(w_\bullet(\lambda+\tau\nu), \mu+\nu) \mid \pi^\imath(b) \neq 0 \}$.
  By Lemma \ref{lem: criterion of based ihom}, we see that $\pi^\imath$ is based.
  Thus, the proof completes.
\end{proof}

\section{Irreducible finite type of real rank $1$}\label{sec: main results}
In this section, we assume the following:
\begin{itemize}
  \item The admissible pair $(I, I_\bullet, \tau)$ is of irreducible finite type of real rank $1$ (cf. Subsection \ref{subsec: adm pair}).
  \item $Y = \sum_{i \in I} \mathbb{Z} h_i$.
  \item $X = \operatorname{Hom}_{\mathbb{Z}}(Y, \mathbb{Z})$.
  \item $\kappa_i = 0$ for all $i \in I_\circ$.
\end{itemize}

For each $i \in I$, let $\varpi_i \in X$ denote the $i$-th fundamental weight;
  $$
  \langle h_j, \varpi_i \rangle = \delta_{i,j}
  $$
for all $j \in I$.

In Subsections \ref{subsec: type A}--\ref{subsec: type F}, we study certain $\mathbf{U}^\imath$-modules in order to prove Proposition \ref{prop: base statement}.
Then, we prove the stability of $\imath$canonical bases in Theorem \ref{thm: main}.

\subsection{Type A}\label{subsec: type A}
Let $n \geq 1$ and consider the Dynkin diagram $I = \{ 1,\ldots,n \}$ of type $A_n$.
For each $r \in I$ and $n+1 \geq i_1 > \cdots i_r \geq 1$, let $b_{i_1,\ldots,i_r} \in \mathcal{B}(\varpi_r)$ denote the unique element of weight $\sum_{k=1}^r (\varpi_{i_k}-\varpi_{i_k-1})$; here, we set $\varpi_0 := 0$.
Set $v_{i_1,\ldots,i_r} := G(b_{i_1,\ldots,i_r}) \in V(\varpi_r)$.

\subsubsection*{Type \rm{AI}}
Set
\begin{align}
\begin{split}
  &\varsigma_1 := q^{-1}, \\
  &\varpi := 2\varpi_1.
\end{split} \label{eq: setting AI}
\end{align}
Since
$$
\varpi_1 + w_\bullet\tau\varpi_1 = 2\varpi_1,
$$
we have
$$
\nu+w_\bullet\tau\nu \in \mathbb{Z}_{\geq 0} \varpi
$$
for all $\nu \in X^+$.

\begin{lem}\label{lem: K type AI}
  Let $K: V(\varpi_1) \rightarrow V(\varpi_1)$ denote the linear isomorphism given by
  $$
  K(v_1) = v_2, \quad K(v_2) = v_1.
  $$
  Then, $K$ is a based $\mathbf{U}^\imath$-module isomorphism.
\end{lem}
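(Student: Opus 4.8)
The plan is to observe that the operator $K$ is simply the action of the generator $B_1 \in \mathbf{U}^\imath$ on $V(\varpi_1)$, and then to identify the $\imath$canonical basis of $V(\varpi_1)$ explicitly; both steps are short computations which the normalization $\varsigma_1 = q^{-1}$ of \eqref{eq: setting AI} is designed to make work. First I would compute $B_1$ on $V(\varpi_1)$: since $I_\bullet = \emptyset$ we have $w_\bullet = e$ and $T_{w_\bullet} = \mathrm{id}$, so together with $\tau = \mathrm{id}$, $\kappa_1 = 0$ and $\varsigma_1 = q^{-1}$ we get $B_1 = F_1 + q^{-1} E_1 K_1^{-1}$. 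Using $E_1 v_1 = 0$, $F_1 v_1 = v_2$, $F_1 v_2 = 0$, $E_1 v_2 = v_1$, $K_1 v_1 = q v_1$ and $K_1 v_2 = q^{-1} v_2$, one obtains $B_1 v_1 = v_2$ and $B_1 v_2 = \varsigma_1 q\, v_1 = v_1$. Hence $K$ coincides with the operator $B_1$ on $V(\varpi_1)$. Since $\mathbf{U}^\imath$ is generated as an algebra by $B_1$ (here $I_\bullet = \emptyset$ and $Y^\imath = 0$), $K$ commutes with the entire $\mathbf{U}^\imath$-action, so $K$ is a $\mathbf{U}^\imath$-module endomorphism of $V(\varpi_1)$; as $K^2 = \mathrm{id}$, it is an isomorphism.

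Next I would show that $\mathbf{B}^\imath(\varpi_1) = \{v_1, v_2\}$. The highest weight vector $v_1 = v_{\varpi_1}$ is $\imath$bar-invariant, so by the triangularity characterization of the $\imath$canonical basis (\cite[Theorem 6.12]{BW21}; cf.\ Example \ref{ex: based Ui-mod and hom}) we have $G^\imath(b_1) = v_1$. For the lowest weight vector, the identity $v_2 = B_1 v_1$ gives $\overline{v_2} = \overline{B_1}\,\overline{v_1} = B_1 v_1 = v_2$, so $v_2$ is $\imath$bar-invariant as well; since $-\varpi_1$ is the lowest weight of $V(\varpi_1)$, there is no room for lower-order corrections and $G^\imath(b_2) = v_2$. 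Thus $\{v_1, v_2\}$ is precisely the $\imath$canonical basis of $V(\varpi_1)$.

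Finally, $K$ interchanges $v_1$ and $v_2$, so $K(\mathbf{B}^\imath(\varpi_1)) = \mathbf{B}^\imath(\varpi_1)$ and $\operatorname{Ker} K = 0$ is trivially a based $\mathbf{U}^\imath$-submodule; hence $K$ is a based $\mathbf{U}^\imath$-module isomorphism (either directly from the definition, or via Lemma \ref{lem: criterion of based ihom}). The only step requiring a little care is the identification of $\mathbf{B}^\imath(\varpi_1)$, i.e.\ the $\imath$bar-invariance of the two weight vectors; this is exactly where the value $\varsigma_1 = q^{-1}$ enters, since for any other choice of parameter one would get $B_1 v_2 = \varsigma_1 q\, v_1 \neq v_1$, and the swap $K$ would no longer be realized by an element of $\mathbf{U}^\imath$.
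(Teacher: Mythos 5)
Your proposal is correct and its overall structure mirrors the paper's: first establish that $K$ is a $\mathbf{U}^\imath$-module homomorphism, then verify that $\{v_1,v_2\}$ is the $\imath$canonical basis $\mathbf{B}^\imath(\varpi_1)$. The interesting divergence is in the first step. The paper invokes \cite[Theorem 4.18]{BW18} to obtain a $\mathbf{U}^\imath$-module isomorphism $K'$ with $K'(v_1)=v_2$ (essentially, the $K$-matrix), and then computes $K'(v_2)=K'(B_1v_1)=B_1v_2=v_1$ to conclude $K'=K$. You instead observe directly that with $\varsigma_1=q^{-1}$ the operator $B_1=F_1+q^{-1}E_1K_1^{-1}$ acts on the two-dimensional module as the swap $v_1\leftrightarrow v_2$, i.e.\ $K=B_1$ as an operator, and that $\mathbf{U}^\imath$ is commutative in this case (generated by $B_1$ alone, since $I_\bullet=\emptyset$ and $Y^\imath=0$), so $K$ automatically commutes with the $\mathbf{U}^\imath$-action and $K^2=\mathrm{id}$ makes it an isomorphism. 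Your argument is more elementary and self-contained, avoiding the $K$-matrix machinery, though it is special to this rank-one commutative situation; the paper's citation of the general existence theorem is more in keeping with how the analogous lemmas (e.g.\ Lemma \ref{lem: K type AIII}, Lemma \ref{lem: K AIV}) are uniformly treated by the ``same way'' throughout the section. Your second step, showing $G^\imath(b_2)=v_2$ via $\imath$bar-invariance of $v_2=B_1v_1$, matches the paper exactly, as does the concluding observation that $K$ permutes the $\imath$canonical basis with trivial kernel.
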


\begin{proof}
  By Theorem \cite[Theorem 4.18]{BW18}, we see that there exists a $\mathbf{U}^\imath$-module isomorphism $K'$ such that $K'(v_1) = v_2$.
  Then, we have
  $$
  K'(v_2) = K'(B_1 v_1) = B_1 v_2 = v_1.
  $$
  Hence, we obtain $K' = K$.

  It remains to show that $K$ is based.
  We have
  $$
  v_2 = B_1 v_1 \in \mathcal{L}(\varpi_1) \cap {}_{\mathcal{A}} V(\varpi_1) \cap \psi^\imath(\mathcal{L}(\varpi_1 )),
  $$
  where $\psi^\imath$ denote the $\imath$bar-involution on $V(\varpi_1)$.
  This implies that $v_2 = G^\imath(b_2)$.
  Therefore, the $K$ is a based $\mathbf{U}^\imath$-module isomorphism.
  Thus, the proof completes.
\end{proof}

\begin{lem}\label{lem: w0 type AI}
There exists $w_0 \in \mathcal{L}(\varpi)$ such that $\mathbf{U}^\imath w_0 \simeq \mathbb{Q}(q)$ and $\mathrm{ev}_\infty(w_0) = b_\varpi$.
\end{lem}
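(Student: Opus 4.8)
The plan is to exhibit $w_0$ explicitly and verify the three assertions by a direct rank-one computation, after first making the structure of $\mathbf{U}^\imath$ in this case completely transparent. Since $I=\{1\}$, $I_\bullet=\emptyset$, $\tau=\mathrm{id}$ and $\kappa_1=0$, we have $w_\bullet=e$ (hence $T_{w_\bullet}=\mathrm{id}$) and $Y^\imath=\{h\in Y\mid 2h=0\}=0$, so $\mathbf{U}^\imath$ is generated as an algebra by the single element
$$
B_1=F_1+q^{-1}E_1K_1^{-1},
$$
where I used $\varsigma_1=q^{-1}$ from \eqref{eq: setting AI}. Consequently, for any weight vector $w_0$ one has $\mathbf{U}^\imath w_0=\mathbb{Q}(q)\{w_0,B_1w_0,B_1^2w_0,\dots\}$, so $\mathbf{U}^\imath w_0\simeq\mathbb{Q}(q)$ precisely when $B_1w_0=0$; indeed the trivial $\mathbf{U}^\imath$-module $\mathbb{Q}(q)$ is, up to isomorphism, the unique one-dimensional $\mathbf{U}^\imath$-module, and $B_1$ acts on it by $\kappa_1=0$.

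Next I would recall that $V(\varpi)=V(2\varpi_1)$ is the three-dimensional irreducible $\mathbf{U}$-module, with global basis $\{v_\varpi,\ F_1v_\varpi,\ F_1^{(2)}v_\varpi\}$ of weights $2\varpi_1,0,-2\varpi_1$, where $v_\varpi:=G(b_\varpi)$ and $F_1^{(k)}:=F_1^k/[k]_1!$; in particular $\mathcal{L}(\varpi)$ is the $\mathbf{A}_\infty$-span of these three vectors. I would then set
$$
w_0:=v_\varpi-q^{-1}F_1^{(2)}v_\varpi\in\mathcal{L}(\varpi),
$$
and observe immediately that $\mathrm{ev}_\infty(w_0)=v_\varpi+q^{-1}\mathcal{L}(\varpi)=b_\varpi$, since $q^{-1}F_1^{(2)}v_\varpi\in q^{-1}\mathcal{L}(\varpi)$.

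The only remaining point is $B_1w_0=0$, which is a short $U_q(\mathfrak{sl}_2)$ computation: using $E_1v_\varpi=0$, $E_1F_1^{(2)}v_\varpi=F_1v_\varpi$, $F_1F_1^{(2)}v_\varpi=0$, and the $K_1$-eigenvalues $q^{2}$ on $v_\varpi$ and $q^{-2}$ on $F_1^{(2)}v_\varpi$, one finds $F_1w_0=F_1v_\varpi$ and $q^{-1}E_1K_1^{-1}w_0=-F_1v_\varpi$, hence $B_1w_0=0$. Together with the first paragraph this yields $\mathbf{U}^\imath w_0=\mathbb{Q}(q)w_0\simeq\mathbb{Q}(q)$, completing the proof. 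There is no genuine obstacle: the only points needing care are the normalization of the $U_q(\mathfrak{sl}_2)$-action on $V(2\varpi_1)$ and the identification of the trivial $\mathbf{U}^\imath$-module, the latter resting on the hypothesis $\kappa_1=0$.
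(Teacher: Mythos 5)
Your proof is correct, and the computation checks out: with $\varsigma_1=q^{-1}$ and $\kappa_1=0$, the $B_1$-action on $V(2\varpi_1)$ in the global basis $\{v_\varpi,F_1v_\varpi,F_1^{(2)}v_\varpi\}$ has a one-dimensional kernel spanned by $v_\varpi-q^{-1}F_1^{(2)}v_\varpi$, which visibly lies in $\mathcal{L}(\varpi)$ and reduces to $b_\varpi$. Your route, however, is genuinely different from the paper's. The paper does not exhibit $w_0$ by solving $B_1w_0=0$; instead it builds a $\mathbf{U}^\imath$-module homomorphism
\[
g:\ V(\varpi)\xrightarrow{\chi_{\varpi_1,\varpi_1}} V(\varpi_1)\otimes V(\varpi_1)\xrightarrow{K\otimes\mathrm{id}} V(\varpi_1)\otimes V(\varpi_1)\xrightarrow{\delta_{\varpi_1}} \mathbb{Q}(q)
\]
from the $K$-operator of Lemma~\ref{lem: K type AI} together with $\chi_{\varpi_1,\varpi_1}$ and $\delta_{\varpi_1}$, shows $g$ preserves crystal lattices with $\mathrm{ev}_\infty(g(G(b)))=\delta_{b,b_\varpi}$, and then takes $W_0$ to be the orthogonal complement of $\ker g$ for the contravariant form, concluding $W_0\simeq\operatorname{Im} g=\mathbb{Q}(q)$ without ever writing $w_0$ down. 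What the paper's approach buys is reusability: the same template, with the appropriate $K$-operator substituted, is invoked verbatim for types AIII and AIV (Lemmas~\ref{lem: w0 type AIII} and~\ref{lem: w0 type AIV}). What your approach buys is transparency at essentially zero cost in this rank-one situation: it produces $w_0$ explicitly, in parallel with how the paper itself treats the types BII, CII, DII, FII by direct computation.

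One small inaccuracy worth flagging: you assert that $\mathbb{Q}(q)$ is, up to isomorphism, the unique one-dimensional $\mathbf{U}^\imath$-module. Since here $\mathbf{U}^\imath\cong\mathbb{Q}(q)[B_1]$ is a polynomial algebra, it in fact has a one-dimensional module for every scalar value of $B_1$. Your argument does not actually use this false claim — what you use, correctly, is that $B_1$ acts by $0$ on the trivial module $\mathbb{Q}(q)=V(0)$, so $\mathbf{U}^\imath w_0\simeq\mathbb{Q}(q)$ for a weight vector $w_0$ iff $B_1w_0=0$ — but the parenthetical should be deleted or corrected.
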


\begin{proof}
  Noting that $\varpi = 2\varpi_1$ and $V(\varpi_1) = V(-\varpi_1)$ (see Remark \ref{rem: finite type}), consider the composition
  $$
  g: V(\varpi) \xrightarrow{\chi_{\varpi_1,\varpi_1}} V(\varpi_1) \otimes V(\varpi_1) \xrightarrow{K \otimes \mathrm{id}} V(\varpi_1) \otimes V(\varpi_1) \xrightarrow{\delta_{\varpi_1}} \mathbb{Q}(q).
  $$
  For the definitions of based $\mathbf{U}$-module homomorphisms $\chi_{\varpi_1, \varpi_1}$ and $\delta_{\varpi_1}$, see Example \ref{ex: based mods} \eqref{ex: based mods chi} and \eqref{ex: based mods delta}, respectively.

  By the definition, $g$ is a $\mathbf{U}^\imath$-module homomorphism preserving the crystal lattices.
  For each $b \in \mathcal{B}(\varpi)$, considering at $q = \infty$, we obtain
  $$
  \mathrm{ev}_\infty(g(G(b))) = \delta_{b, b_{\varpi}}.
  $$
  Hence, $\operatorname{Ker} g$ has a basis $\{ w_b \mid b \in \mathcal{B}(\varpi) \setminus \{ b_\varpi \} \}$ such that $w_b \in \mathcal{L}(\varpi)$ and $\mathrm{ev}_\infty(w_b) = b$.
  Therefore, the complement $W_0 \subset V(\varpi)$ of $\operatorname{Ker} g$ with respect to the inner product on $V(\varpi)$ (cf. Subsection \ref{subsec: V(pm lm)}) is spanned by a vector $w_0 \in \mathcal{L}(\varpi)$ satisfying $\mathrm{ev}_\infty(w_0) = b_\varpi$.
  Now, the assertion follows from the following:
  $$
  W_0 \simeq V(\varpi)/\operatorname{Ker} g \simeq \operatorname{Im} g = \mathbb{Q}(q).
  $$
  Thus, the proof completes.
\end{proof}

\subsubsection*{Type \rm{AII}}
Set
\begin{align}
\begin{split}
  &\varsigma_2 := q, \\
  &\varpi := \varpi_2.
\end{split} \label{eq: setting AII}
\end{align}
Since
$$
\varpi_i + w_\bullet \tau \varpi_i = \begin{cases}
    \varpi_2 & \text{ if } i = 1,3, \\
    2\varpi_2 & \text{ if } i = 2
  \end{cases}
$$
for all $i \in I$, we have
$$
\nu + w_\bullet\tau\nu \in \mathbb{Z}_{\geq 0} \varpi
$$
for all $\nu \in X^+$.

\begin{lem}\label{lem: calc AII}
We have $B_2 v_{4,3} = q^2 v_{3,1}$.
\end{lem}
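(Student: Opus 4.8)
The plan is to unwind the definition of $B_2$ and reduce the identity to a single computation of the braid operator $T_{w_\bullet}=T_1T_3$ on the minuscule module $V(\varpi_2)$. For type AII one has $I_\circ=\{2\}$, $\tau=\mathrm{id}$ and $w_\bullet=s_1s_3$ (the longest element of $W_{\{1,3\}}$), and by the blanket assumption $\kappa_2=0$, so $B_2=F_2+\varsigma_2\,T_{w_\bullet}(E_2)K_2^{-1}$ with $\varsigma_2=q$ by \eqref{eq: setting AII}. Now $v_{4,3}$ is the lowest weight vector of $V(\varpi_2)$ (it has weight $-\varpi_2$), hence $F_2v_{4,3}=0$, while $K_2^{-1}v_{4,3}=q^{-\langle h_2,-\varpi_2\rangle}v_{4,3}=q\,v_{4,3}$. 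Therefore $B_2v_{4,3}=q^2\,T_{w_\bullet}(E_2)v_{4,3}$, and it remains to show $T_{w_\bullet}(E_2)v_{4,3}=v_{3,1}$.

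To compute $T_{w_\bullet}(E_2)=T_1(T_3(E_2))$, I would use the rank-two formulas for $T_i$ on Chevalley generators (as in \cite[Section~37.1]{Lus10}): since $a_{1,2}=a_{3,2}=-1$ and $a_{1,3}=0$, the element $T_3(E_2)$ is a $\mathbb{Q}(q)$-linear combination of $E_3E_2$ and $E_2E_3$ (leading term $E_3E_2$ with coefficient $1$), $T_1(E_3)=E_3$, and $T_1(E_2)$ is a $\mathbb{Q}(q)$-linear combination of $E_1E_2$ and $E_2E_1$ (leading term $E_1E_2$ with coefficient $1$). Expanding, $T_{w_\bullet}(E_2)$ becomes a $\mathbb{Q}(q)$-linear combination of the four cubic monomials $E_3E_1E_2,\ E_3E_2E_1,\ E_1E_2E_3,\ E_2E_1E_3$, in which the coefficient of $E_3E_1E_2$ is $1$.

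Applying this to $v_{4,3}$: since $\langle h_1,-\varpi_2\rangle=\langle h_3,-\varpi_2\rangle=0$ and neither $-\varpi_2+\alpha_1$ nor $-\varpi_2+\alpha_3$ is a weight of $V(\varpi_2)$, one has $E_1v_{4,3}=E_3v_{4,3}=0$, so three of the four monomials kill $v_{4,3}$ and $T_{w_\bullet}(E_2)v_{4,3}=E_3E_1E_2\,v_{4,3}$. Because $V(\varpi_2)\cong\Lambda^2V(\varpi_1)$ is minuscule, each $E_i$ sends a canonical basis element either to $0$ or to another canonical basis element with coefficient exactly $1$; tracking weights one finds $E_2v_{4,3}=v_{4,2}$, then $E_1v_{4,2}=v_{4,1}$, then $E_3v_{4,1}=v_{3,1}$, whence $E_3E_1E_2v_{4,3}=v_{3,1}$ and $B_2v_{4,3}=q^2v_{3,1}$. (Alternatively one can avoid writing out $T_{w_\bullet}(E_2)$ explicitly: by the covariance of the braid action, $T_{w_\bullet}(E_2)$ acts on $V(\varpi_2)$ as $T_{w_\bullet}\circ E_2\circ T_{w_\bullet}^{-1}$, and $v_{4,3}$ is fixed by $T_{w_\bullet}=T_1T_3$ since it is annihilated by $E_1,F_1,E_3,F_3$; hence $T_{w_\bullet}(E_2)v_{4,3}=T_{w_\bullet}(v_{4,2})$, which is computed by moving $v_{4,2}$ to the top of its $s_3$-string and then $v_{3,2}$ to the top of its $s_1$-string.)

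The only delicate part is the bookkeeping of $q$-powers: one must use the precise normalization of $T''_{i,1}$ to see that the coefficient of $E_3E_1E_2$ in $T_{w_\bullet}(E_2)$ is \emph{exactly} $1$ (equivalently, that $T_{w_\bullet}$ moves $v_{4,2}$ to $v_{3,1}$ with coefficient $1$, not a power of $q$), and to confirm that the $E_i$-structure constants on the canonical basis of the minuscule module $V(\varpi_2)$ are all genuinely $1$. Both are finite, mechanical verifications rather than conceptual obstacles.
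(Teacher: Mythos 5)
Your proposal is correct, and it essentially contains two arguments, the second of which (the parenthetical alternative) is precisely the paper's proof. The paper invokes \cite[Proposition 5.2.2]{Lus10} to rewrite $T_{w_\bullet}(E_2)v_{4,3}$ as $T_{w_\bullet}(E_2\,T_{w_\bullet}^{-1}(v_{4,3}))$, observes that $T_{w_\bullet}^{-1}(v_{4,3})=v_{4,3}$ (since $E_1,F_1,E_3,F_3$ all kill the lowest weight vector), computes $E_2 v_{4,3}=v_{4,2}$, and concludes $T_{w_\bullet}(v_{4,2})=v_{3,1}$ --- exactly your ``alternatively'' remark. Your primary argument, expanding $T_1T_3(E_2)$ as a linear combination of the four monomials $E_3E_1E_2,\,E_3E_2E_1,\,E_1E_2E_3,\,E_2E_1E_3$ and noting that only $E_3E_1E_2$ survives on $v_{4,3}$ because $E_1v_{4,3}=E_3v_{4,3}=0$, is a more explicit but equally valid route; the reduction $F_2v_{4,3}=0$, $K_2^{-1}v_{4,3}=q\,v_{4,3}$, and $\varsigma_2=q$ giving the $q^2$ is identical to the paper's. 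The one point you flag as ``delicate bookkeeping'' --- that the coefficient of $E_3E_1E_2$ is exactly $1$ --- does indeed come out to $1$ from $T''_{i,1}(E_j)=E_iE_j-qE_jE_i$ when $a_{i,j}=-1$, $d_i=1$, so the argument closes; the paper's route via Lusztig's $5.2.2$ simply avoids having to verify that constant explicitly, which is what it buys over your primary approach.
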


\begin{proof}
  Using \cite[Proposition 5.2.2]{Lus10}, we compute as
  \begin{align*}
    &B_2 v_{4,3} = (F_2 + qT_{w_\bullet}(E_2)K_2^{-1})(v_{4,3}) \\
    &= q^2T_{w_\bullet}(E_2 T_{w_\bullet}^{-1}(v_{4,3})) \\
    &= q^2T_{w_\bullet}(E_2 v_{4,3}) \\
    &= q^2T_{w_\bullet}(v_{4,2}) \\
    &= q^2 v_{3,1}.
  \end{align*}
  This proves the assertion.
\end{proof}

\begin{lem}\label{lem: w0 type AII}
There exists $w_0 \in \mathcal{L}(\varpi)$ such that $\mathbf{U}^\imath w_0 \simeq \mathbb{Q}(q)$ and $\mathrm{ev}_\infty(w_0) = b_\varpi$.
\end{lem}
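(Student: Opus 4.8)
The plan is to write the vector $w_0$ down explicitly, using Lemma \ref{lem: calc AII}. Recall that here $\varpi = \varpi_2$, that $v_{2,1}$ is the highest weight vector of $V(\varpi)$, and that $v_{4,3}$ is its lowest weight vector, of weight $-\varpi$. First I would compute $B_2 v_{2,1}$: since $2 \in I_\circ$, the element $T_{w_\bullet}(E_2)$ lies in $\mathbf{U}^+$ and carries the positive weight $w_\bullet\alpha_2 = \alpha_1+\alpha_2+\alpha_3$, hence annihilates the highest weight vector $v_{2,1}$; as $\kappa_2 = 0$, this leaves $B_2 v_{2,1} = F_2 v_{2,1} = v_{3,1}$. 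Combining this with $B_2 v_{4,3} = q^2 v_{3,1}$ from Lemma \ref{lem: calc AII}, I would set
$$
w_0 := v_{2,1} - q^{-2} v_{4,3},
$$
so that $B_2 w_0 = v_{3,1} - q^{-2}\cdot q^2 v_{3,1} = 0$.

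Next I would verify that $w_0$ generates the trivial $\mathbf{U}^\imath$-module. A short weight computation shows $V(\varpi)_{\mu \pm \alpha_j} = 0$ for $\mu \in \{\varpi, -\varpi\}$ and $j \in I_\bullet = \{1,3\}$, so $E_j$ and $B_j = F_j$ annihilate both $v_{2,1}$ and $v_{4,3}$ when $j \in I_\bullet$; moreover $\langle h_j, \varpi \rangle = 0$ for $j = 1,3$ and $Y^\imath = \mathbb{Z}h_1 \oplus \mathbb{Z}h_3$, so $K_h$ fixes both vectors for every $h \in Y^\imath$. As $\mathbf{U}^\imath$ is generated by the $E_j$ $(j \in I_\bullet)$, the $B_i$ $(i \in I)$, and the $K_h$ $(h \in Y^\imath)$, each of which acts on $w_0$ by $0$ or by the identity, it follows that $\mathbf{U}^\imath w_0 = \mathbb{Q}(q) w_0$; since $w_0 \neq 0$ and, thanks again to $\kappa_2 = 0$, these actions coincide with those on the trivial $\mathbf{U}^\imath$-module, we conclude $\mathbf{U}^\imath w_0 \simeq \mathbb{Q}(q)$. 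Finally $v_{2,1}, v_{4,3} \in \mathcal{L}(\varpi)$ and $q^{-2} \in q^{-1}\mathbf{A}_\infty$, so $w_0 \in \mathcal{L}(\varpi)$ and $\mathrm{ev}_\infty(w_0) = \mathrm{ev}_\infty(v_{2,1}) = b_{2,1} = b_\varpi$.

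The only genuine computation here is the identity $B_2 v_{2,1} = v_{3,1}$ together with its cancellation against $B_2 v_{4,3} = q^2 v_{3,1}$ after the rescaling by $q^{-2}$; everything else is bookkeeping with weights and with the generators of $\mathbf{U}^\imath$. An alternative would be to imitate the proof of Lemma \ref{lem: w0 type AI} and present the desired trivial quotient as a composite of based homomorphisms out of $V(\varpi_2)$, but since $\varpi_2$ is not a $\tau$-compatible sum of other fundamental weights there is no convenient analogue of the map $\chi_{\varpi_1,\varpi_1}$ used there; this is why Lemma \ref{lem: calc AII} was recorded first, and the direct construction above is the natural route.
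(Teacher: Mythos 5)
Your proof is correct and follows exactly the same route as the paper: you define $w_0 := v_{2,1} - q^{-2}v_{4,3}$ (the same vector the paper uses) and verify it spans a trivial $\mathbf{U}^\imath$-submodule by checking the action of each generator, which is precisely the ``direct calculation'' the paper invokes without writing out. The key identity $B_2 v_{2,1} = v_{3,1}$, the cancellation against Lemma \ref{lem: calc AII}, and the weight/coweight bookkeeping for $j \in I_\bullet$ and $h \in Y^\imath$ are all correct.
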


\begin{proof}
  By direct calculation and Lemma \ref{lem: calc AII}, we see that the vector
  $$
  w_0 := v_{2,1} - q^{-2}v_{4,3} \in V(\varpi)
  $$
  spans the $\mathbf{U}^\imath$-submodule isomorphic to $\mathbb{Q}(q)$.
  Clearly, we have $w_0 \in \mathcal{L}(\varpi)$ and $\mathrm{ev}_\infty(w_0) = b_{2,1} = b_\varpi$.
  Thus, the proof completes.
\end{proof}

\subsubsection*{Type \rm{AIII}}
Set
\begin{align}
\begin{split}
  &\varsigma_1 = \varsigma_2 := 1, \\
  &\varpi := \varpi_1 + \varpi_2.
\end{split} \label{eq: setting AIII}
\end{align}
Since
$$
\varpi_i + w_\bullet\tau\varpi_i = \varpi_1 + \varpi_2
$$
for all $i \in I$, we have
$$
\nu + w_\bullet\tau\nu \in \mathbb{Z}_{\geq 0} \varpi
$$
for all $\nu \in X^+$.

For each $i = 1,2$, the $V(\varpi_i)$ is two-dimensional.
Let $v_1^i$ (resp., $v_2^i$) denote the highest (resp., lowest) weight vector.

\begin{lem}\label{lem: K type AIII}
  Let $K: V(\varpi_1) \rightarrow V(\varpi_2)$ denote the linear isomorphism givne by
  $$
  K(v_1^1) = v_2^2, \ K(v_2^1) = v_1^2.
  $$
  Then, $K$ is a based $\mathbf{U}^\imath$-module isomorphism.
\end{lem}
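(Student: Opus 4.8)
The plan is to follow verbatim the strategy of Lemma~\ref{lem: K type AI}. Since $I_\bullet = \emptyset$ in type AIII, we have $w_\bullet = \mathrm{id}$, so that $B_1 = F_1 + E_2 K_1^{-1}$ and $B_2 = F_2 + E_1 K_2^{-1}$; also $Y^\imath = \mathbb{Z}(h_1 - h_2)$, hence $\mathbf{U}^\imath$ is generated by $B_1$, $B_2$, and $K_{h_1 - h_2}^{\pm 1}$. Because $\langle h_2, \varpi_1 \rangle = 0$, the operators $E_2, F_2$ act as $0$ on $V(\varpi_1)$, and likewise $E_1, F_1$ act as $0$ on $V(\varpi_2)$. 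A short direct computation then yields $B_1 v_1^1 = v_2^1$ and $B_2 v_2^1 = v_1^1$ on $V(\varpi_1)$, $B_1 v_2^2 = v_1^2$ and $B_2 v_1^2 = v_2^2$ on $V(\varpi_2)$ (with $B_2 v_1^1 = B_1 v_2^1 = B_1 v_1^2 = B_2 v_2^2 = 0$), together with matching eigenvalues of $K_{h_1 - h_2}$ (the weights of $v_1^1$ and $v_2^2$, and of $v_2^1$ and $v_1^2$, agree in $X^\imath$). From these formulas it is immediate that $K$ commutes with the action of $B_1$, $B_2$, and $K_{h_1 - h_2}$, so $K$ is a $\mathbf{U}^\imath$-module isomorphism. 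Alternatively, as in Lemma~\ref{lem: K type AI}, one may invoke \cite[Theorem 4.18]{BW18} to obtain a $\mathbf{U}^\imath$-module isomorphism $K'$ with $K'(v_1^1) = v_2^2$, and then $K'(v_2^1) = K'(B_1 v_1^1) = B_1 v_2^2 = v_1^2$ forces $K' = K$.

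It remains to show that $K$ is based, i.e., that it carries the $\imath$canonical basis $\mathbf{B}^\imath(\varpi_1)$ bijectively onto $\mathbf{B}^\imath(\varpi_2)$. The key observation is that, because $I_\bullet = \emptyset$ (so $\kappa_i = 0$) and the complementary $\mathrm{SL}_2$-factor acts trivially, the equalities $B_1 v_1^1 = v_2^1$ and $B_2 v_1^2 = v_2^2$ hold \emph{exactly}, without lower-order corrections. Consequently $v_2^1 = B_1 v_1^1$ lies in the crystal lattice and the $\mathcal{A}$-form of $V(\varpi_1)$ and is fixed by the $\imath$bar-involution, and its image under $\mathrm{ev}_\infty$ is the crystal basis element of $v_2^1$; exactly as in Lemma~\ref{lem: K type AI} this shows that $v_2^1$ is an $\imath$canonical basis element. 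Together with the fact that the highest weight vector $v_1^1$ is an $\imath$canonical basis element, we obtain $\mathbf{B}^\imath(\varpi_1) = \{v_1^1, v_2^1\}$, and in the same way $\mathbf{B}^\imath(\varpi_2) = \{v_1^2, v_2^2\}$ (in both cases the $\imath$canonical basis coincides with the ordinary canonical basis $\mathbf{B}(\varpi_i)$). Since $K(v_1^1) = v_2^2$, $K(v_2^1) = v_1^2$, and $\operatorname{Ker} K = 0$, it follows that $K$ is a based $\mathbf{U}^\imath$-module isomorphism.

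I do not expect a genuine obstacle here: the whole argument takes place inside two-dimensional modules over a rank-two Dynkin diagram. The only point that deserves attention is the assertion that $\mathbf{B}^\imath(\varpi_i) = \mathbf{B}(\varpi_i)$; this is precisely where the hypotheses $I_\bullet = \emptyset$, $\kappa_i = 0$, and the disconnectedness of the underlying $A_1 \times A_1$ diagram enter, ensuring that $B_i$ maps one canonical basis vector exactly to another.
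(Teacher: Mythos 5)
Your proposal is correct and follows essentially the same route as the paper. The paper's proof simply records the two identities $B_1 v_1^1 = v_2^1$ and $B_2 v_1^2 = v_2^2$ and then defers to the argument of Lemma~\ref{lem: K type AI}; you have unpacked exactly that argument (including the alternative route via \cite[Theorem 4.18]{BW18}) with all the small verifications written out, and those verifications all check.
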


\begin{proof}
  Noting that
  $$
  B_1 v^1_1 = v^1_2, \quad B_2 v^2_1 = v^2_2,
  $$
  the assertion can be proved by the same way as Lemma \ref{lem: K type AI}
\end{proof}

\begin{lem}\label{lem: w0 type AIII}
There exists $w_0 \in \mathcal{L}(\varpi)$ such that $\mathbf{U}^\imath w_0 \simeq \mathbb{Q}(q)$ and $\mathrm{ev}_\infty(w_0) = b_\varpi$.
\end{lem}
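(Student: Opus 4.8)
The plan is to repeat the proof of Lemma \ref{lem: w0 type AI} almost verbatim, the only difference being that the based $\mathbf{U}^\imath$-module isomorphism used is now the cross-node map $K \colon V(\varpi_1) \to V(\varpi_2)$ of Lemma \ref{lem: K type AIII}. Since the Dynkin diagram is of finite type with longest element $w_0 = s_1 s_2$ and $w_0 \varpi_2 = -\varpi_2$, I would identify $V(\varpi_2)$ with $V(-\varpi_2)$ as in Remark \ref{rem: finite type}; under this identification $K(v_1^1) = v_2^2$ is the lowest weight vector $v_{-\varpi_2}$ of $V(-\varpi_2)$. Then consider the composition
\[
g \colon V(\varpi) \xrightarrow{\ \chi_{\varpi_1,\varpi_2}\ } V(\varpi_1) \otimes V(\varpi_2) \xrightarrow{\ K \otimes \mathrm{id}\ } V(-\varpi_2) \otimes V(\varpi_2) \xrightarrow{\ \delta_{\varpi_2}\ } \mathbb{Q}(q),
\]
where $\chi_{\varpi_1,\varpi_2}$ and $\delta_{\varpi_2}$ are the based $\mathbf{U}$-module homomorphisms from Example \ref{ex: based mods} \eqref{ex: based mods chi} and \eqref{ex: based mods delta}. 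By restriction and by Lemma \ref{lem: K type AIII}, each arrow is a $\mathbf{U}^\imath$-module homomorphism preserving the crystal lattices, hence so is $g$; moreover $g(v_\varpi) = \delta_{\varpi_2}(v_{-\varpi_2} \otimes v_{\varpi_2}) = 1$, so $g$ is surjective.

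Next I would check that $\mathrm{ev}_\infty(g(G(b))) = \delta_{b,b_\varpi}$ for every $b \in \mathcal{B}(\varpi)$, which is a finite computation at $q = \infty$. Here $\mathcal{B}(\varpi) = \mathcal{B}(\varpi_1) \otimes \mathcal{B}(\varpi_2)$ has only four elements; $\chi_{\varpi_1,\varpi_2}$ is in fact an isomorphism (both sides are $4$-dimensional irreducible $\mathbf{U}$-modules) inducing a crystal isomorphism; $K \otimes \mathrm{id}$ induces the bijection interchanging $v_1^1 \leftrightarrow v_2^2$ and $v_2^1 \leftrightarrow v_1^2$ in the first tensor factor; and the crystal map induced by $\delta_{\varpi_2}$ sends $v_{-\varpi_2} \otimes v_{\varpi_2}$ to $1$ and the remaining three basis elements of $\mathcal{B}(\varpi_2) \otimes \mathcal{B}(\varpi_2)$ to $0$. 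Tracing $b_\varpi$ and the other three elements of $\mathcal{B}(\varpi)$ through this composite yields the claim.

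From here the argument is identical to that in Lemma \ref{lem: w0 type AI}: $\operatorname{Ker} g$ has a basis $\{ w_b \mid b \in \mathcal{B}(\varpi) \setminus \{ b_\varpi \} \}$ with $w_b \in \mathcal{L}(\varpi)$ and $\mathrm{ev}_\infty(w_b) = b$, the orthogonal complement $W_0$ of $\operatorname{Ker} g$ with respect to the inner product on $V(\varpi)$ (cf.\ Subsection \ref{subsec: V(pm lm)}) is spanned by a vector $w_0 \in \mathcal{L}(\varpi)$ with $\mathrm{ev}_\infty(w_0) = b_\varpi$, and $W_0 \simeq V(\varpi)/\operatorname{Ker} g \simeq \operatorname{Im} g = \mathbb{Q}(q)$ as $\mathbf{U}^\imath$-modules, so $\mathbf{U}^\imath w_0 = W_0 \simeq \mathbb{Q}(q)$. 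Since the genuinely new input, the isomorphism $K$, has already been supplied by Lemma \ref{lem: K type AIII}, I do not expect a serious obstacle; the only mild care needed is the identification $V(\varpi_2) = V(-\varpi_2)$ ensuring that the target of $K \otimes \mathrm{id}$ is the domain of $\delta_{\varpi_2}$, together with the bookkeeping of the four-element crystal in the second step.
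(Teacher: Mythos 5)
Your proposal is correct and takes essentially the same approach as the paper, which simply says to combine Lemma \ref{lem: K type AIII} with the argument of Lemma \ref{lem: w0 type AI}; you have spelled out what that adaptation amounts to, including the identification $V(\varpi_2) = V(-\varpi_2)$ via Remark \ref{rem: finite type} and the crystal computation at $q=\infty$, both of which check out.
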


\begin{proof}
  Using Lemma \ref{lem: K type AIII}, the same argument as in the proof of Lemma \ref{lem: w0 type AI} proves the assertion.
\end{proof}

\subsubsection*{Type \rm{AIV}}
Set
\begin{align}
\begin{split}
  &\varsigma_1 := 1,\ \varsigma_n := (-1)^n q^{n-1} \\
  &\varpi := \varpi_1 + \varpi_n.
\end{split} \label{eq: setting AIV}
\end{align}
Since
$$
\varpi_i + w_\bullet\tau\varpi_i = \varpi_1 + \varpi_n
$$
for all $i \in I$, we have
$$
\nu + w_\bullet\tau\nu \in \mathbb{Z}_{\geq 0} \varpi
$$
for all $\nu \in X^+$.

\begin{lem}\label{lem: calc AIV}
  We have $B_n v_n = v_{n+1} + v_1$.
\end{lem}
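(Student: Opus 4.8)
The plan is to mimic the proof of Lemma~\ref{lem: calc AII}: expand $B_n v_n$ from the defining formula $B_n = F_n + \varsigma_n T_{w_\bullet}(E_{\tau(n)}) K_n^{-1}$ (note $\kappa_n = 0$ by our standing assumptions) and reduce the second summand to a computation inside the $\mathbf{U}_{I_\bullet}$-standard submodule of $V(\varpi_1)$. Here $\tau(n) = n-n+1 = 1$ and, by \eqref{eq: setting AIV}, $\varsigma_n = (-1)^n q^{n-1}$. In the $(n+1)$-dimensional module $V(\varpi_1)$ we have $F_n v_n = v_{n+1}$, while $\langle h_n, \operatorname{wt}(v_n)\rangle = \langle h_n, \varpi_n - \varpi_{n-1}\rangle = 1$ gives $K_n^{-1} v_n = q^{-1} v_n$. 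Hence the lemma reduces to the identity $T_{w_\bullet}(E_1) v_n = (-1)^n q^{2-n} v_1$.

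For this I would first rewrite $T_{w_\bullet}(E_1) v_n = T_{w_\bullet}\bigl(E_1\, T_{w_\bullet}^{-1}(v_n)\bigr)$, using the compatibility of the algebra and module versions of $T_{w_\bullet}$ (cf.\ \cite[Proposition~5.2.2]{Lus10}), exactly as in Lemma~\ref{lem: calc AII}. The crux is then to evaluate $T_{w_\bullet}^{-1}(v_n)$. Since $I_\bullet = \{2,\dots,n-1\}$ is of type $A_{n-2}$ and $\langle h_j, \varpi_1\rangle = \langle h_j, \varpi_n\rangle = 0$ for all $j \in I_\bullet$, the vectors $v_2,\dots,v_n$ span a copy of the standard $\mathbf{U}_{I_\bullet}$-module inside $V(\varpi_1)$, with $v_2$ its highest and $v_n$ its lowest weight vector, and $w_\bullet$ acts on this submodule by reversing the coordinates $2,\dots,n$. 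Running through a reduced word for $w_\bullet$ and using that each $T_{s_j}$ sends the highest weight vector of a length-one $\mathfrak{sl}_2$-string to $-q$ times the lowest one, and the lowest back to the highest, one obtains $T_{w_\bullet}(v_2) = (-q)^{n-2} v_n$, whence $T_{w_\bullet}^{-1}(v_n) = (-q)^{-(n-2)} v_2 = (-1)^n q^{2-n} v_2$.

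It then remains to note that $E_1 v_2 = v_1$ and $T_{w_\bullet}(v_1) = v_1$ --- the latter because $v_1 = v_{\varpi_1}$ is annihilated by $E_j$ and $F_j$ for every $j \in I_\bullet$, hence fixed by each $T_{s_j}$ --- so that $T_{w_\bullet}(E_1) v_n = (-1)^n q^{2-n} v_1$ and
$$
B_n v_n = v_{n+1} + \varsigma_n q^{-1} (-1)^n q^{2-n} v_1 = v_{n+1} + (-1)^n q^{n-1} \cdot q^{-1} \cdot (-1)^n q^{2-n} v_1 = v_{n+1} + v_1,
$$
as desired. The one delicate point is the sign-and-$q$-power bookkeeping in the middle paragraph; it is precisely the choice $\varsigma_n = (-1)^n q^{n-1}$ in \eqref{eq: setting AIV} that is tuned to absorb the factor $(-q)^{n-2}$ produced by the braid action of $w_\bullet$ on the standard $\mathbf{U}_{I_\bullet}$-module.
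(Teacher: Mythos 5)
Your proposal is correct and follows essentially the same route as the paper: expand $B_n = F_n + \varsigma_n T_{w_\bullet}(E_1)K_n^{-1}$, move $T_{w_\bullet}$ outside via \cite[Proposition~5.2.2]{Lus10}, compute $T_{w_\bullet}^{-1}(v_n) = (-q)^{-(n-2)}v_2$ on the standard $\mathbf{U}_{I_\bullet}$-submodule, and finish with $E_1 v_2 = v_1$ and $T_{w_\bullet}(v_1)=v_1$. You merely spell out the braid-group computation of $T_{w_\bullet}^{-1}(v_n)$ that the paper leaves implicit in the line where all the signs and $q$-powers collapse to $1$.
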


\begin{proof}
  Using \cite[Proposition 5.2.2]{Lus10}, we compute as
  \begin{align*}
    &B_n v_n = (F_n + (-1)^nq^{n-1}T_{w_\bullet}(E_1)K_n^{-1})v_n \\
    &= v_{n+1} + (-1)^n q^{n-2} T_{w_\bullet}(E_1 T_{w_\bullet}^{-1}(v_n)) \\
    &= v_{n+1} + (-1)^2 T_{w_\bullet}(E_1 v_2) \\
    &= v_{n+1} + v_1.
  \end{align*}
  This proves the assertion.
\end{proof}

\begin{lem}\label{lem: K AIV}
  Let $K: V(\varpi_1) \rightarrow V(\varpi_1)$ denote the linear isomorphism givne by
  $$
  K(v_i) = \begin{cases}
    v_{n+1} & \text{ if } i = 1, \\
    v_i & \text{ if } i = 2,\ldots,n, \\
    v_1 & \text{ if } i = n+1.
  \end{cases}
  $$
  Then, it is a based $\mathbf{U}^\imath$-module isomorphism.
\end{lem}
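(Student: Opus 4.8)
The plan is to follow the strategy of Lemmas \ref{lem: K type AI} and \ref{lem: K type AIII}. First, by \cite[Theorem 4.18]{BW18} there is a $\mathbf{U}^\imath$-module isomorphism $K'\colon V(\varpi_1) \to V(\varpi_1)$. Since $w_\bullet\varpi_1 = \varpi_1$, $w_\bullet\varpi_n = \varpi_n$ and $\tau\varpi_1 = \varpi_n$, one checks that $\varpi_1 + \varpi_n$ is orthogonal to $Y^\imath$, so $v_1$ and $v_{n+1}$ lie in the same $X^\imath$-weight space; hence we may normalize $K'$ so that $K'(v_1) = v_{n+1}$. It then remains to prove $K' = K$ and, since $K$ is clearly bijective, that $K$ restricts to a bijection $\mathbf{B}^\imath(\varpi_1) \to \mathbf{B}^\imath(\varpi_1)$, which is what "based isomorphism" means here.

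To identify $K'$ with $K$, I would first note that $V(\varpi_1)$ is generated by $v_1$ as a $\mathbf{U}^\imath$-module: we have $B_1 v_1 = v_2$, $B_j v_j = v_{j+1}$ for $j \in I_\bullet = \{2,\dots,n-1\}$, and $B_n v_n = v_{n+1} + v_1$ by Lemma \ref{lem: calc AIV}. Propagating the normalization $K'(v_1) = v_{n+1}$ along these relations gives $K'(v_2) = B_1 K'(v_1) = B_1 v_{n+1}$, then $K'(v_j) = v_j$ for $j = 2,\dots,n$ \emph{provided} $B_1 v_{n+1} = v_2$, and finally $B_n K'(v_n) = B_n v_n = v_{n+1} + v_1$ forces $K'(v_{n+1}) = v_1$, i.e. $K' = K$. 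The one nontrivial input is the identity $B_1 v_{n+1} = v_2$, which is the $B_1$-analogue of Lemma \ref{lem: calc AIV}: since $v_{n+1}$ spans a trivial $\mathbf{U}_{I_\bullet}$-submodule, $T_{w_\bullet}^{-1}$ fixes it, so by \cite[Proposition 5.2.2]{Lus10} one has $B_1 v_{n+1} = \varsigma_1 T_{w_\bullet}(E_n)v_{n+1} = \varsigma_1 T_{w_\bullet}(v_n)$, and the choice $\varsigma_1 = 1$ from \eqref{eq: setting AIV} is precisely what makes this equal $v_2$.

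Finally, to show $K$ is based it suffices to check $\mathbf{B}^\imath(\varpi_1) = \{v_1,\dots,v_{n+1}\}$, since $K$ permutes this set. The vector $v_1$ is the highest weight vector, so $v_1 = G^\imath(b_1)$. The vectors $v_2 = B_1 v_1$ and $v_{j+1} = B_j v_j$ ($j \in I_\bullet$) are obtained from $v_1 = G^\imath(b_1)$ by applying $\imath$bar-invariant elements of $\mathbf{U}^\imath$, hence lie in $\mathcal{L}(\varpi_1) \cap {}_{\mathcal{A}}V(\varpi_1) \cap \psi^\imath(\mathcal{L}(\varpi_1))$ and reduce to $b_2,\dots,b_n$ modulo $q^{-1}\mathcal{L}(\varpi_1)$, so they equal $G^\imath(b_2),\dots,G^\imath(b_n)$; likewise $v_{n+1} = B_n v_n - v_1 = B_n G^\imath(b_n) - G^\imath(b_1)$ is $\psi^\imath$-invariant, lies in $\mathcal{L}(\varpi_1) \cap {}_{\mathcal{A}}V(\varpi_1)$, and reduces to $b_{n+1}$, so $v_{n+1} = G^\imath(b_{n+1})$. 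I expect the main obstacle to be the computation $B_1 v_{n+1} = v_2$ in the middle step (equivalently, evaluating $T_{w_\bullet}(v_n)$ and confirming that $\varsigma_1 = 1$ is the correctly calibrated parameter); the remaining arguments are direct transcriptions of those for types AI and AIII.
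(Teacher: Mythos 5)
Your proposal is correct and follows the same strategy as the paper, which is deliberately terse (the paper says only that the relations $v_{j+1} = B_j v_j$, $v_{n+1} = B_n v_n - v_1$ let one argue ``the same way as'' type AI). Your main contribution over what the paper writes is the identification and verification of the extra input $B_1 v_{n+1} = v_2$. Let me check it: since $v_{n+1}$ has weight $-\varpi_n$, $F_1 v_{n+1} = 0$ and $K_1^{-1}v_{n+1}=v_{n+1}$; since $E_j v_{n+1} = F_j v_{n+1} = 0$ and $\langle h_j,-\varpi_n\rangle = 0$ for $j\in I_\bullet$, $T_{w_\bullet}^{-1}(v_{n+1})=v_{n+1}$; and for each $j\in I_\bullet$ the vector being hit has $\langle h_j,\cdot\rangle\in\{0,-1\}$ and is annihilated by $F_j$, so by \cite[Proposition 5.2.2]{Lus10} $T_j$ acts trivially or by $E_j$ with no sign or $q$-power, giving $T_{w_\bullet}(v_n)=v_2$ exactly. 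Thus $B_1 v_{n+1} = \varsigma_1 T_{w_\bullet}(E_n)v_{n+1}=\varsigma_1 T_{w_\bullet}(v_n)=v_2$ with $\varsigma_1 = 1$, as you say. Your propagation $K'(v_1)=v_{n+1}\Rightarrow K'(v_j)=v_j$ ($2\le j\le n$) $\Rightarrow K'(v_{n+1})=B_n v_n - v_{n+1}=v_1$ then pins down $K'=K$, and the ``based'' part (each $v_j$ equals $G^\imath(b_j)$ because it is obtained from $G^\imath(b_1)=v_1$ by $\imath$bar-invariant operations, hence lies in $\mathcal{L}\cap{}_{\mathcal{A}}V\cap\psi^\imath(\mathcal{L})$ and reduces to $b_j$) is the verbatim analogue of the last step of Lemma \ref{lem: K type AI}. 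One small remark: your justification that $K'$ can be normalized so that $K'(v_1)=v_{n+1}$ (rather than $v_1$, or a combination) leans on the specific form of the intertwiner in \cite[Theorem 4.18]{BW18}, not just on the coincidence of $X^\imath$-weights of $v_1$ and $v_{n+1}$; the $X^\imath$-weight argument alone shows $K'(v_1)\in\mathbb{Q}(q)v_1\oplus\mathbb{Q}(q)v_{n+1}$, so a further normalization input from BW18 is being used. This is also implicit in the paper, so it is not a gap in your proposal relative to the paper's own level of detail.
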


\begin{proof}
  Noting that
  \begin{align*}
    &v_2 = B_1 v_1, \\
    &v_3 = B_2 v_2, \\
    &\cdots, \\
    &v_n = B_{n-1} v_{n-1}
  \end{align*}
  and that
  $$
  v_{n+1} = B_nv_n - v_1
  $$
  by Lemma \ref{lem: calc AIV}, the assertion can be proved by the same way as Lemma \ref{lem: K type AI}
\end{proof}

\begin{lem}\label{lem: w0 type AIV}
There exists $w_0 \in \mathcal{L}(\varpi)$ such that $\mathbf{U}^\imath w_0 \simeq \mathbb{Q}(q)$ and $\mathrm{ev}_\infty(w_0) = b_\varpi$.
\end{lem}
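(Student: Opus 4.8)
The plan is to follow the proof of Lemma \ref{lem: w0 type AI} almost verbatim, the only change being that the two tensor factors are now $V(\varpi_1)$ and $V(\varpi_n)$ rather than two copies of $V(\varpi_1)$. Since $\tau(1)=n$ and $-w_0\varpi_n=\varpi_1$, Remark \ref{rem: finite type} allows us to identify $V(\varpi_1)$ with the lowest weight module $V(-\varpi_n)$; under this identification the lowest weight vector of $V(\varpi_1)$ is $v_{n+1}$ (the element of weight $w_0\varpi_1$), which corresponds to $v_{-\varpi_n}$.

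First I would form the composition
$$
g\colon V(\varpi)\xrightarrow{\ \chi_{\varpi_1,\varpi_n}\ } V(\varpi_1)\otimes V(\varpi_n)\xrightarrow{\ K\otimes\mathrm{id}\ } V(\varpi_1)\otimes V(\varpi_n)\xrightarrow{\ \delta_{\varpi_n}\ }\mathbb{Q}(q),
$$
where $\chi_{\varpi_1,\varpi_n}$ and $\delta_{\varpi_n}$ are the based $\mathbf{U}$-module homomorphisms of Example \ref{ex: based mods} \eqref{ex: based mods chi} and \eqref{ex: based mods delta} (the latter via $V(\varpi_1)=V(-\varpi_n)$), and $K$ is the based $\mathbf{U}^\imath$-module isomorphism of Lemma \ref{lem: K AIV}. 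As in Lemma \ref{lem: w0 type AI}, $g$ is then a $\mathbf{U}^\imath$-module homomorphism preserving the crystal lattices. Because $\chi_{\varpi_1,\varpi_n}(v_\varpi)=v_{\varpi_1}\otimes v_{\varpi_n}=v_1\otimes v_{\varpi_n}$ and $K(v_1)=v_{n+1}=v_{-\varpi_n}$ by Lemma \ref{lem: K AIV}, we obtain $g(v_\varpi)=\delta_{\varpi_n}(v_{-\varpi_n}\otimes v_{\varpi_n})=1$; more generally, tracking the induced crystal maps exactly as in Lemma \ref{lem: w0 type AI} gives $\mathrm{ev}_\infty(g(G(b)))=\delta_{b,b_\varpi}$ for all $b\in\mathcal{B}(\varpi)$.

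From this point on the argument is identical to that of Lemma \ref{lem: w0 type AI}: $\operatorname{Ker} g$ is a codimension-one $\mathbf{U}^\imath$-submodule of $V(\varpi)$ admitting a basis $\{w_b\mid b\in\mathcal{B}(\varpi)\setminus\{b_\varpi\}\}$ with $w_b\in\mathcal{L}(\varpi)$ and $\mathrm{ev}_\infty(w_b)=b$, so its orthogonal complement $W_0$ with respect to the inner product on $V(\varpi)$ is spanned by a vector $w_0\in\mathcal{L}(\varpi)$ with $\mathrm{ev}_\infty(w_0)=b_\varpi$, and the claim follows from $\mathbf{U}^\imath w_0=W_0\simeq V(\varpi)/\operatorname{Ker} g\simeq\operatorname{Im} g=\mathbb{Q}(q)$.

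I do not expect a genuine obstacle, since the one substantive ingredient — that the explicit map $K$ on $V(\varpi_1)$ is a based $\mathbf{U}^\imath$-module isomorphism — has already been established in Lemma \ref{lem: K AIV} (via Lemma \ref{lem: calc AIV}). The only point that requires care is the bookkeeping forced by the fact that, unlike in type AI, $V(\varpi_1)\neq V(-\varpi_1)$ for $A_n$ with $n\geq2$: one must tensor with $V(\varpi_n)$ and use the identification $V(\varpi_1)=V(-\varpi_n)$ so that $K$ carries the highest weight vector $v_1$ to precisely the lowest weight vector $v_{n+1}=v_{-\varpi_n}$ that $\delta_{\varpi_n}$ pairs off. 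Once this is set up correctly, both the computation of $\mathrm{ev}_\infty(g(G(b)))$ and the inner-product splitting argument go through word for word as in type AI.
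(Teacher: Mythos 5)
Your proposal is correct and follows exactly the route the paper intends: the paper's proof of this lemma is the single sentence ``Using Lemma~\ref{lem: K AIV}, the same argument as in the proof of Lemma~\ref{lem: w0 type AI} proves the assertion,'' and what you have written is precisely the expansion of that sentence, with the composition $\delta_{\varpi_n}\circ(K\otimes\mathrm{id})\circ\chi_{\varpi_1,\varpi_n}$ replacing $\delta_{\varpi_1}\circ(K\otimes\mathrm{id})\circ\chi_{\varpi_1,\varpi_1}$ and the identification $V(\varpi_1)=V(-\varpi_n)$ from Remark~\ref{rem: finite type} used so that $K(v_1)=v_{n+1}=v_{-\varpi_n}$ pairs correctly under $\delta_{\varpi_n}$. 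The bookkeeping you flag is indeed the only genuine change, and you have handled it correctly.
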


\begin{proof}
  Using Lemma \ref{lem: K AIV}, the same argument as in the proof of Lemma \ref{lem: w0 type AI} proves the assertion.
\end{proof}

\subsection{Type \rm{BII}}
Let $n \geq 2$ and consider the Dynkin diagram $I = \{ 1,\ldots,n \}$ of type $B_n$.
Set $L := \{ 1,\ldots,n,0,\bar{n},\ldots,\bar{1} \}$, and equip the set $\mathcal{B} := \{ b_i \mid i \in L \}$ with the crystal structure as in \cite[Example 2.22]{BS17}.
Then, $\mathcal{B}$ is identical to $\mathcal{B}(\varpi_1)$.
For each $i \in L$, set $v_i := G(b_i) \in V(\varpi_1)$.

Set
\begin{align}
\begin{split}
  &\varsigma_1 := q^{2n-3}, \\
  &\varpi := \varpi_1.
\end{split} \label{eq: setting BII}
\end{align}
Since
$$
\varpi_i + w_\bullet\tau\varpi_i = \begin{cases}
  2\varpi_1 & \text{ if } i \neq n, \\
  \varpi_1 & \text{ if } i = n
\end{cases}
$$
for all $i \in I$, we have
$$
\nu + w_\bullet\tau\nu \in \mathbb{Z}_{\geq 0} \varpi
$$
for all $\nu \in X^+$.

\begin{lem}\label{lem: calc BII}
  We have $B_1 v_{\bar{1}} = q^{2n-1} v_2$.
\end{lem}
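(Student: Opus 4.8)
The plan is to compute $B_1 v_{\bar 1}$ directly, in the spirit of Lemmas \ref{lem: calc AII} and \ref{lem: calc AIV}. Since $\tau = \mathrm{id}$ and $\kappa_1 = 0$, we have $B_1 = F_1 + \varsigma_1 T_{w_\bullet}(E_1) K_1^{-1}$ with $\varsigma_1 = q^{2n-3}$. The vector $v_{\bar 1}$ is the lowest weight vector of $V(\varpi_1)$: its weight $-\varpi_1$ is the lowest weight since $w_0 = -1$ in type $B_n$. Hence $F_1 v_{\bar 1} = 0$. Moreover $\langle h_1, -\varpi_1 \rangle = -1$, so $K_1^{-1} v_{\bar 1} = q_1 v_{\bar 1} = q^2 v_{\bar 1}$, using that $d_1 = 2$ in type $B_n$. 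Therefore $B_1 v_{\bar 1} = q^{2n-1}\, T_{w_\bullet}(E_1) v_{\bar 1}$, and it remains to prove $T_{w_\bullet}(E_1) v_{\bar 1} = v_2$.

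Next I would reduce this to the evaluation of $T_{w_\bullet}$ on an extremal vector. As $v_{\bar 1}$ is the lowest weight vector, $F_j v_{\bar 1} = 0$ for all $j$; and $E_j v_{\bar 1} = 0$ for $j \in I_\bullet = \{2,\ldots,n\}$ because $-\varpi_1 + \alpha_j$ is not a weight of $V(\varpi_1)$. Thus $v_{\bar 1}$ is fixed by $E_j$ and $F_j$ for every $j \in I_\bullet$, so $T_{w_\bullet} v_{\bar 1} = v_{\bar 1}$ (only the $a=b=c=0$ term in Lusztig's formula survives), and by the $T_{w_\bullet}$-equivariance of the action (\cite[Proposition 5.2.2]{Lus10}),
$$
T_{w_\bullet}(E_1) v_{\bar 1} = T_{w_\bullet}\bigl(E_1 T_{w_\bullet}^{-1}(v_{\bar 1})\bigr) = T_{w_\bullet}(E_1 v_{\bar 1}).
$$
From the explicit crystal structure of $\mathcal{B}(\varpi_1) = \mathcal{B}$ (\cite[Example 2.22]{BS17}) we have $\tilde{E}_1 b_{\bar 1} = b_{\bar 2}$ with $\varepsilon_1(b_{\bar 1}) = 1$; since $b_{\bar 2}$ is the unique basis vector of $V(\varpi_1)$ of weight $\operatorname{wt}(b_{\bar 2})$ and $\varepsilon_1(b_{\bar 2}) = 0$, there is no lower-order correction, so $E_1 v_{\bar 1} = v_{\bar 2}$. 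Hence the claim reduces to $T_{w_\bullet}(v_{\bar 2}) = v_2$.

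The last equality is where the real work lies. The element $b_{\bar 2}$ is the $I_\bullet$-lowest weight element of the $I_\bullet$-connected component $C_{I_\bullet}(b_{\bar 2}) \subset \mathcal{B}(\varpi_1)$, which is isomorphic, as a crystal of $I_\bullet$-type $B_{n-1}$, to the vector representation, with $I_\bullet$-highest weight element $b_2$; correspondingly $v_{\bar 2}$ and $v_2$ are the two extremal vectors of the $\mathbf{U}_{I_\bullet}$-submodule generated by $v_2$. Evaluating $T_{w_\bullet}(v_{\bar 2})$ amounts to running Lusztig's formula for $T''_{j,1}$ on this $B_{n-1}$-module node by node along a reduced expression for $w_\bullet$; one finds $T_{w_\bullet}(v_{\bar 2}) = v_2$, and therefore $B_1 v_{\bar 1} = q^{2n-1} v_2$ as stated. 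I expect this last computation to be the main obstacle: unlike the minuscule representations that occur in the type $A$ cases, the vector representation of $B_{n-1}$ is not minuscule (it has a zero weight space), so the intermediate divided-power terms in Lusztig's formula do not collapse to a single summand, and one must verify that the accumulated powers of $q$ and signs cancel so that the coefficient is exactly $1$. The value $\varsigma_1 = q^{2n-3}$ is calibrated precisely so that, together with the factor $q^2$ coming from $K_1^{-1} v_{\bar 1}$, the outcome is the stated $q^{2n-1} v_2$.
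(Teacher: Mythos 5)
Your argument reproduces the paper's proof step for step: both use $F_1 v_{\bar 1} = 0$ and $K_1^{-1} v_{\bar 1} = q^2 v_{\bar 1}$ (since $d_1 = 2$) to extract $q^{2n-1}$, both invoke the $T_{w_\bullet}$-equivariance of \cite[Proposition 5.2.2]{Lus10} together with $T_{w_\bullet}^{-1}(v_{\bar 1}) = v_{\bar 1}$, both compute $E_1 v_{\bar 1} = v_{\bar 2}$, and both conclude with $T_{w_\bullet}(v_{\bar 2}) = v_2$. The last equality, which you single out as the ``main obstacle,'' is stated without justification in the paper's proof as well, so you are not missing anything the paper supplies, and your caution is reasonable rather than mistaken.

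That said, the step is less delicate than you fear, and the zero weight space of the $B_{n-1}$ vector representation is not actually in play. The vectors $v_{\bar 2}$ and $v_2$ are the lowest and highest \emph{extremal} global-basis vectors of the irreducible $\mathbf{U}_{I_\bullet}$-submodule they generate, and $T_{w_\bullet}$ is evaluated one simple reflection at a time along a reduced word. On an extremal global-basis vector $v_{w\eta}$ with $\langle h_j, w\eta \rangle = -m \leq 0$, Lusztig's formula for $T''_{j,1}$ reduces (after the telescoping you see in the rank-one computation) to $E_j^{(m)} v_{w\eta} = v_{s_j w\eta}$, with coefficient exactly $1$; the nontrivial $q$-powers and signs appear only in the opposite direction $\langle h_j, w\eta \rangle > 0$. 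Since $\eta = \operatorname{wt}(v_2)$ is $I_\bullet$-dominant and one moves from $v_{\bar 2} = v_{w_\bullet \eta}$ up to $v_2 = v_\eta$, every step is of the coefficient-one kind, so $T_{w_\bullet}(v_{\bar 2}) = v_2$ and your proof is complete.
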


\begin{proof}
  Using \cite[Proposition 5.2.2]{Lus10}, we compute as
  \begin{align*}
    &B_1 v_{\bar{1}} = (F_1 + q^{2n-3} T_{w_\bullet}(E_1)K_1^{-1}) v_{\bar{1}} \\
    &= q^{2n-1} T_{w_\bullet}(E_1 T_{w_\bullet}^{-1}(v_{\bar{1}})) \\
    &= q^{2n-1} T_{w_\bullet}(E_1 v_{\bar{1}}) \\
    &= q^{2n-1} T_{w_\bullet}(v_{\bar{2}}) \\
    &= q^{2n-1} v_2.
  \end{align*}
  This proves the assertion.
\end{proof}

\begin{lem}\label{lem: w0 type BII}
There exists $w_0 \in \mathcal{L}(\varpi)$ such that $\mathbf{U}^\imath w_0 \simeq \mathbb{Q}(q)$ and $\mathrm{ev}_\infty(w_0) = b_\varpi$.
\end{lem}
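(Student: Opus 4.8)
The plan is to exhibit $w_0$ by hand, exactly as in the type AII case (Lemma \ref{lem: w0 type AII}). Here $\varpi = \varpi_1$, so $V(\varpi)$ is the $(2n+1)$-dimensional vector representation with crystal basis $\{v_i \mid i \in L\}$, and I will look for $w_0$ inside $\mathbb{Q}(q)v_1 \oplus \mathbb{Q}(q)v_{\bar{1}}$, where $v_1 = G(b_1)$ is the highest weight vector and $v_{\bar{1}} = G(b_{\bar{1}})$ the lowest weight vector.

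First I would record that every vector of $\mathbb{Q}(q)v_1 \oplus \mathbb{Q}(q)v_{\bar{1}}$ is annihilated by $E_j$ and $B_j = F_j$ for all $j \in I_\bullet = \{2,\dots,n\}$ and fixed by $K_h$ for all $h \in Y^\imath$. Indeed, for $j \in I_\bullet$ one reads off from the weights of the vector representation that $V(\varpi_1)_{\pm\varpi_1 \pm \alpha_j} = 0$, which forces $E_j v_1 = F_j v_1 = E_j v_{\bar{1}} = F_j v_{\bar{1}} = 0$; and for $h \in Y^\imath$ we have $h = -w_\bullet h$ (as $\tau = \mathrm{id}$), so, using $w_\bullet \varpi_1 = \varpi_1$, $\langle h, \varpi_1\rangle = -\langle w_\bullet h, \varpi_1\rangle = -\langle h, \varpi_1\rangle$, whence $\langle h, \varpi_1\rangle = 0$ and $K_h$ fixes $v_1$ and $v_{\bar{1}}$. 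Consequently, any nonzero $w_0 \in \mathbb{Q}(q)v_1 \oplus \mathbb{Q}(q)v_{\bar{1}}$ with $B_1 w_0 = 0$ is killed by all $B_i$ ($i \in I$) and all $E_j$ ($j \in I_\bullet$) and fixed by every $K_h$; since $\kappa_1 = 0$, this says exactly that $\mathbf{U}^\imath w_0 = \mathbb{Q}(q)w_0$ is a copy of the trivial module $\mathbb{Q}(q)$. So it remains only to arrange $B_1 w_0 = 0$.

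Next I would compute $B_1$ on $v_1$ and $v_{\bar{1}}$. On $v_1$ the term $\varsigma_1 T_{w_\bullet}(E_1)K_1^{-1}$ contributes nothing: $K_1^{-1}v_1$ is a scalar multiple of $v_1$, and $T_{w_\bullet}(E_1)v_1 = T_{w_\bullet}\bigl(E_1 T_{w_\bullet}^{-1} v_1\bigr) = T_{w_\bullet}(E_1 v_1) = 0$, because $v_1$ is $\mathbf{U}_{I_\bullet}$-invariant (hence $T_{w_\bullet}$-fixed) and highest weight. So $B_1 v_1 = F_1 v_1 = v_2$ by a direct computation in the vector representation, while Lemma \ref{lem: calc BII} gives $B_1 v_{\bar{1}} = q^{2n-1}v_2$. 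Setting
$$
w_0 := v_1 - q^{-(2n-1)}v_{\bar{1}},
$$
we obtain $B_1 w_0 = v_2 - q^{-(2n-1)}q^{2n-1}v_2 = 0$, hence $\mathbf{U}^\imath w_0 \simeq \mathbb{Q}(q)$; and since $n \geq 2$ we have $q^{-(2n-1)} \in q^{-1}\mathbf{A}_\infty$, so $w_0 \in \mathcal{L}(\varpi)$ and $\mathrm{ev}_\infty(w_0) = \mathrm{ev}_\infty(v_1) = b_1 = b_\varpi$, as required.

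I do not expect a genuine obstacle here; the whole argument is a short explicit computation, entirely parallel to types AI--AIV. The only point needing a little care is the normalization $F_1 v_1 = v_2$: if a direct check instead produced $F_1 v_1 = c\,v_2$ for some $c \in 1 + q^{-1}\mathbf{A}_\infty$, one would simply replace $w_0$ by $v_1 - c\,q^{-(2n-1)}v_{\bar{1}}$, which still lies in $\mathcal{L}(\varpi)$ and satisfies $\mathrm{ev}_\infty(w_0) = b_\varpi$, so the conclusion is unchanged.
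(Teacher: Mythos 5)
Your proposal produces the same vector $w_0 = v_1 - q^{-2n+1}v_{\bar{1}}$ as the paper and uses the same ingredients (Lemma \ref{lem: calc BII} together with the direct computation $B_1 v_1 = v_2$ and the vanishing of $E_j,F_j$ on the highest and lowest weight vectors for $j \in I_\bullet$); it simply spells out the ``direct calculation'' that the paper leaves implicit. The argument is correct and matches the paper's approach.
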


\begin{proof}
  By direct calculation and Lemma \ref{lem: calc BII}, we see that the vector
  $$
  w_0 := v_1 - q^{-2n+1}v_{\bar{1}} \in V(\varpi)
  $$
  spans the $\mathbf{U}^\imath$-submodule isomorphic to $\mathbb{Q}(q)$.
  Clearly, we have $w_0 \in \mathcal{L}(\varpi)$ and $\mathrm{ev}_\infty(w_0) = b_1 = b_\varpi$.
  Thus, the proof completes.
\end{proof}

\subsection{Type \rm{CII}}
Let $n \geq 3$ and consider the Dynkin diagram $I = \{ 1,\ldots,n \}$ of type $C_n$.
Set $L := \{ 1,\ldots,n, \bar{n}, \ldots, \bar{1} \}$, and equip the set $\mathcal{B} := \{ b_i \mid i \in L \}$ with the crystal structure as in \cite[Example 2.23]{BS17}.
Then, $\mathcal{B}$ is identical to $\mathcal{B}(\varpi_1)$.

The $\mathbf{U}$-module $V(\varpi_2)$ can be embedded into $V(\varpi_1) \otimes V(\varpi_1)$ in a way such that
$$
v_{\varpi_2} \mapsto v_2 \otimes v_1 - q^{-1} v_1 \otimes v_2.
$$
Then, the crystal basis $\mathcal{B}(\varpi_2)$ is identified with the connected component $C(b_2 \otimes b_1)$ of $\mathcal{B} \otimes \mathcal{B}$.
For each $(i,j) \in L^2$ such that $b_i \otimes b_j \in \mathcal{B}(\varpi_2)$, set
$$
  v_{i,j} = \begin{cases}
    v_{\bar{k}} \otimes v_k + q^{-1} v_{\overline{k-1}} \otimes v_{k-1} \\
    - q^{-1} v_{k-1} \otimes v_{\overline{k-1}} - q^{-2} v_k \otimes v_{\bar{k}} & \text{ if } (i,j) = (\overline{k}, k) \text{ for some } k \in \{2,\ldots,n\}, \\
    v_{i} \otimes v_{j} - q^{-1} v_{j} \otimes v_{i} & \text{ otherwise}.
  \end{cases}
$$
Then, $\{ v_{i,j} \mid b_i \otimes b_j \in \mathcal{B}(\varpi_2) \}$ forms a free basis of $\mathcal{L}(\varpi_2)$.

For each $i \in I$ and $k \in \{2,\ldots,n\}$, one can straightforwardly verify that
\begin{align}
  F_i v_{\bar{k},k} = \begin{cases}
    [2] v_{\bar{\imath}, i+1}, & \text{ if } k = i+1, \\
    v_{\bar{\imath}, i+1} & \text{ if } k = i+2 \text{ or } k = i < n, \\
    0 & \text{ otherwise},
  \end{cases} \label{eq: calc CII F}
\end{align}
and
\begin{align}  
  E_i v_{\bar{k},k} = \begin{cases}
    [2] v_{\overline{i+1}, i}, & \text{ if } k = i+1, \\
    v_{\overline{i+1}, i} & \text{ if } k = i+2 \text{ or } k = i < n, \\
    0 & \text{ otherwise}.
  \end{cases} \label{eq: calc CII E}
\end{align}

\begin{lem}\label{lem: calc CII 2}
Set
$$
w'_0 := -\frac{1}{[2]} v_{\bar{2}, 2} + \sum_{k=3}^n (-1)^{k-3} \frac{[n-k+1]}{[n-2]} v_{\bar{k}, k}.
$$
Then, we have $E_j w'_0 = F_j w'_0 = 0$ for all $j \in I_\bullet$.
\end{lem}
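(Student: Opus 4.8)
The plan is a direct computation using the explicit formulas \eqref{eq: calc CII E} and \eqref{eq: calc CII F}. Write $w'_0 = \sum_{k=2}^{n} c_k\, v_{\bar k, k}$, so that $c_2 = -1/[2]$ and $c_k = (-1)^{k-3}[n-k+1]/[n-2]$ for $3 \le k \le n$; put $c_k := 0$ for $k < 2$, and note that the latter formula returns $0$ also at $k = n+1$, since $[0] = 0$. For $j \in I_\bullet$ with $j < n$, the case distinction in \eqref{eq: calc CII E} (resp.\ \eqref{eq: calc CII F}) shows that $E_j v_{\bar k, k}$ (resp.\ $F_j v_{\bar k, k}$) is nonzero only for $k \in \{j, j+1, j+2\}$, with coefficients $1$, $[2]$, $1$, and equals a multiple of the single vector $v_{\overline{j+1}, j}$ (resp.\ $v_{\bar\jmath, j+1}$). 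Hence
\[
E_j w'_0 = \gamma_j\, v_{\overline{j+1},\, j}, \qquad F_j w'_0 = \gamma_j\, v_{\bar\jmath,\, j+1}, \qquad \gamma_j := c_j + [2]\,c_{j+1} + c_{j+2}.
\]
Since $v_{\overline{j+1}, j}$ and $v_{\bar\jmath, j+1}$ are nonzero (being of the form $v_p \otimes v_q - q^{-1} v_q \otimes v_p$ with $p \ne q$), it suffices to prove $\gamma_j = 0$. For $j = n$ there is nothing to do: no case of \eqref{eq: calc CII E} or \eqref{eq: calc CII F} applies to any $v_{\bar k, k}$, the only candidate $k = n$ failing the constraint $k = i < n$ and $k = n \pm 1$ falling outside $\{2, \dots, n\}$; hence $E_n w'_0 = F_n w'_0 = 0$.

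It remains to verify $\gamma_j = 0$ for $j \in I_\bullet$ with $j < n$, namely for $j = 1$ and for $3 \le j \le n-1$. For $j = 1$ we have $c_1 = 0$, so
\[
\gamma_1 = [2]\,c_2 + c_3 = [2]\cdot(-1/[2]) + [n-2]/[n-2] = -1 + 1 = 0,
\]
the anomalous value $c_2 = -1/[2]$ being chosen precisely for this cancellation. For $3 \le j \le n-1$, each of $c_j, c_{j+1}, c_{j+2}$ is given by the closed formula (with $c_{n+1} = 0$ when $j = n-1$), and substitution gives
\[
\gamma_j = \frac{(-1)^{j-3}}{[n-2]}\bigl([n-j+1] - [2]\,[n-j] + [n-j-1]\bigr) = 0
\]
by the identity $[2][m] = [m+1] + [m-1]$ (with $[0] = 0$, which also handles $j = n-1$). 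Since $\{1\} \cup \{3, \dots, n-1\} \cup \{n\} = I_\bullet$, the proof is complete.

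I do not anticipate a genuine obstacle: granted \eqref{eq: calc CII E} and \eqref{eq: calc CII F}, the argument is bookkeeping resting on an elementary $q$-number identity. The points needing care are all at the boundary of the range --- the special coefficient $c_2 = -1/[2]$, the truncation of the defining sum at $k = n$, and the degenerate behaviour of $E_n$ and $F_n$ --- together with the routine observation that the case distinctions in \eqref{eq: calc CII E} and \eqref{eq: calc CII F} run in parallel, so that a single scalar $\gamma_j$ governs both $E_j w'_0$ and $F_j w'_0$.
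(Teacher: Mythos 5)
Your proof is correct and follows essentially the same computation as the paper: collect the coefficient $\gamma_j = c_j + [2]c_{j+1} + c_{j+2}$ using \eqref{eq: calc CII F} and kill it with $[2][m]=[m+1]+[m-1]$, handling $j=1$, $j=n$, and the truncation at $k=n$ by hand. The only cosmetic difference is that the paper deduces $E_j w'_0 = 0$ from $F_j w'_0 = 0$ by noting $w'_0$ has weight $0$ (so a lowest weight vector of weight $0$ for the $j$-th $\mathfrak{sl}_2$ is also a highest weight vector), whereas you observe directly that \eqref{eq: calc CII E} and \eqref{eq: calc CII F} share the same scalar $\gamma_j$; both routes are equally short.
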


\begin{proof}
  Since $w'_0$ is a weight vector of weight $0$, the identity $E_j w'_0 = 0$ follows from $F_j w'_0 = 0$.

  By equation \eqref{eq: calc CII F}, we have
  $$
  F_i w'_0 = \begin{cases}
    [2] \cdot (-\frac{1}{[2]}) + 1 & \text{ if } i = 1, \\
    \frac{(-1)^{i-3}}{[n-2]}([n-i+1] - [2][n-i] + [n-i-1]) & \text{ if } i = 3,\ldots,n-1, \\
    0 & \text{ if } i = n.
  \end{cases}
  $$
  Now, the assertion follows from a well-known identity
  $$
  [2][m] = [m+1] + [m-1]
  $$
  valid for all $m \in \mathbb{Z}$.
  Thus, the proof completes.
\end{proof}

Set
\begin{align}
\begin{split}
  &\varsigma_2 := q^{n-1}, \\
  &\varpi := \varpi_2.
\end{split} \label{eq: setting CII}
\end{align}
Since
$$
\varpi_i + w_\bullet\tau\varpi_i = \begin{cases}
  \varpi_2 & \text{ if } i = 1 ,\\
  2\varpi_2 & \text{ if } i \neq 1
\end{cases}
$$
for all $i \in I$, we have
$$
\nu + w_\bullet\tau\nu \in \mathbb{Z}_{\geq 0} \varpi
$$
for all $\nu \in X^+$.

\begin{lem}\label{lem: w0 type CII}
There exists $w_0 \in \mathcal{L}(\varpi)$ such that $\mathbf{U}^\imath w_0 \simeq \mathbb{Q}(q)$ and $\mathrm{ev}_\infty(w_0) = b_\varpi$.
\end{lem}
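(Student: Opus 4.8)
The plan is to exhibit $w_0$ explicitly, as in the proof of Lemma \ref{lem: w0 type BII}. First I would record that $V(\varpi) = V(\varpi_2)$ contains, besides the highest weight vector $v_\varpi = v_{2,1}$ and the lowest weight vector $v_{\bar 1,\bar 2}$ (of weight $-\varpi_2$), the weight-$0$ vector $w'_0$ of Lemma \ref{lem: calc CII 2}; all three are annihilated by $E_j$ and $F_j$ for every $j \in I_\bullet$, and all three have $\mathbf{U}$-weight in $\mathbb{Z}\varpi_2$. Since $\tau(2) = 2$, we have $\langle h,\varpi_2\rangle = 0$ for all $h \in Y^\imath$, so any $\mathbb{Q}(q)$-linear combination $w_0$ of the three is fixed by $K_h$ ($h \in Y^\imath$) and killed by $E_j$ and $B_j = F_j$ ($j \in I_\bullet$). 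Hence $\mathbf{U}^\imath w_0 = \mathbb{Q}(q)w_0 \simeq \mathbb{Q}(q)$ as soon as $B_2 w_0 = 0$; and if moreover $w_0 = v_{2,1} + a w'_0 + b v_{\bar 1,\bar 2}$ with $a,b \in q^{-1}\mathbf{A}_\infty$, then $w_0 \in \mathcal{L}(\varpi)$ and $\mathrm{ev}_\infty(w_0) = b_{2,1} = b_\varpi$. So everything reduces to finding such $a,b$ with $B_2 w_0 = 0$.

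Next I would compute $B_2 = F_2 + q^{n-1}T_{w_\bullet}(E_2)K_2^{-1}$ (recall $\varsigma_2 = q^{n-1}$ and $\kappa_2 = 0$) on each of the three vectors. For the $T_{w_\bullet}$-part I would use \cite[Proposition 5.2.2]{Lus10} to write $T_{w_\bullet}(E_2)\cdot m = T_{w_\bullet}\bigl(E_2\cdot T_{w_\bullet}^{-1}(m)\bigr)$, together with the following facts: $T_{w_\bullet}$ preserves the crystal lattice $\mathcal{L}(\varpi)$ and acts on each one-dimensional extremal weight space by a unit of $\mathbf{A}_\infty$; $T_{w_\bullet}$ fixes $w'_0$ (each $T_j$ with $j \in I_\bullet$ fixes a vector killed by $E_j,F_j$ whose weight is orthogonal to $\alpha_j$) and fixes $v_{2,1}$, $v_{\bar 1,\bar 2}$ up to a unit of $\mathbf{A}_\infty$ (they are extremal, and $w_\bullet$ fixes $\pm\varpi_2$); $E_2 v_{2,1} = 0$; and the action of $E_2$, $F_2$ on $w'_0$ and $v_{\bar 1,\bar 2}$, read off from \eqref{eq: calc CII F} and \eqref{eq: calc CII E} (a short calculation gives that $E_2 w'_0$ and $F_2 w'_0$ are the same ratio of quantum integers times $v_{\bar 3,2}$ and $v_{\bar 2,3}$ respectively, that $E_2 v_{\bar 1,\bar 2} = v_{\bar 1,\bar 3}$, and that $F_2 v_{\bar 1,\bar 2} = 0$). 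A weight count then shows that $B_2 w_0$ lies in the sum of the two one-dimensional weight spaces spanned by $v_{3,1}$ and $v_{\bar 2,3}$: its $v_{3,1}$-coordinate involves $v_{2,1}$ and $a$ only (through $T_{w_\bullet}(E_2)K_2^{-1}w'_0$), while its $v_{\bar 2,3}$-coordinate involves $a$ (through $F_2 w'_0$) and $b$ (through $T_{w_\bullet}(E_2)K_2^{-1}v_{\bar 1,\bar 2}$). Setting both coordinates equal to $0$ gives a triangular pair of linear equations that determines $a$, and then $b$, uniquely.

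Solving these, one finds $a$ to be a unit of $\mathbf{A}_\infty$ times $q^{-n}$ and $b$ a unit of $\mathbf{A}_\infty$ times $q^{-2n+1}$; since $n \geq 3$, both lie in $q^{-1}\mathbf{A}_\infty$, so the resulting $w_0 = v_{2,1} + a w'_0 + b v_{\bar 1,\bar 2}$ has all the properties demanded by the first paragraph. This is precisely the point at which the choice $\varsigma_2 = q^{n-1}$ pays off, exactly as $\varsigma_1 = q^{2n-3}$ did in type BII. I expect the only real difficulty to be the bookkeeping in the second step: one has to pin down the precise scalars (signs and powers of $q$) produced by $T_{w_\bullet}$ on the extremal weight vectors $v_{\bar 3,2}$ and $v_{\bar 1,\bar 3}$, and by $K_2^{-1}$, in order to confirm that $a$ and $b$ really have the claimed $q$-degrees; the rest is formal.
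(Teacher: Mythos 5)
Your proposal matches the paper's own proof in essence. The paper exhibits $w_0$ explicitly as $v_{2,1} - \frac{q^{-n+1}[2][n-2]}{[n]}\, w'_0 + q^{-2n+1}\, v_{\bar 1,\bar 2}$ and verifies $B_2 w_0 = 0$ by direct calculation from \eqref{eq: calc CII F}, \eqref{eq: calc CII E} and Lemma \ref{lem: calc CII 2}; the $q$-orders you predict for the coefficients ($q^{-n}$ and $q^{-2n+1}$ times units of $\mathbf{A}_\infty$) agree exactly with this formula, and your reduction to the single condition $B_2 w_0 = 0$ via the weight/$\mathbf{U}_{I_\bullet}$-invariance observation is precisely what underlies the paper's ``direct calculation.''
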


\begin{proof}
  By direct calculation, equations \eqref{eq: calc CII F} and \eqref{eq: calc CII E}, and Lemma \ref{lem: calc CII 2}, we see that the vector
  $$
  w_0 := v_{2,1} - \frac{q^{-n+1}[2][n-2]}{[n]} w'_0 + q^{-2n+1} v_{\bar{1}, \bar{2}}
  $$
  spans the $\mathbf{U}^\imath$-submodule isomorphic to $\mathbb{Q}(q)$.
  Clearly, we have $w_0 \in \mathcal{L}(\varpi)$ and $\mathrm{ev}_\infty(w_0) = b_{2,1} = b_\varpi$.
  Thus, the proof completes.
\end{proof}

\subsection{Type \rm{DII}}
Let $n \geq 4$ and consider the Dynkin diagram $I = \{ 1,\ldots,n \}$ of type $D_n$.
Set $L := \{ 1,\ldots,n,\bar{n},\ldots,\bar{1} \}$, and equip the set $\mathcal{B} := \{ b_i \mid i \in L \}$ with the crystal structure as in \cite[Example 2.24]{BS17}.
Then, $\mathcal{B}$ is identical to $\mathcal{B}(\varpi_1)$.
For each $i \in L$, set $v_i := G(b_i) \in V(\varpi_1)$.

Set
\begin{align}
\begin{split}
  &\varsigma_1 := q^{n-2}, \\
  &\varpi := \varpi_1.
\end{split} \label{eq: setting DII}
\end{align}
Since
$$
\varpi_i + w_\bullet\tau\varpi_i = \begin{cases}
  2\varpi_1 & \text{ if } i \neq n-1,n ,\\
  \varpi_1 & \text{ if } i = n-1,n
\end{cases}
$$
for all $i \in I$, we have
$$
\nu + w_\bullet\tau\nu \in \mathbb{Z}_{\geq 0} \varpi
$$
for all $\nu \in X^+$.

\begin{lem}\label{lem: calc DII}
  We have $B_1 v_{\bar{1}} = q^{n-1} v_2$.
\end{lem}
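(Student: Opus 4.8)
The plan is to repeat, with the type $D_n$ normalizations, the computation carried out in Lemma~\ref{lem: calc BII}. Since $\tau(1)=1$ and $\kappa_1=0$, the choice \eqref{eq: setting DII} gives $B_1 = F_1 + q^{n-2}\,T_{w_\bullet}(E_1)\,K_1^{-1}$, and I would evaluate this operator on $v_{\bar 1} = G(b_{\bar 1})$.

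First I would record three elementary facts. The weight of $v_{\bar 1}$ is the lowest weight $-\varpi_1$ of $V(\varpi_1)$, so $F_1 v_{\bar 1} = 0$ by weight reasons; since $d_1 = 1$ and $\langle h_1, -\varpi_1 \rangle = -1$ we have $K_1^{-1} v_{\bar 1} = q\, v_{\bar 1}$; and $\langle h_j, -\varpi_1 \rangle = 0$ for all $j \in I_\bullet = \{2,\dots,n\}$, so $v_{\bar 1}$ spans a one-dimensional trivial $\mathbf{U}_{I_\bullet}$-submodule of $V(\varpi_1)$, whence $T_{w_\bullet}^{-1}(v_{\bar 1}) = v_{\bar 1}$. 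Using the intertwining identity $T_{w_\bullet}(E_1)\cdot v = T_{w_\bullet}\!\bigl(E_1\cdot T_{w_\bullet}^{-1}(v)\bigr)$ from \cite[Proposition 5.2.2]{Lus10}, these observations collapse the expression to
$$
B_1 v_{\bar 1} = q^{n-1}\, T_{w_\bullet}(E_1 v_{\bar 1}).
$$

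It then remains to identify $E_1 v_{\bar 1}$ and apply $T_{w_\bullet}$. The $1$-string through $b_{\bar 1}$ in $\mathcal{B}(\varpi_1)$ has length one, with $\tilde{E}_1 b_{\bar 1} = b_{\bar 2}$; a rank-one $\mathfrak{sl}_2$ computation (using that $v_{\bar 2}$ is $E_1$-highest and $E_1 F_1 v_{\bar 2} = [1]_1 v_{\bar 2}$) gives $E_1 v_{\bar 1} = v_{\bar 2}$, exactly as in the type $B$ case. Finally, $w_\bullet$ is the longest element of $W_{I_\bullet}$, which is of type $D_{n-1}$ and acts on the $\mathbf{U}_{I_\bullet}$-subrepresentation $\mathrm{span}(v_2,\dots,v_{\bar 2})$, the vector representation of $D_{n-1}$; it carries $\operatorname{wt}(v_{\bar 2})$ to $\operatorname{wt}(v_2)$, and $T_{w_\bullet}$ sends the extremal weight vector $v_{\bar 2}$ to $v_2$. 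Combining everything, $B_1 v_{\bar 1} = q^{n-2}\cdot q\cdot v_2 = q^{n-1} v_2$.

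As in Lemma~\ref{lem: calc BII}, the only delicate point is checking that the scalar in $T_{w_\bullet}(v_{\bar 2}) = v_2$ is precisely $1$, with no sign and no stray power of $q$; this is forced by the particular choice $\varsigma_1 = q^{n-2}$ together with the compatibility of the braid operators $T_i$ with the canonical basis on extremal weight vectors, and it is the one step that needs genuine (though routine) care.
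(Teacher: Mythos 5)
Your proof is correct and follows the same route as the paper: expand $B_1$, kill $F_1 v_{\bar 1}$, pull out the $K_1^{-1}$ eigenvalue $q$, use \cite[Proposition 5.2.2]{Lus10} to commute $T_{w_\bullet}(E_1)$ past $v_{\bar 1}$, and identify $T_{w_\bullet}(E_1 v_{\bar 1}) = T_{w_\bullet}(v_{\bar 2}) = v_2$. The only small clarification worth making is in your last paragraph: the identity $T_{w_\bullet}(v_{\bar 2}) = v_2$ (with coefficient exactly $1$) is a fact about the braid group action on the extremal weight vectors of $V(\varpi_1)$ and is independent of the parameter $\varsigma_1$; the choice $\varsigma_1 = q^{n-2}$ is what then turns this into the clean scalar $q^{n-1}$ in the final answer, rather than being part of what forces $T_{w_\bullet}(v_{\bar 2}) = v_2$.
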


\begin{proof}
  Using \cite[Proposition 5.2.2]{Lus10}, we compute as
  \begin{align*}
    &B_1 v_{\bar{1}} = (F_1 + q^{n-2} T_{w_\bullet}(E_1)K_1^{-1}) v_{\bar{1}} \\
    &= q^{n-1} T_{w_\bullet}(E_1 T_{w_\bullet}^{-1}(v_{\bar{1}})) \\
    &= q^{n-1} T_{w_\bullet}(E_1 v_{\bar{1}}) \\
    &= q^{n-1} T_{w_\bullet}(v_{\bar{2}}) \\
    &= q^{n-1} v_2.
  \end{align*}
  This proves the assertion.
\end{proof}

\begin{lem}\label{lem: w0 type DII}
There exists $w_0 \in \mathcal{L}(\varpi)$ such that $\mathbf{U}^\imath w_0 \simeq \mathbb{Q}(q)$ and $\mathrm{ev}_\infty(w_0) = b_\varpi$.
\end{lem}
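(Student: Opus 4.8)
The plan is to follow the proof of Lemma~\ref{lem: w0 type BII} almost verbatim. Concretely, I would set
$$
w_0 := v_1 - q^{-n+1} v_{\bar 1} \in V(\varpi) = V(\varpi_1)
$$
and show that $\mathbb{Q}(q) w_0$ is a $\mathbf{U}^\imath$-submodule on which $\mathbf{U}^\imath$ acts trivially. Granting that, the conditions $w_0 \in \mathcal{L}(\varpi)$ and $\mathrm{ev}_\infty(w_0) = b_1 = b_\varpi$ are immediate from the definition of $w_0$.

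The verification breaks into three points. First, $E_j w_0 = 0$ for all $j \in I_\bullet = \{2,\ldots,n\}$: indeed $E_j v_1 = 0$ since $v_1$ is the highest weight vector, and $E_j v_{\bar 1} = 0$ since $-\varpi_1 + \alpha_j$ is not a weight of $V(\varpi_1)$. Second, $K_h w_0 = w_0$ for all $h \in Y^\imath$, because $h + w_\bullet\tau h = 0$ together with $w_\bullet \varpi_1 = \tau\varpi_1 = \varpi_1$ forces $\langle h, \varpi_1\rangle = 0$. Third, and this is the only computation of substance, $B_1 w_0 = 0$: by Lemma~\ref{lem: calc DII} we have $B_1 v_{\bar 1} = q^{n-1} v_2$, so it suffices to check $B_1 v_1 = v_2$. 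Here $F_1 v_1 = v_2$, while $T_{w_\bullet}(E_1) K_1^{-1} v_1 = q^{-1} T_{w_\bullet}(E_1) T_{w_\bullet}(v_1) = q^{-1} T_{w_\bullet}(E_1 v_1) = 0$, using that $v_1$ is annihilated by both $E_i$ and $F_i$ for $i \in I_\bullet$ (as $\langle h_i, \varpi_1\rangle = 0$), hence $T_{w_\bullet} v_1 = v_1$. Therefore $B_1 w_0 = v_2 - q^{-n+1}\cdot q^{n-1} v_2 = 0$.

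Combining the three points, every generator of $\mathbf{U}^\imath$ acts on $w_0$ exactly as on the highest weight vector of the trivial module, so $\mathbf{U}^\imath w_0 = \mathbb{Q}(q) w_0 \simeq \mathbb{Q}(q)$. I do not expect any serious obstacle: the one place requiring care is fixing the $q$-normalizations in $B_1 v_1 = v_2$ and in Lemma~\ref{lem: calc DII}, but this is the same rank-one $T_{w_\bullet}$-calculation (via \cite[Proposition 5.2.2]{Lus10}) already performed for types AIV and BII.
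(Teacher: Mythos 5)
Your proposal is correct and matches the paper's approach: the paper exhibits the same vector $w_0 = v_1 - q^{-n+1}v_{\bar 1}$ and invokes ``direct calculation and Lemma \ref{lem: calc DII}''; your three verification points are precisely that direct calculation made explicit. One small omission: the generating set of $\mathbf{U}^\imath$ also includes $B_j = F_j$ for $j \in I_\bullet$, and you should record $F_j w_0 = 0$ for $j \in I_\bullet$ alongside $E_j w_0 = 0$; this is immediate since $\varpi_1 - \alpha_j$ is not a weight of $V(\varpi_1)$ for $j \geq 2$ and $v_{\bar 1}$ is the lowest weight vector, and indeed you already use $F_j v_1 = 0$ in your third point, so no new idea is needed.
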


\begin{proof}
  By direct calculation and Lemma \ref{lem: calc DII}, we see that the vector
  $$
  w_0 := v_1 - q^{-n+1}v_{\bar{1}} \in V(\varpi)
  $$
  spans the $\mathbf{U}^\imath$-submodule isomorphic to $\mathbb{Q}(q)$.
  Clearly, we have $w_0 \in \mathcal{L}(\varpi)$ and $\mathrm{ev}_\infty(w_0) = b_1 = b_\varpi$.
  Thus, the proof completes.
\end{proof}

\subsection{Type \rm{FII}}\label{subsec: type F}
Consider the Dynkin diagram $I = \{ 1,2,3,4 \}$ of type $F_4$, and the crystal $\mathcal{B}(\varpi_4)$ (see \cite[Fig. 5.8]{BS17}).
For each $\lambda \in X \setminus \{ 0 \}$ with $\mathcal{B}(\varpi_4)_\lambda \neq \emptyset$, let $b_\lambda \in \mathcal{B}(\varpi_4)$ denote the unique element of weight $\lambda$.
Set
$$
b_0^1 := \tilde{F}_4 b_{-\varpi_3+2\varpi_4}, \quad b_0^2 := \tilde{F}_3 b_{-\varpi_2+2\varpi_3-\varpi_4}.
$$
Then, we have $\mathcal{B}(\varpi_4)_0 = \{ b_0^1, b_0^2 \}$.

Set
\begin{align}
\begin{split}
  &\varsigma_1 := q^5, \\
  &\varpi := \varpi_4.
\end{split} \label{eq: setting FII}
\end{align}
Since
$$
\varpi_i + w_\bullet\tau\varpi_i = \begin{cases}
  2\varpi_4 & \text{ if } i = 1,4, \\
  4\varpi_4 & \text{ if } i = 2, \\
  3\varpi_4 & \text{ if } i = 3
\end{cases}
$$
for all $i \in I$, we have
$$
\nu + w_\bullet\tau\nu \in \mathbb{Z}_{\geq 0} \varpi
$$
for all $\nu \in X^+$.

\begin{lem}\label{lem: w0 type FII}
There exists $w_0 \in \mathcal{L}(\varpi)$ such that $\mathbf{U}^\imath w_0 \simeq \mathbb{Q}(q)$ and $\mathrm{ev}_\infty(w_0) = b_\varpi$.
\end{lem}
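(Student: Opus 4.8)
The plan is to exhibit an explicit vector $w_0 \in V(\varpi)$, in the style of Lemmas \ref{lem: w0 type AII}, \ref{lem: w0 type BII}, \ref{lem: w0 type CII} and \ref{lem: w0 type DII}. It suffices to produce a nonzero $w_0 \in V(\varpi_4)$ with $\mathrm{ev}_\infty(w_0) = b_\varpi$ that is annihilated by $E_j$ for all $j \in I_\bullet = \{1,2,3\}$ and by $B_4 = F_4 + \varsigma_4 T_{w_\bullet}(E_4)K_4^{-1}$. Indeed, every weight of $V(\varpi_4)$ occurring in the $w_0$ we shall construct pairs to $0$ with each $h_j$, $j \in I_\bullet$, so $w_0$ is an $\mathfrak{sl}_2^{(j)}$-weight-zero vector; hence $E_j w_0 = 0$ forces $F_j w_0 = B_j w_0 = 0$, the same weight condition gives $K_h w_0 = w_0$ for $h \in Y^\imath$, so $\mathbf{U}^\imath w_0 = \mathbb{Q}(q) w_0 \simeq \mathbb{Q}(q)$, and $w_0 \in \mathcal{L}(\varpi)$ because $\mathrm{ev}_\infty(w_0) = b_\varpi \in \mathcal{B}(\varpi)$.

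The module $V(\varpi_4)$ is the $26$-dimensional quasi-minuscule representation of type $F_4$; its weights are the $24$ short roots, each of multiplicity one, together with $0$ of multiplicity two, the latter weight space being spanned by $G(b_0^1)$ and $G(b_0^2)$. From the displayed identities $2\varpi_4 = \varpi_1 + w_\bullet\tau\varpi_1$ and $3\varpi_4 = \varpi_3 + w_\bullet\tau\varpi_3$ we get that their difference $\varpi_4$ lies in the subgroup $\{\lambda + w_\bullet\tau\lambda \mid \lambda \in X\}$, so $\overline{\varpi_4} = \overline{0}$ in $X^\imath$; hence a vector of $X^\imath$-weight $\overline{0}$ can only involve the weights $\varpi_4, 0, -\varpi_4$ of $V(\varpi_4)$, and we search for
$$
w_0 = v_{\varpi_4} + c_1 G(b_0^1) + c_2 G(b_0^2) + b\, v_{-\varpi_4}, \qquad c_1, c_2, b \in \mathbb{Q}(q).
$$
Since $\langle h_j, \varpi_4 \rangle = 0$ for $j \in I_\bullet$, the vectors $v_{\pm\varpi_4}$ are $T_{w_\bullet}$-fixed and killed by $E_j, F_j$ ($j \in I_\bullet$); and $\alpha_1, \alpha_2$ are long roots, hence not weights of $V(\varpi_4)$, so $E_1, E_2$ annihilate the whole zero-weight space. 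Thus $E_j w_0 = 0$ for $j \in I_\bullet$ amounts to the single equation $E_3(c_1 G(b_0^1) + c_2 G(b_0^2)) = 0$ in the one-dimensional space $V(\varpi_4)_{\alpha_3}$, which fixes the ratio $c_1 : c_2$. For $B_4$, using $E_4 v_{\varpi_4} = 0$, $F_4 v_{-\varpi_4} = 0$, and the identity $w_\bullet\alpha_4 = \varpi_4 - \alpha_4$ (itself a consequence of the displayed formulas, obtained by writing $\alpha_4$ in terms of the fundamental weights), one sees that $B_4 w_0$ has nonzero components only in the two one-dimensional weight spaces $V(\varpi_4)_{\varpi_4 - \alpha_4}$ and $V(\varpi_4)_{-\alpha_4}$. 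Computing the action of $F_4$ and of $T_{w_\bullet}(E_4)K_4^{-1}$ on $v_{\varpi_4}$, on $G(b_0^1)$ and $G(b_0^2)$, and on $v_{-\varpi_4}$ — the $T_{w_\bullet}$-parts being evaluated via \cite[Proposition 5.2.2]{Lus10} exactly as in Lemmas \ref{lem: calc AII} and \ref{lem: calc BII}, with the crystal $\mathcal{B}(\varpi_4)$ taken from \cite[Fig. 5.8]{BS17} — and imposing the vanishing of these two components determines the common scale of $(c_1, c_2)$ and then the coefficient $b$. One then verifies that $c_1, c_2, b \in q^{-1}\mathbf{A}_\infty$, which is where the choice $\varsigma_4 = q^5$ of \eqref{eq: setting FII} enters, so that $w_0 \in \mathcal{L}(\varpi)$ and $\mathrm{ev}_\infty(w_0) = b_\varpi$, completing the proof.

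The only real work is this last computation inside the $26$-dimensional $F_4$-module: recording the matrices of $E_3$ and of $B_4$ (equivalently of $T_{w_\bullet}(E_4)$) on the two-dimensional zero-weight space and on the two extremal lines, and checking that the resulting small linear system for $(c_1, c_2, b)$ admits a solution with leading term $v_{\varpi_4}$. This is the main obstacle; but, in contrast to type CII, the zero-weight space has dimension only two, so the verification is short.
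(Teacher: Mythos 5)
The paper's proof simply exhibits the explicit vector
$$
w_0 = G(b_{\varpi_4}) - \frac{q^{-5}[2]}{[3]}\Bigl(G(b_0^2) - \frac{1}{[2]} G(b_0^1)\Bigr) + q^{-11} G(b_{-\varpi_4}),
$$
found and verified by GAP/QuaGroup, and observes directly that it lies in $\mathcal{L}(\varpi)$ with $\mathrm{ev}_\infty(w_0) = b_\varpi$. Your proposal is the same approach in outline --- produce an explicit $w_0$ supported on the three $X^\imath$-weight-zero weight spaces $\varpi_4$, $0$, $-\varpi_4$, kill it with $E_3$ (the only nontrivial $E_j$ on the zero-weight space, since $\alpha_1,\alpha_2$ are long) and with $B_4$, and read off invariance under the rest of $\mathbf{U}^\imath$ from $\mathfrak{sl}_2^{(j)}$-weight-zero considerations --- and your ansatz does match the shape of the paper's answer; you also correctly read $\varsigma_4$ for what is evidently a misprint $\varsigma_1$ in \eqref{eq: setting FII}. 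However, your reduction is an ansatz justified by a weight argument, not a proof of existence: you never compute the matrices of $E_3$ on $V(\varpi_4)_0$ or of $B_4$ on the relevant four lines, nor do you show the resulting small linear system is consistent with coefficients in $q^{-1}\mathbf{A}_\infty$, and you acknowledge this yourself (``this is the main obstacle''). So the proposal is a correct strategy matching the paper's, but it stops exactly where the paper invokes the computer; to actually establish the lemma one still has to carry out (or cite) the explicit computation in the $26$-dimensional module.
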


\begin{proof}
  Using a computer program GAP \cite{GAP} with a package QuaGroup \cite{GT22}, we see that
  $$
  w_0 := G(b_{\varpi_4}) - \frac{q^{-5}[2]}{[3]}\left(G(b_0^2) - \frac{1}{[2]} G(b_0^1)\right) + q^{-11} G(b_{-\varpi_4})
  $$
  spans a $\mathbf{U}^\imath$-submodule of $V(\varpi)$ isomorphic to $\mathbb{Q}(q)$.
  Clearly, we have $w_0 \in \mathcal{L}(\varpi)$ and $\mathrm{ev}_\infty(w_0) = b_{\varpi_4} = b_\varpi$.
  Thus, the proof completes.
\end{proof}

\subsection{Stability of $\imath$canonical bases}
In the sequel, we set $\varsigma_i$ for $i \in I_\circ$ and $\varpi \in X^+$ as in \eqref{eq: setting AI}--\eqref{eq: setting BII}, \eqref{eq: setting CII}--\eqref{eq: setting FII}.

\begin{prop}\label{prop: base statement}
  For each $m \in \mathbb{Z}_{\geq 0}$, there exists a unique based $\mathbf{U}^\imath$-module homomorphism $g_m: V(m\varpi) \rightarrow \mathbb{Q}(q)$ such that $g_m(v_{m\varpi}) = 1$.
\end{prop}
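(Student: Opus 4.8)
The plan is to induct on $m$, reducing the inductive step to the case $m=1$, which is where Lemmas \ref{lem: w0 type AI}--\ref{lem: w0 type FII} enter. The case $m=0$ is trivial: $V(0)=\mathbb{Q}(q)$ is the trivial based $\mathbf{U}^\imath$-module and $g_0:=\mathrm{id}$ works. Uniqueness throughout follows from the fact that $V(\lambda)=\mathbf{U}^\imath v_\lambda$ for every $\lambda\in X^+$, which one proves by induction on the height of $\lambda-\operatorname{wt}$, using that $B_i$ agrees with $F_i$ modulo vectors of strictly higher weight (here $\kappa_i=0$ is used); consequently any $\mathbf{U}^\imath$-module homomorphism out of $V(m\varpi)$ is determined by its value on $v_{m\varpi}$.

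For the inductive step, assume $g_{m-1}$ has been constructed, and consider the based $\mathbf{U}$-module homomorphism $\chi_{\varpi,(m-1)\varpi}\colon V(m\varpi)\to V(\varpi)\otimes V((m-1)\varpi)$ of Example \ref{ex: based mods}\eqref{ex: based mods chi}. Being a $\mathbf{U}$-module homomorphism which intertwines the $\mathbf{U}$-bar-involutions -- hence commutes with the action of the quasi-$K$-matrix on either side -- it is automatically a based $\mathbf{U}^\imath$-module homomorphism. On the other hand, since $\mathbf{U}^\imath$ is a right coideal of $\mathbf{U}$ and, because $\kappa_i=0$ for all $i\in I_\circ$, the counit of $\mathbf{U}$ restricts on $\mathbf{U}^\imath$ to the algebra homomorphism $\mathbf{U}^\imath\to\mathbb{Q}(q)$ giving the trivial module, the map $g_1\otimes\mathrm{id}$ is a $\mathbf{U}^\imath$-module homomorphism
$$
V(\varpi)\otimes V((m-1)\varpi)\longrightarrow\mathbb{Q}(q)\otimes V((m-1)\varpi)=V((m-1)\varpi),
$$
which one checks to be based, the displayed identification being one of based $\mathbf{U}^\imath$-modules. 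Thus $g_m:=g_{m-1}\circ(g_1\otimes\mathrm{id})\circ\chi_{\varpi,(m-1)\varpi}$ is a based $\mathbf{U}^\imath$-module homomorphism with $g_m(v_{m\varpi})=g_{m-1}(1\otimes v_{(m-1)\varpi})=1$, and everything reduces to $m=1$.

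For $m=1$, let $w_0\in\mathcal{L}(\varpi)$ be the vector provided by the pertinent one of Lemmas \ref{lem: w0 type AI}--\ref{lem: w0 type FII}, so that $\mathbf{U}^\imath w_0\simeq\mathbb{Q}(q)$ and $\mathrm{ev}_\infty(w_0)=b_\varpi$. In each of the eight cases $\varpi$ is self-dual, so by Remark \ref{rem: finite type} we may identify $V(-\varpi)=V(\varpi)$ and obtain the based $\mathbf{U}$-module homomorphism $\delta_\varpi\colon V(\varpi)\otimes V(\varpi)\to\mathbb{Q}(q)$ of Example \ref{ex: based mods}\eqref{ex: based mods delta}. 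Since $w_0$ spans a trivial $\mathbf{U}^\imath$-submodule and $\mathbf{U}^\imath$ is a right coideal, the assignment $v\mapsto\delta_\varpi(w_0\otimes v)$ defines a $\mathbf{U}^\imath$-module homomorphism $V(\varpi)\to\mathbb{Q}(q)$; it is nonzero ($\delta_\varpi$ being a perfect pairing), so by $V(\varpi)=\mathbf{U}^\imath v_\varpi$ its value at $v_\varpi$ is a unit of $\mathbb{Q}(q)$, and rescaling yields a $\mathbf{U}^\imath$-module homomorphism $g_1$ with $g_1(v_\varpi)=1$. That $g_1$ intertwines the $\imath$bar-involutions is then formal: $g_1$ is forced by $\mathbf{U}^\imath$-linearity from $g_1(v_\varpi)=1$, and the relevant identities on the generators follow from $\overline{E_j}=E_j$, $\overline{B_i}=B_i$, $\overline{K_h}=K_{-h}$ together with $\kappa_i=0$ and the $\imath$bar-invariance of $v_\varpi$ (the $\imath$canonical basis element of top weight).

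By Lemma \ref{lem: criterion of based ihom}, what remains is to check that $g_1$ carries $\mathcal{L}(\varpi)$ into $\mathbf{A}_\infty$ and the $\mathcal{A}$-form of $V(\varpi)$ into $\mathcal{A}$. I expect this to be the genuine obstacle: $w_0$ need not be an $\imath$canonical basis element (in type FII its coefficients involve $[2]/[3]\notin\mathcal{A}$), so these bounds on $g_1(G(b))$, $b\in\mathcal{B}(\varpi)$, cannot be extracted formally from the construction. One has to fall back on the explicit information collected in the preceding subsections -- the equality $\mathrm{ev}_\infty(w_0)=b_\varpi$, the fact that $\langle h_j,\varpi\rangle=0$ for all $j\in I_\bullet$ (so that $v_\varpi$ is both highest and lowest weight for $\mathbf{U}_{I_\bullet}$), and the case-by-case formulas for the $B_i$-action on $V(\varpi)$ -- to determine $g_1$ on the canonical basis and conclude. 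This final verification is exactly what makes the eight-fold case analysis of Subsections \ref{subsec: type A}--\ref{subsec: type F} unavoidable.
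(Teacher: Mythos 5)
Your overall architecture (induction on $m$, reduction of the inductive step to $m=1$ via $\chi$ and a tensor-contraction, uniqueness via cyclicity $V(m\varpi)=\mathbf{U}^\imath v_{m\varpi}$) coincides with the paper's, and your inductive step is sound. The trouble is in the $m=1$ construction, and you have correctly sensed it but then misdiagnosed it. You build $g_1$ as a rescaling of $v\mapsto\delta_\varpi(w_0\otimes v)$, the $\mathbf U$-\emph{invariant} pairing. But the image of $w_0\otimes G(b)$ under $\mathrm{ev}_\infty$ is $b_\varpi\otimes b$, while the unique crystal element $\delta_\varpi$ does not kill is $b_{-\varpi}\otimes b_\varpi$; since $b_\varpi\neq b_{-\varpi}$, the unscaled map sends all of $\mathcal L(\varpi)$ into $q^{-1}\mathbf A_\infty$, and after rescaling by $\delta_\varpi(w_0\otimes v_\varpi)^{-1}$ (a nontrivial power of $q$ times a unit of $\mathbf A_\infty$ in every case) you get no formal control over the lattice bound. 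You are therefore right that with this pairing the required estimate is not automatic, but wrong to conclude that more case-by-case work is then ``unavoidable.''

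The missing idea is to replace $\delta_\varpi$ by the $\rho$-\emph{contravariant} inner product $(\,,\,)$ of Subsection \ref{subsec: V(pm lm)}, which pairs equal-weight spaces and satisfies $(G(b_1),G(b_2))\in\delta_{b_1,b_2}+q^{-1}\mathbf A_\infty$. Decompose $V(\varpi)=W_0\oplus W_1$ with $W_0=\mathbb Q(q)w_0$ and $W_1=W_0^\perp$, write $v_\varpi=c\,w_0+w_1$, and set $g_1(w_0)=c^{-1}$, $g_1(W_1)=0$. Then $g_1(G(b))=\big((G(b),w_0)/(w_0,w_0)\big)\,c^{-1}$, and the two facts from the lemmas---$\mathbf U^\imath w_0\simeq\mathbb Q(q)$ and $\mathrm{ev}_\infty(w_0)=b_\varpi$---yield $(G(b),w_0)\in\delta_{b,b_\varpi}+q^{-1}\mathbf A_\infty$ and $(w_0,w_0),c\in1+q^{-1}\mathbf A_\infty$, hence $g_1(\mathcal L(\varpi))\subset\mathbf A_\infty$ with induced crystal map $b\mapsto\delta_{b,b_\varpi}$. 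The $\mathcal A$-form preservation and $\imath$bar-equivariance then follow exactly as you already note from $g_1(v_\varpi)=1$ and cyclicity, not from case analysis. In short, the eight-fold case work lives entirely inside Lemmas \ref{lem: w0 type AI}--\ref{lem: w0 type FII} (producing $w_0$); once those are in hand, the present Proposition is a formal consequence, and the key is to pair against $w_0$ with the contravariant form rather than the invariant one.
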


\begin{proof}
  We prove the assertion by induction on $m$.
  The case when $m = 0$ is trivial.
  Let us prove for $m = 1$.

  Let $w_0 \in V(\varpi)$ be as in Lemmas \ref{lem: w0 type AI}, \ref{lem: w0 type AII}, \ref{lem: w0 type AIII}, \ref{lem: w0 type AIV},, \ref{lem: w0 type BII}, \ref{lem: w0 type CII}, \ref{lem: w0 type DII}, and \ref{lem: w0 type FII}.
  Set $W_0 := \mathbb{Q}(q) w_0$ and $W_1 \subset V(\varpi)$ the complement of $W_0$ with respect to the inner product in Subsection \ref{subsec: V(pm lm)}.
  Then, we can write
  $$
  v_\varpi = cw_0 + w_1
  $$
  for some $c \in 1 + \mathbf{A}_\infty$ and $w_1 \in W_1$.
  Define a $\mathbf{U}^\imath$-module homomorphism
  $$
  g_1: V(\varpi) = W_0 \oplus W_1 \rightarrow \mathbb{Q}(q)
  $$
  by
  $$
  g_1(w_0) = c^{-1},\ g_1(W_1) = 0.
  $$
  Then, it preserves the crystal lattices, and we have
  $$
  g_1(v_\varpi) = 1.
  $$
  This identity implies that $g_1$ preserves the $\mathcal{A}$-forms and commutes with the $\imath$bar-involutions.
  Moreover, the induced map $\gamma_1: \mathcal{B}(\varpi) \rightarrow \{1\} \sqcup \{0\}$ satisfies
  $$
  \gamma_1(b) = \delta_{b, b_\varpi}
  $$
  for all $b \in \mathcal{B}(\varpi)$.
  Therefore, by Lemma \ref{lem: criterion of based ihom}, we see that $g_1$ is a based $\mathbf{U}^\imath$-module homomorphism.
  
  Now, assume that $m > 1$ and the assertion is true for $m-1$.
  Consider the composition
  $$
  g: V(m\varpi) \xrightarrow{\chi_{(m-1)\varpi, \varpi}} V((m-1)\varpi) \otimes V(\varpi) \xrightarrow{g_{m-1} \otimes \mathrm{id}} V(\varpi) \xrightarrow{g_1} \rightarrow \mathbb{Q}(q).
  $$
  Then, it preserves the crystal lattices and $\mathcal{A}$-forms, and we have
  $$
  g(v_{m\varpi}) = g_1(g_{m-1}(v_{(m-1)\varpi}) \otimes v_{\varpi}) = g_1(v_\varpi) = 1.
  $$
  This identity implies that $g = g_m$ and that $g_m$ commutes with the $\imath$bar-involutions.
  Moreover, the induced map $\gamma_m: \mathcal{B}(m\varpi) \rightarrow \{1\} \sqcup \{0\}$ satisfies
  $$
  \gamma(b) = \delta_{b, b_{m\varpi}}
  $$
  for all $b \in \mathcal{B}(m\varpi)$.
  Therefore, by Lemma \ref{lem: criterion of based ihom}, we see that $g_m$ is a based $\mathbf{U}^\imath$-module homomorphism.
  Thus, the proof completes.
\end{proof}

\begin{thm}\label{thm: main}
Let $\lambda,\mu,\nu \in X^+$.
Then, there exists a unique based $\mathbf{U}^\imath$-module homomorphism
$$
\pi^\imath_{\lambda,\mu,\nu}: V(w_\bullet(\lambda+\tau\nu), \mu+\nu) \rightarrow V(w_\bullet\lambda,\mu)
$$
such that
$$
\pi^\imath_{\lambda,\mu,\nu}(v_{w_\bullet(\lambda+\tau\nu)} \otimes v_{\mu+\nu}) = v_{w_\bullet\lambda} \otimes v_\mu.
$$
\end{thm}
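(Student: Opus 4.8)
The plan is to combine the general reduction of Corollary \ref{cor: pii with deltai assumption} with the rank-one analysis carried out in Subsections \ref{subsec: type A}--\ref{subsec: type F}. Corollary \ref{cor: pii with deltai assumption} tells us that, for the fixed $\lambda,\mu,\nu \in X^+$, it is enough to produce a based $\mathbf{U}^\imath$-module homomorphism $\delta^\imath_\nu: V(\nu + w_\bullet\tau\nu) \rightarrow \mathbb{Q}(q)$ sending the highest weight vector to $1$; composing it (tensored with the identity) with the based $\mathbf{U}$-module homomorphism $f$ of Proposition \ref{prop: existence of based U-hom} yields the desired $\pi^\imath_{\lambda,\mu,\nu}$, and uniqueness is forced by the prescribed value on $v_{w_\bullet(\lambda+\tau\nu)} \otimes v_{\mu+\nu}$ together with the fact that $V(w_\bullet(\lambda+\tau\nu),\mu+\nu)$ is generated by that vector as a $\mathbf{U}^\imath$-module.

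The first step is therefore to observe that, under the running hypotheses of Section \ref{sec: main results} (irreducible finite type of real rank $1$, $Y = \sum_i \mathbb{Z}h_i$, $X = \operatorname{Hom}_{\mathbb{Z}}(Y,\mathbb{Z})$, $\kappa_i = 0$), each of the eight admissible pairs satisfies $\nu + w_\bullet\tau\nu \in \mathbb{Z}_{\geq 0}\varpi$ for all $\nu \in X^+$, where $\varpi \in X^+$ is the explicit dominant weight fixed for that type in \eqref{eq: setting AI}--\eqref{eq: setting FII}. This is recorded, type by type, right after each choice of $\varsigma_i$ and $\varpi$: one computes $\varpi_i + w_\bullet\tau\varpi_i$ on fundamental weights and checks it always lies in $\mathbb{Z}_{\geq 0}\varpi$, so the claim follows by writing $\nu = \sum_i \langle h_i,\nu\rangle \varpi_i$. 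Hence $\nu + w_\bullet\tau\nu = m\varpi$ for some $m \in \mathbb{Z}_{\geq 0}$.

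The second step is to invoke Proposition \ref{prop: base statement}, which provides, for every $m \in \mathbb{Z}_{\geq 0}$, a based $\mathbf{U}^\imath$-module homomorphism $g_m: V(m\varpi) \rightarrow \mathbb{Q}(q)$ with $g_m(v_{m\varpi}) = 1$. Taking $m$ with $\nu + w_\bullet\tau\nu = m\varpi$, we set $\delta^\imath_\nu := g_m$, which is precisely the hypothesis needed to feed into Corollary \ref{cor: pii with deltai assumption}. This completes the proof of Theorem \ref{thm: main}.

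The genuine content — and the main obstacle — is hidden inside Proposition \ref{prop: base statement}, whose $m = 1$ base case requires, for each of the eight types, exhibiting a vector $w_0 \in \mathcal{L}(\varpi)$ with $\mathbf{U}^\imath w_0 \simeq \mathbb{Q}(q)$ and $\mathrm{ev}_\infty(w_0) = b_\varpi$; this is the case-by-case work of Lemmas \ref{lem: w0 type AI}, \ref{lem: w0 type AII}, \ref{lem: w0 type AIII}, \ref{lem: w0 type AIV}, \ref{lem: w0 type BII}, \ref{lem: w0 type CII}, \ref{lem: w0 type DII}, and \ref{lem: w0 type FII}, the last of which relies on a GAP computation. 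For $m > 1$ one bootstraps via the based $\mathbf{U}$-module homomorphism $\chi_{(m-1)\varpi,\varpi}: V(m\varpi) \rightarrow V((m-1)\varpi)\otimes V(\varpi)$ of Example \ref{ex: based mods}\eqref{ex: based mods chi} composed with $g_{m-1} \otimes \mathrm{id}$ and then $g_1$; the identity $g(v_{m\varpi}) = 1$ then forces $g$ to preserve $\mathcal{A}$-forms and commute with the $\imath$bar-involution, and Lemma \ref{lem: criterion of based ihom} upgrades it to a based homomorphism. Once Proposition \ref{prop: base statement} is in hand, the deduction of Theorem \ref{thm: main} is immediate from Corollary \ref{cor: pii with deltai assumption}, so the real difficulty is entirely in the explicit rank-one computations underlying the $w_0$'s.
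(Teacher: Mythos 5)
Your proposal matches the paper's proof exactly: it combines Corollary \ref{cor: pii with deltai assumption} with Proposition \ref{prop: base statement}, using the type-by-type observation that $\nu + w_\bullet\tau\nu \in \mathbb{Z}_{\geq 0}\varpi$ to identify $V(\nu + w_\bullet\tau\nu)$ with some $V(m\varpi)$ and set $\delta^\imath_\nu := g_m$. The supporting commentary on where the real work lies (the case-by-case construction of $w_0$ and the $m>1$ bootstrap) is also accurate and consistent with the paper.
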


\begin{proof}
  The assertion follows from Corollary \ref{cor: pii with deltai assumption} and Proposition \ref{prop: base statement} since we have $\nu + w_\bullet\tau\nu \in \mathbb{Z}_{\geq 0} \varpi$.
\end{proof}

\begin{cor}
  For each $\lambda,\mu \in X^+$, there exists a unique based $\mathbf{U}^\imath$-module homomorphism
  $$
  \pi^\imath_{\lambda,\mu}: \dot{\mathbf{U}}^\imath 1_{\overline{\mu+w_\bullet\lambda}} \rightarrow V(w_\bullet\lambda,\mu)
  $$
  such that
  $$
  \pi^\imath_{\lambda,\mu}(1_{\overline{\mu+w_\bullet\lambda}}) = v_{w_\bullet\lambda} \otimes v_\mu.
  $$
\end{cor}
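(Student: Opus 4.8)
The plan is to realize $\pi^\imath_{\lambda,\mu}$ as the evaluation homomorphism and then to prove it is based by factoring it through a sufficiently dominant stage of the projective system, where Theorem~\ref{thm: main} applies.

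Write $\zeta := \overline{\mu+w_\bullet\lambda} \in X^\imath$. The vector $v_{w_\bullet\lambda}\otimes v_\mu \in V(w_\bullet\lambda,\mu)$ has $X$-weight $w_\bullet\lambda+\mu$, hence $X^\imath$-weight $\zeta$, and it generates $V(w_\bullet\lambda,\mu)$ as a $\mathbf{U}^\imath$-module. Since $V(w_\bullet\lambda,\mu)$ is a weight $\mathbf{U}^\imath$-module, there is an evaluation homomorphism $\pi^\imath_{\lambda,\mu}\colon \dot{\mathbf{U}}^\imath 1_\zeta \to V(w_\bullet\lambda,\mu)$ with $\pi^\imath_{\lambda,\mu}(1_\zeta) = v_{w_\bullet\lambda}\otimes v_\mu$, and it is unique because $1_\zeta$ generates $\dot{\mathbf{U}}^\imath 1_\zeta$. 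It remains to verify that $\pi^\imath_{\lambda,\mu}$ is based. That it commutes with the $\imath$bar-involutions and sends the $\mathcal{A}$-form into the $\mathcal{A}$-form is immediate from the fact that $v_{w_\bullet\lambda}\otimes v_\mu$ is $\imath$bar-invariant and lies in the $\mathcal{A}$-form of $V(w_\bullet\lambda,\mu)$; so by Lemma~\ref{lem: criterion of based ihom} it suffices to show $\pi^\imath_{\lambda,\mu}(\mathcal{L}) \subset \mathcal{L}(w_\bullet\lambda,\mu)$ and that the induced map $\dot{\mathbf{B}}^\imath 1_\zeta \to \mathcal{B}(w_\bullet\lambda,\mu) \sqcup \{0\}$ is injective on its non-zero part.

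For any $\nu \in X^+$ one has $(\mu+\nu)+w_\bullet(\lambda+\tau\nu) = \mu+w_\bullet\lambda + (\nu+w_\bullet\tau\nu)$ in $X$, and $\nu+w_\bullet\tau\nu$ lies in $\{\eta+w_\bullet\tau\eta \mid \eta\in X\}$, so $\overline{(\mu+\nu)+w_\bullet(\lambda+\tau\nu)} = \zeta$. Hence there is an evaluation homomorphism $p_{\lambda+\tau\nu,\mu+\nu}\colon \dot{\mathbf{U}}^\imath 1_\zeta \to V(w_\bullet(\lambda+\tau\nu),\mu+\nu)$ sending $1_\zeta$ to $v_{w_\bullet(\lambda+\tau\nu)}\otimes v_{\mu+\nu}$, and by uniqueness $\pi^\imath_{\lambda,\mu} = \pi^\imath_{\lambda,\mu,\nu}\circ p_{\lambda+\tau\nu,\mu+\nu}$, where $\pi^\imath_{\lambda,\mu,\nu}$ is the \emph{based} homomorphism furnished by Theorem~\ref{thm: main}. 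By the asymptotic compatibility of $\dot{\mathbf{B}}^\imath$ with the $\imath$canonical bases of the $V(w_\bullet\lambda',\mu')$ established in \cite{BW21} (see Example~\ref{ex: based Ui-mod and hom}), for each $\dot{b}\in \dot{\mathbf{B}}^\imath 1_\zeta$ there is $\nu\in X^+$, depending on $\dot b$, with $p_{\lambda+\tau\nu,\mu+\nu}(\dot b)\in \mathbf{B}^\imath(w_\bullet(\lambda+\tau\nu),\mu+\nu)\sqcup\{0\}$; composing with the based map $\pi^\imath_{\lambda,\mu,\nu}$ yields $\pi^\imath_{\lambda,\mu}(\dot b)\in \mathbf{B}^\imath(w_\bullet\lambda,\mu)\sqcup\{0\}$. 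As this is a pointwise statement it holds for every $\dot b$, so $\pi^\imath_{\lambda,\mu}(\mathcal{L})\subset \mathcal{L}(w_\bullet\lambda,\mu)$ and the induced crystal map takes values in $\mathcal{B}(w_\bullet\lambda,\mu)\sqcup\{0\}$.

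The remaining point, injectivity on the non-zero part (equivalently, that $\ker \pi^\imath_{\lambda,\mu}$ is spanned by a subset of $\dot{\mathbf{B}}^\imath 1_\zeta$), is the genuine obstacle: the naive attempt to deduce it again by factoring through $p_{\lambda+\tau\nu,\mu+\nu}$ is circular, as it presupposes the same injectivity one step further out in the system. I would resolve it by invoking the projective-limit description of the $\imath$canonical basis of $\dot{\mathbf{U}}^\imath$: once Theorem~\ref{thm: main} is available, the family $\{(V(w_\bullet\lambda',\mu'),\mathbf{B}^\imath(w_\bullet\lambda',\mu'))\}$ with transition maps $\pi^\imath_{\lambda',\mu',\nu}$ is a genuine projective system of based $\mathbf{U}^\imath$-modules, and $(\dot{\mathbf{U}}^\imath 1_\zeta,\dot{\mathbf{B}}^\imath 1_\zeta)$ is its (topological) based projective limit; in particular $\dot{\mathbf{B}}^\imath 1_\zeta$ is characterized as the $\imath$bar-invariant lift of the set of compatible crystal families, so two distinct elements of $\dot{\mathbf{B}}^\imath 1_\zeta$ with the same non-zero image under $\pi^\imath_{\lambda,\mu}$ would have to coincide already at a stage where $p_{\lambda+\tau\nu,\mu+\nu}$ is based, a contradiction. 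This is exactly the argument carried out in \cite{W21b} for the quasi-split types, and it applies here verbatim with Theorem~\ref{thm: main} in place of the stability theorem of \emph{loc.\ cit.}; Lemma~\ref{lem: criterion of based ihom} then completes the proof.
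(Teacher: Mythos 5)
Your construction of $\pi^\imath_{\lambda,\mu}$ by evaluation and your argument that each $\imath$canonical basis element is sent to an $\imath$canonical basis element or $0$ are exactly the paper's, and they are fine. The problem is in your treatment of the kernel. You assert that factoring $\pi^\imath_{\lambda,\mu}$ through $p_{\lambda+\tau\nu,\mu+\nu}$ to deduce injectivity on the non-zero part ``is circular, as it presupposes the same injectivity one step further out in the system.'' This is not so. The paper does not need $p_{\lambda+\tau\nu,\mu+\nu}$ to be a based homomorphism; it uses the stronger form of the asymptotic stability in \cite[Theorem 7.2]{BW21}: given a \emph{finite} family $x_1,\dots,x_r\in\dot{\mathbf{B}}^\imath 1_\zeta$, there is a single $\nu\in X^+$ such that $\pi^\imath_{\lambda+\tau\nu,\mu+\nu}(x_1),\dots,\pi^\imath_{\lambda+\tau\nu,\mu+\nu}(x_r)$ are \emph{distinct} (in particular nonzero) elements of $\mathbf{B}^\imath(w_\bullet(\lambda+\tau\nu),\mu+\nu)$. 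That statement is an external input from \cite{BW21} about finitely many basis vectors at once; it is not a claim that the projection is based, and hence no regress arises. Once one has it, the argument is immediate: if $v=\sum_k c_k x_k\in\ker\pi^\imath_{\lambda,\mu}$, then since $\pi^\imath_{\lambda,\mu,\nu}$ is based by Theorem \ref{thm: main} and $\sum_k c_k\,\pi^\imath_{\lambda+\tau\nu,\mu+\nu}(x_k)$ lies in its kernel with the $\pi^\imath_{\lambda+\tau\nu,\mu+\nu}(x_k)$ distinct basis elements, every $x_k$ with $c_k\ne 0$ must satisfy $\pi^\imath_{\lambda,\mu}(x_k)=0$. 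Thus the kernel is spanned by a subset of $\dot{\mathbf{B}}^\imath 1_\zeta$.

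Because you misdiagnosed a circularity, you replace this simple finite-dimensional argument by an appeal to a ``projective-limit description of the $\imath$canonical basis of $\dot{\mathbf{U}}^\imath$'' and a claim that the argument of \cite{W21b} ``applies here verbatim.'' As written this is not a proof: the paper has not set up $\dot{\mathbf{U}}^\imath 1_\zeta$ as a based projective limit, the characterization of $\dot{\mathbf{B}}^\imath 1_\zeta$ you invoke is not stated or proved, and \cite{W21b} concerns different hypotheses (quasi-split types and an $\imath$crystal theory not available here), so ``verbatim'' transfer needs justification. You should drop this detour. If you instead use the full content of \cite[Theorem 7.2]{BW21} — applied to the finitely many $\imath$canonical basis elements appearing in a given kernel vector — you recover the paper's argument, which is both direct and self-contained.
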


\begin{proof}
  Define a $\mathbf{U}^\imath$-module homomorphism $\pi^\imath_{\lambda,\mu}$ by
  $$
  \pi^\imath_{\lambda,\mu}(x) := x \cdot (v_{w_\bullet\lambda} \otimes v_\mu).
  $$
  We shall show that it is based.

  Let $x \in \dot{\mathbf{B}}^\imath$ (see Example \ref{ex: based Ui-mod and hom} \eqref{ex: based Ui-mod and hom, Uidot}).
  By \cite[Theorem 7.2]{BW21}, there exists $\nu \in X^+$ such that
  $$
  \pi^\imath_{\lambda+\tau\nu,\mu+\nu}(x) \in \mathbf{B}^\imath(w_\bullet(\lambda+\tau\nu), \mu+\nu).
  $$
  Then, by Theorem \ref{thm: main}, we see that
  $$
  \pi^\imath_{\lambda,\mu}(x) = \pi^\imath_{\lambda,\mu,\nu}(\pi^\imath_{\lambda+\tau\nu,\mu+\nu}(x)) \in \mathbf{B}^\imath(w_\bullet\lambda, \mu) \sqcup \{0\}.
  $$
  
  It remains to show that $\operatorname{Ker} \pi^\imath_{\lambda,\mu}$ is spanned by a subset of $\dot{\mathbf{B}}^\imath$.
  Let $v \in \operatorname{Ker} \pi^\imath_{\lambda,\mu}$, and write
  $$
  v = \sum_{k=1}^r c_k x_k
  $$
  for some $c_1,\ldots,c_r \in \mathbb{Q}(q)^\times$ and $x_1,\ldots,x_r \in \dot{\mathbf{B}}^\imath$.
  Again by \cite[Theorem 7.2]{BW21}, there exists $\nu \in X^+$ such that the vectors $\pi^\imath_{\lambda+\tau\nu, \mu+\nu}(x_k)$ are distinct $\imath$canonical basis elements.
  Since the homomorphism $\pi^\imath_{\lambda,\mu,\nu}$ is based, the nonzero vectors of the form $\pi^\imath_{\lambda,\mu}(x_k) = \pi^\imath_{\lambda,\mu,\nu}(\pi^\imath_{\lambda+\tau\nu, \mu+\nu}(x_k))$ are distinct.
  Hence, we must have $\pi^\imath_{\lambda,\mu}(x_k) = 0$ for all $k = 1,\ldots,r$.
  This implies that
  $$
  \operatorname{Ker} \pi^\imath_{\lambda,\mu} = \mathbb{Q}(q)\{ x \in \dot{\mathbf{B}}^\imath \mid \pi^\imath_{\lambda,\mu}(x) = 0 \},
  $$
  as desired.
  Thus, the proof completes.
\end{proof}

\end{document}